\newcommand{\be}{\begin{eqnarray*}}
	\newcommand{\en}{\end{eqnarray*}}
\newcommand{\bes}{\begin{eqnarray}}
\newcommand{\ens}{\end{eqnarray}}
\newcommand{\lf}{\left}
\newcommand{\rt}{\right}
\newcommand {\N} {\bf N}
\def\nn{\nonumber}
\newcommand{\al}{\alpha}
\newcommand{\la}{\lambda}
\newtheorem{theorem}{Theorem}[section]
\newtheorem{definition}{Definition}[section]
\newtheorem{lemma}{Lemma}[section]
\newtheorem{remark}{Remark}[section]
\def\bq{\begin{equation}}
\def\eq{\end{equation}}
\def\bqq{\begin{eqnarray*}}
	\def\eqq{\end{eqnarray*}}
\def\nn{\nonumber}
\title[ 	Convergence rates in expectation  for  a  nonlinear backward parabolic equation  ]{	Convergence rates in expectation  for  a  nonlinear backward parabolic equation  with Gaussian white noise   }
\author[E. Nane]{ Erkan Nane}
\address[E. Nane]{Department of Mathematics and Statistics, Auburn University, Auburn, USA}%
\email{	ezn0001@auburn.edu }
\author[N.H. Tuan]{Nguyen Huy Tuan}
\address[N.H. Tuan]{ Applied Analysis Research Group
	Faculty of Mathematics and Statistics
	Ton Duc Thang University
	Ho Chi Minh City, Viet Nam  }
\email{nguyenhuytuan@tdt.edu.vn}
\begin{document}
	
	\begin{abstract}
		The main purpose of this paper is to study the problem of determining  initial condition of nonlinear parabolic equation from noisy observations of the final condition. We introduce  a  regularized method to establish an approximate solution. We prove an upper  bound on the rate
		of convergence of the mean integrated squared error.
		
	\end{abstract}
	
	\maketitle
	\noindent{\it Keywords:}
	Quasi-reversibility method; backward problem; parabolic equation; Gaussian white noise
	regularization.
	\tableofcontents
 \section{Introduction}
	{
		The forward problem for parabolic equation is of finding the distribution at a later time when we know the initial distribution.  In geophysical exploration, one is often faced
		with the problem of determining the temperature distribution in the object  or any
		part of the Earth at a time $t_0 > 0 $ from temperature measurements at a time $t1 > t_0.$
		This is the backward in time parabolic  problem. The backward parabolic problems can be applied to several practical areas such as image processing, mathematical
		finance, and physics (See \cite{1,2}.) }
Let $T$ be a positive number and $\Omega$ be an open, bounded and connected domain in $\mathbb{R}^d, d \ge 1$ with a smooth boundary $\partial \Omega$.  In this paper, we consider the question of finding the
function $\mathbf u(x,t)$, $(x,t)\in \Omega \times [0,T]$, satisfying the nonlinear problem
\bq
\left\{ \begin{gathered}
	\mathbf u_t-\nabla\Big(a(x,t)\nabla \mathbf u\Big)=F(x,t,\mathbf u(x,t)),\quad (x,t) \in \Omega \times (0,T), \hfill \\
	\mathbf u|_{\partial \Omega}=0,\quad  t \in (0,T),\hfill\\
	\mathbf u(x,T)=g(x),\quad  (x,t) \in \Omega \times (0,T),\hfill  \label{parabolicproblemwhitenoise}
\end{gathered}  \right.
\eq
where the functions $a(x,t), g(x)$ are given and  the source function $F$ will be given later. Here the coefficient $a(x,t)$ is a $C^1$ smooth function and $ 0 < \overline m \le a(x,t) < M $ for  all $(x,t) \in \Omega \times (0,T)$ for some finite constants $\overline m, \ M$.  The problem is well-known to be ill-posed in the sense of Hadamard.   Hence, a solution corresponding to the data
	does not always exist, and in the case of existence, it  does not depend continuously
	on the given data.  In fact, from small noise contaminated physical measurements, the corresponding
	solutions will have large errors.  Hence, one has to resort to a regularization.  In the simple  case of the  deterministic noise, Problem \eqref {parabolicproblemwhitenoise} with $a=1$ and $F=0$ has been studied by many authors { \cite{Denche,Duc,Lesnic}}. However, in the case of random  noise, the analysis  of regularization methods is still limited.
	The problem is to determine the initial temperature function $f$ given a noisy
	version of the temperature distribution $g$ at time $T$
	\begin{equation}
	g_\delta^{\text{obs}}(x)= g(x)+\delta \xi(x)  \label{obs11111}
	\end{equation}
	where
	$\delta >0$
	is the amplitude of the noise	and $\xi$
	is a Gaussian white noise.
	In practice, we only observe some finite errors as follows
	\begin{equation}
	\left \langle	g_\delta , \phi_j \right \rangle = \left \langle	g , \phi_j \right \rangle  +\delta \left \langle	\xi , \phi_j \right \rangle ,\quad j=\overline {1,{\bf N}}=1,2,3,\cdots , {\bf N}.  \label{obs222222}
	\end{equation}
	where the natural number ${\bf N} $  is the number of  steps of  discrete observations {and $\phi_j$ is defined in \eqref{eigenvalue}}.  The main goal is to find approximate solution $\widehat {\bf u}_{N}(0) $ for ${\bf u}(0)$ and then investigate the   rate of convergence 	$ {\bf E}	\| \widehat {\bf u}_{N}(0)   - {\bf u}(0)\|$, which is called the mean integrated square error (MISE). Here $ {\bf E}$ denotes the expectation w.r.t. the distribution of the data in the model \eqref{obs11111}.
		The model \eqref{obs11111}-\eqref{obs222222} are considered in some recent paper, such as \cite{Kekkonen2014,Hohage, Hohage2,Mai, Mair}.

The inverse problem with random noise has a long history.
The
 simple case of \eqref {parabolicproblemwhitenoise}   is the homogeneous linear  parabolic equation of finding the initial data $u_0:=u(x,0)$ that satisfies
\bq
\left\{ \begin{gathered}
\mathbf u_t-\Delta u=0,\quad (x,t) \in \Omega \times (0,T), \hfill \\
\mathbf u|_{\partial \Omega}=0,\quad  t \in (0,T),\hfill\\
\mathbf u(x,T)=g(x),\quad  (x,t) \in \Omega \times (0,T).\hfill  \label{parabolicproblemwhitenoise3}
\end{gathered}  \right.
\eq
This equation  is  a special form of statistical inverse problems and it can be transformed  by a linear operator with random noise
\begin{equation}
g=K u_0 + \text{"noise"}.  \label{K}
\end{equation}
where $K$ is a bounded linear operator that  does not have a continuous inverse.
The  problem \eqref{parabolicproblemwhitenoise3}  has been studied by
 well-known methods  including spectral cut-off (or called truncation method)  \cite{Bi2,Cavalier,Mair,Hohage},  the Tiknonov method \cite{Cox}, iterative regularization methods \cite{Engl}, Bayes estimation method \cite{Bo,Knap}, Lavrentiev regularization method \cite{plato}. In some parts  of these works, the authors show that the error $ {\bf E}	\| \widehat {\bf u}_{N}(0)   - {\bf u}(0)\|$ tend to zero when ${\bf N} $ is suitably chosen according to the value of  $\delta$ and $\delta \to 0$. For more details, we refer the reader  to   \cite{Cavalier1}. \\

To the  best of our  knowledge,  there are no results for  the backward problem for nonlinear parabolic  equation with Gaussian white noise.  The difficulty  to study the nonlinear model is the fact that    we can not transform the  solution of \eqref{parabolicproblemwhitenoise}  into the operator equation  \eqref{K}. This makes the study for nonlinear problem with random noise  more difficult since we can not apply the  known methods.  Very recently, in \cite{kirane-nane-tuan-1}, we studied the discrete random model for backward nonlinear parabolic problem. However, the problem considered in \cite{kirane-nane-tuan-1}  is  in a rectangular  domain which is limited in practice. The present paper uses another random model and also  gives approximation of the solution in the case of  more general  bounded and smooth domain $\Omega$.  Our task in this paper is to show that the  expectation between the  solution and the approximate solution converges  to zero when ${\bf N}$ tends to infinity.

This paper  is organized as follows. In section \ref{section2}, we give a couple of preliminary results. In section \ref{section3}, we give an explanation for ill-posedness of the problem.  For ease of the reader,  we  divide the problem into three cases under various assumptions on  the coefficient $a$, and the source function $F$. \\
{\bf Case 1:} $a:=a(x,t)$ is a constant  and $F$ is a globally Lipschitz function. In section \ref{section4}, we will study this case and give convergence rates in $L^2$ and $H^p$ norms for $p>0$.  The method here is the well-known spectral method. The main idea is  to approximate the final data $g$ by the  approximate data  and use this function to establish a regularized problem by truncation method.

{\bf Case 2:} $a:=a(x,t)$ depends on $x$ and $t$ and $F$ is locally Lipschitz function. This  problem is more difficult. In most practical problems, the function $F$  is often a locally Lipschitz function. The difficulty  here is the fact that   the solution cannot  be transformed into a Fourier series and therefore,  we can not apply  well-known  methods to find an  approximate solution. In Section \ref{section5}, we  will study   a new form  of quasi-reversibility method to construct a regularized solution and obtain convergence rate. Our method is new and very different than the method of Lions and Lattes {\color{red} \cite{lattes}}. First, we approximate the locally Lipschitz function by a sequence of  globally Lipschitz functions and use some new techniques to obtain the convergence rate. \\
 {\bf Case 3} {Various  assumptions on $F$. In practice there are many functions that are not locally Lipschitz. Hence our analysis in section \ref{section4} can not applied in section \ref{section6}.
Our method in  section \ref{section6} is also quasi-reversibility  method and is  very  similar to  the method in section \ref{section4}. But  in section \ref{section6},  we don't approximate $F$ as  we do in  section \ref{section4}. This leads to a convergence rate   that is better than  the one in section \ref{section4}. One difficulty that  occurs in  this section is  showing  the existence and uniqueness of the regularized solution. To prove the existence  of the regularized solution, we don't follow the  previously mentioned  methods. Instead, we use the Faedo -- Galerkin method,  and the compactness method introduced by Lions \cite{Lions}.  To the best of our knowledge, this   is the first result   where    $F$ is not necessarily a locally Lipschitz function. Finally, in section \ref{section7}, we give some specific equations  which can be applied by  our method.
}

\section{Preliminaries}\label{section2}
	To give some details on this random model \eqref{obs11111}, we give the  following definitions	(See \cite{Cavalier1, Cavalier}):
	\begin{definition}
		Let $\mathcal{H}$ be a Hilbert space. Let $g, g_\delta \in \mathcal{H} $  satisfy \eqref{obs11111}. The representation \eqref{obs11111} is equivalent to
		\begin{equation}
		\left \langle	g_\delta , \chi \right \rangle = \left \langle	g , \chi \right \rangle  +\delta  \left \langle	\xi , \chi \right \rangle ,\quad \forall \chi \in \mathcal{H}.  \label{obs2}
		\end{equation}
		Here $\left \langle	\xi , \chi \right \rangle   \sim  \mathcal{N} (0, \|\chi\|^2_\mathcal{H})$. Moreover, given $\chi_1, \chi_2 \in H$ then
		\begin{equation}
		\mathbb{E} \Big( \left \langle	\xi , \chi_1 \right \rangle \left \langle	\xi , \chi_2 \right \rangle \Big)=    \mathbb{E}  \left \langle	\chi_1 , \chi_2 \right \rangle.
		\end{equation}
		
	\end{definition}
	
	\begin{definition}
		The stochastic error is a Hilbert-space process, i.e. a bounded linear operator
		$\xi : \mathcal{H}  \to  L^2(\Omega
		,\mathcal{A}, P)  $ where $(\Omega
		,\mathcal{A}, P)$ is the underlying probability space and $L^2(.,.)$ is the space of all square integrable measurable functions.
	\end{definition}
	
	Let us recall that the eigenvalue problem
	\begin{equation}\quad\quad\begin{cases}
	-\Delta \phi_{j}(x) =\lambda_j \phi_{j}(x), \quad  x \in \Omega, &\\
	\phi_{j}(x)=0, \quad x\in \partial \Omega,&
	\end{cases}
	\end{equation} \label{eigenvalue}
	admits a family of  eigenvalues
	$
	0 < \lambda_1 \le \lambda_2 \le \lambda_3 \le ...\le \lambda_j\le ...\nonumber
	$
	and  $\lambda_j  \to \infty $ as $j  \to \infty $. See page 335 in \cite{evan}.
	
	Next, we introduce the abstract Gevrey class of functions of index $\sigma>0$, see, \emph{e.g.}, \cite{cao}, defined by
	\begin{align*}
	\mathcal{W}_{\sigma}=\Bigg\{v \in L^{2}\left(\Omega\right):\sum_{j=1}^{\infty}e^{2\sigma\lambda_{j}}\left|\big\langle v,\phi_{j}(x)\big\rangle_{L^2(\Omega)} \right|^{2}<\infty\Bigg\} ,
	\end{align*}
	which is a Hilbert space equipped with the inner product
	\begin{align}
	\lf<v_1,v_2\rt>_{\mathcal{W}_{\sigma}}:=\Big< e^{\sigma\sqrt{-\Delta}}v_1,e^{\sigma\sqrt{-\Delta}}v_2 \Big>_{L^2(\Omega)}, \quad \mbox{for all} ~ v_1, v_2 \in \mathcal{W}_{\sigma};\nonumber
	\end{align}
	its corresponding norm
	$
	\left\Vert v\right\Vert _{\mathcal{W}_{\sigma}}=\sqrt{\sum_{j=1}^{\infty}e^{2\sigma\lambda_{j}}\big|\big\langle v,\phi_{j}\big\rangle_{L^2(\Omega)} \big|^{2}}<\infty.
	$
	
\section{The ill-posedness of the nonlinear parabolic equation with random noise}\label{section3}
	
	In this section, for a special case of equation \eqref{parabolicproblemwhitenoise}, we  show that the nonlinear parabolic equation with random noise is ill-posed in the sense of Hadamard.
	\begin{theorem} \label{theorem2.1}
		Problem \eqref{parabolicproblemwhitenoise} is ill-posed in the  special case of $a=1, \Omega=(0, \pi)$.
	\end{theorem}
	\begin{proof}
		Since   $ \Omega =(0,  \pi)$ and $a(x,t)=1$,  Then $\la_{\bf N}={\bf N}^2 $.
		Let us consider the following parabolic equation
		\bq
		\left\{ \begin{gathered}
		\frac{\partial {\bf V}_{\delta, {\bf N(\delta) }} }{\partial t} -\Delta  {\bf V}_{\delta, {\bf N(\delta) }}(t) = F_0({\bf V}_{\delta, {\bf N(\delta) }}(x,t)) ,~~0<t<T, x \in (0, \pi)\hfill \\
		{\bf V}_{\delta, {\bf N(\delta) }}(0,t)= 	{\bf V}_{\delta, {\bf N(\delta) }}(\pi,t)=0,\hfill \\
		{\bf V}_{\delta, {\bf N(\delta) }}(x,T)=   G_{\delta, {\bf N(\delta) }}(x),  \hfill\\
		\end{gathered}  \right. \label{ex1}
		\eq	
		where $F_0$ is
		\begin{equation}
		F_0(v(x,t))= \sum_{j=1}^\infty \frac{e^{- T j^2 } }{2 T}  \left< v(t), \phi_j(x) \right> \phi_j(x)
		\end{equation}
		for any $v \in L^2(\Omega)$,
		and $\phi_j(x)= \sqrt{\frac{2}{\pi}} \sin (jx)$.
		Let us choose 	 $  G_{\delta, {\bf N(\delta) }} \in \mathcal   L^2(\Omega)$ be such that
		\begin{equation}
		  G_{\delta, {\bf N(\delta) }} (x)=  \sum_{j=1}^{\bf N(\delta) }  	\left \langle	g_\delta(x) , \phi_j(x) \right \rangle  \phi_j(x)
		\end{equation}
		where $ g_\delta$
 is defined	by
		\begin{equation}
		\left \langle  g_\delta , \phi_j \right \rangle =\delta  \left \langle	\xi , \phi_j \right \rangle ,\quad j=\overline {1,N}.  \label{obs33333}
		\end{equation}
	 By
	 the usual MISE decomposition which involves a variance term and a
	 bias term, we get
	 \begin{align}
	 {\bf E}	\|  G_{\delta, {\bf N(\delta) }}  \|_{L^2(\Omega)}^2  &={\bf E}	 \Big( \sum_{j=1}^{{\bf N(\delta) }}  	\left \langle	G_{\delta, {\bf N(\delta) }} , \phi_j \right \rangle^2  \Big) =  \delta^2  {\bf E} \Big( \sum_{j=1}^{{\bf N(\delta) }}  \xi_j^2 \Big)= \delta^2 {\bf N(\delta) }.
	 \end{align}	
	The solution of Problem \eqref{ex1} is given by Fourier series (see \cite{Tuan2})
		\begin{align} \label{uexact1}
	{\bf V}_{\delta, {\bf N(\delta) }}(x,t)=  \sum_{ j=1}^\infty \left[e^{(T-t)\lambda_j}  \left \langle	 G_{\delta, {\bf N(\delta) }} , \phi_j \right \rangle - \int_t^T  e^{(s-t) \lambda_j}  \langle	F_0(  {\bf V}_{\delta, {\bf N(\delta) }} (s)	 ) , \phi_j  \rangle  ds \right]\phi_j.
		\end{align}

		We show that Problem \eqref{uexact1} has unique solution $		{\bf V}_{\delta, {\bf N(\delta) }} \in C([0,T]; L^2(\Omega))$. Let us consider
		\begin{align}
		\Phi v:= \sum_{ j=1}^\infty  e^{(T-t)\lambda_j}  \left \langle	 G_{\delta, {\bf N(\delta) }} , \phi_j \right \rangle - \sum_{ j=1}^\infty  \left[\int_t^T  e^{(s-t) \lambda_j}  \langle	F_0( 	v(s)	 ) , \phi_j  \rangle  ds \right] \phi_j.
		\end{align}
		For any $v_1, v_2 \in  C([0,T]; L^2(\Omega)) $, using H\"older inequality, we have for all $t \in [0,T]$
		\begin{align}
		\|	\Phi v_1(t)- \Phi v_2(t)  \|^2_{L^2(\Omega)}&=\sum_{ j=1}^\infty  \left[\int_t^T  e^{(s-t) \lambda_j}  \langle	F_0( 	v_1(s)	 )- F_0( 	v_2(s)	 ) , \phi_j  \rangle  ds \right]^2 \nn\\
		&\le T \sum_{ j=1}^\infty \int_t^T  e^{2(s-t) \lambda_j}  \langle	F_0( 	v_1(s)	 )- F_0( 	v_2(s)	 ) , \phi_j  \rangle^2  ds\nn\\
		&= \frac{T}{4T^2} \sum_{ j=1}^\infty \int_t^T  e^{2(s-t-T) \lambda_j}  \langle	 	v_1(s)	 -  	v_2(s)	  , \phi_j  \rangle^2  ds\nn\\
		&\le \frac{1}{4T} \sum_{ j=1}^\infty \int_t^T    \langle	 	v_1(s)	 -  	v_2(s)	  , \phi_j  \rangle^2   ds  \le \frac{1}{4} \|v_1-v_2\|_{ C([0,T]; L^2(\Omega))}^2.
		\end{align}		
		Hence, we obtain that
		\begin{align}
		\|	\Phi v_1- \Phi v_2  \||_{ C([0,T]; L^2(\Omega))} \le \frac{1}{2} \|v_1-v_2\|_{ C([0,T]; L^2(\Omega))}.
		\end{align}
	This implies that $\Phi$  is a contraction. Using
	the Banach fixed-point theorem, we conclude that the equation $\Phi(w)=w$ has a
	unique solution  $		{\bf V}_{\delta, {\bf N(\delta) }} \in C([0,T]; L^2(\Omega) )$.	
Using the inequality $a^2+b^2 \ge \frac{1}{2} (a-b)^2,~~a, b \in \mathbb{R}$, 	we have the following estimate 
	\begin{align}
		\Big\|  {\bf V}_{\delta, {\bf N(\delta) }} \Big\|_{L^2(\Omega) }^2 & \ge  \underbrace{\frac{1}{2}	\Big\|  \sum_{ j=1}^\infty  e^{(T-t)\lambda_j}  \left \langle	 G_{\delta, {\bf N(\delta) }} ,\phi_j \right \rangle  \phi_j \Big\|_{L^2(\Omega) }^2}_{I_1} \nn\\
		& -  \underbrace {	\Big\|   \sum_{ j=1}^\infty   \left(   \int_t^T  e^{(s-t) \lambda_j}  \langle	F_0( {\bf V}_{\delta, {\bf N(\delta) }}(s)	 ) , \phi_j  \rangle  ds
		\right) \phi_j\Big\|_{L^2(\Omega) }^2  }_{I_2}. \label{es1}
	\end{align}
	First, using H\"older's inequality, we get
	\begin{align}
	I_2 &\le \sum_{ j=1}^\infty  \left(   \int_t^T  e^{(s-t) \lambda_j}  \langle	F_0( 	{\bf V}_{\delta, {\bf N(\delta) }}(s)	 ) , \phi_j  \rangle  ds
	\right)^2 \nn\\
	& \le  T \sum_{ j=1}^\infty  \int_t^T  e^{2(s-t) \lambda_j}  \langle	F_0( 	{\bf V}_{\delta, {\bf N(\delta) }}(s)	 ) , \phi_j  \rangle^2  ds \nn\\
	&\le \frac{T}{4T^2}   \int_t^T \sum_{ j=1}^\infty  e^{2(s-t-T) \lambda_j} \left< 	{\bf V}_{\delta, {\bf N(\delta) }}(t), \phi_j \right>^2ds \le \frac{1}{4} \big\|  	{\bf V}_{\delta, {\bf N(\delta) }} \big\|^2_{C([0,T];L^2(\Omega) )}. \label{es2}
	\end{align}
	And we have the lower bound for $I_1$
	\begin{align}
{\bf  E}	I_1 = \frac{1}{2}  \sum_{ j=1}^\infty e^{2(T-t)\la_j } {\bf  E}  \left \langle	G_{\delta, {\bf N(\delta) }} ,\phi_j \right \rangle^2= \frac{1}{2 } \sum_{j=1}^{\bf N} \delta^2 	e^{2(T-t)\la_j } \ge \frac{1}{2 } \delta^2  e^{2(T-t) \la_{\bf N(\delta) }}. \label{es3}
	\end{align}
	Combining \eqref{es1}, \eqref{es2}, \eqref{es3}, we obtain
	\begin{align}
	{\bf  E}	\Big\|   {\bf V}_{\delta, {\bf N(\delta) }}   \Big\|_{ L^2(\Omega) }^2 +\frac{1}{4} {\bf E} \big\|  {\bf V}_{\delta, {\bf N(\delta) }}  \big\|^2_{C([0,T];L^2(\Omega) )} \ge \frac{1}{2 } \delta^2  e^{2(T-t) \la_{\bf N(\delta) }}.
	\end{align}
	By taking supremum of  both sides on $[0,T]$, we get
	\begin{align}
   {\bf  E} \big\|  {\bf V}_{\delta, {\bf N(\delta) }}  \big\|^2_{C([0,T];L^2(\Omega) )}   \ge \frac{2}{5}  \sup_{0 \le t \le T}\delta^2  e^{2(T-t) \la_{\bf N(\delta) }}	=\frac{2}{5} \delta^2  e^{2T \la_{\bf N(\delta) }}= 	\frac{2}{5} \delta^2  e^{2T {\bf N^2(\delta) }}.
	\end{align}
	Choosing ${\bf N}:={\bf N}(\delta)= \sqrt{ \frac{1}{2T} \ln (\frac{1}{\delta}) }$,  we  obtain
	\begin{align} \label{ob1}
	 {\bf E}	\|  G_{\delta, {\bf N(\delta) }}  \|_{L^2(\Omega)}^2    = \delta^2 {\bf N(\delta) }= \delta^2 	\sqrt{ \frac{1}{2T} \ln (\frac{1}{\delta}) } \to 0,~\text{when}~\delta \to 0.
	\end{align}
		and
		\begin{align} \label{ob2}
	 {\bf E} \big\|  {\bf V}_{\delta, {\bf N(\delta) }}  \big\|^2_{C([0,T];L^2(\Omega) )}   \ge 	 	\frac{2}{5} \delta^2  e^{2T {\bf N^2(\delta) }}	=\frac{2}{5\delta}  \to +\infty,~\text{when}~\delta \to 0.
		\end{align}	
		From \eqref{ob1} and \eqref{ob2}, we can conclude that Problem \eqref{parabolicproblemwhitenoise} is ill-posed.
			\end{proof}

\section{Regularization result  with constant coefficient and globally Lipschitz source  function}\label{section4}
			In this section, we consider the question of finding the
			function $\mathbf u(x,t)$, $(x,t)\in \Omega \times [0,T]$,  that satisfies the problem
			\bq
			\left\{ \begin{gathered}
			\mathbf u_t-\Delta u =F(x,t,\mathbf u(x,t)),\quad (x,t) \in \Omega \times (0,T), \hfill \\
			\mathbf u|_{\partial \Omega}=0,\quad  t \in (0,T),\hfill\\
			\mathbf u(x,T)=g(x),\quad  (x,t) \in \Omega \times (0,T),\hfill  \label{parabolicproblemwhitenoise2}
			\end{gathered}  \right.
			\eq
			Now we have the  following lemma
			\begin{lemma} \label{lemmawhitenoise}
				Let $\overline G_{\delta, {\bf N}(\delta) } \in L^2(\Omega) $  be such that
				\begin{equation}\label{G-delta}
				\overline G_{\delta, {\bf N}(\delta) } = \sum_{j=1}^{\bf N(\delta) } 	\left \langle		g_\delta^{\text{obs}} , \phi_j \right \rangle  \phi_j.
				\end{equation}
				Assume that $ g \in H^{2\gamma}(\Omega) $. Then we have the following estimate
				\begin{equation}
				{\bf E}	\| \overline G_{\delta, {\bf N} (\delta) }  -g \|_{L^2(\Omega)}^2  \le \delta^2 {\bf N}(\delta)  + \frac{1}{\la_{\bf N(\delta)}^{2\gamma}} \|g\|_{H^{2\gamma}(\Omega)}^2
				\end{equation}
				for any $\gamma \ge 0$. Here ${\bf N}$ depends on $\delta$ and satisfies that $\lim_{\delta \to 0} {\bf N}(\delta)  =+\infty$.
			\end{lemma}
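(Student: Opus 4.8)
The plan is to follow the classical mean integrated squared error (MISE) decomposition already exploited in the proof of Theorem~\ref{theorem2.1}, splitting the total error into a stochastic (variance) contribution, coming from the noise carried by the first ${\bf N}(\delta)$ Fourier modes, and a deterministic (bias) contribution, coming from truncating the eigenfunction expansion at level ${\bf N}(\delta)$. First I would expand both $\overline G_{\delta,{\bf N}(\delta)}$ and $g$ in the orthonormal eigenbasis $\{\phi_j\}$ and write
\begin{align}
\overline G_{\delta,{\bf N}(\delta)} - g = \sum_{j=1}^{{\bf N}(\delta)} \left\langle g_\delta^{\text{obs}} - g, \phi_j \right\rangle \phi_j - \sum_{j={\bf N}(\delta)+1}^{\infty} \left\langle g, \phi_j \right\rangle \phi_j, \nonumber
\end{align}
and then invoke the observation model \eqref{obs11111} to replace $\left\langle g_\delta^{\text{obs}} - g, \phi_j \right\rangle$ by $\delta \left\langle \xi, \phi_j \right\rangle$.

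Since the two sums range over the disjoint index sets $\{1,\dots,{\bf N}(\delta)\}$ and $\{{\bf N}(\delta)+1,\dots\}$, all cross terms vanish by orthonormality, so Parseval's identity yields
\begin{align}
\left\| \overline G_{\delta,{\bf N}(\delta)} - g \right\|_{L^2(\Omega)}^2 = \delta^2 \sum_{j=1}^{{\bf N}(\delta)} \left\langle \xi, \phi_j \right\rangle^2 + \sum_{j={\bf N}(\delta)+1}^{\infty} \left\langle g, \phi_j \right\rangle^2. \nonumber
\end{align}
Taking expectations and using the defining property of the Gaussian white noise, namely $\left\langle \xi, \phi_j \right\rangle \sim \mathcal{N}\!\left(0, \|\phi_j\|_{L^2(\Omega)}^2\right)$ together with the normalization $\|\phi_j\|_{L^2(\Omega)} = 1$, each term ${\bf E}\left\langle \xi, \phi_j \right\rangle^2$ equals $1$, so the variance part collapses to exactly $\delta^2 {\bf N}(\delta)$.

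For the bias part I would use the monotonicity of the eigenvalues: because $\gamma \ge 0$ and $\lambda_j \ge \lambda_{{\bf N}(\delta)}$ for every $j \ge {\bf N}(\delta)+1$, we have $\lambda_j^{-2\gamma} \le \lambda_{{\bf N}(\delta)}^{-2\gamma}$, whence
\begin{align}
\sum_{j={\bf N}(\delta)+1}^{\infty} \left\langle g, \phi_j \right\rangle^2 = \sum_{j={\bf N}(\delta)+1}^{\infty} \lambda_j^{-2\gamma}\, \lambda_j^{2\gamma} \left\langle g, \phi_j \right\rangle^2 \le \frac{1}{\lambda_{{\bf N}(\delta)}^{2\gamma}} \sum_{j=1}^{\infty} \lambda_j^{2\gamma} \left\langle g, \phi_j \right\rangle^2 = \frac{1}{\lambda_{{\bf N}(\delta)}^{2\gamma}} \|g\|_{H^{2\gamma}(\Omega)}^2, \nonumber
\end{align}
where the last equality is the spectral characterization of the $H^{2\gamma}(\Omega)$-norm and is exactly where the hypothesis $g \in H^{2\gamma}(\Omega)$ is used to guarantee finiteness. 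Adding the two bounds gives the stated estimate.

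The argument is essentially routine and requires no fixed-point or compactness machinery, in contrast to the later sections. The only points I would be careful about are (i) the orthogonality observation that kills the cross terms, so that the stochastic and deterministic errors genuinely decouple, and (ii) the correct use of the white-noise covariance identity, which relies on the eigenfunctions being $L^2$-normalized; once these are in place the estimate follows immediately.
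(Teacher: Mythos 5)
Your proposal is correct and follows essentially the same route as the paper's own proof: the standard MISE decomposition into a variance term (handled via the white-noise covariance $\langle\xi,\phi_j\rangle \sim \mathcal{N}(0,1)$, giving $\delta^2{\bf N}(\delta)$) and a bias term (handled via the eigenvalue monotonicity $\lambda_j \ge \lambda_{{\bf N}(\delta)}$ and the spectral characterization of the $H^{2\gamma}$-norm). Your write-up is merely more explicit about the vanishing cross terms and the $L^2$-normalization of the $\phi_j$, and it also carries the square on $\|g\|_{H^{2\gamma}(\Omega)}^2$ correctly, which the paper's displayed bound drops in an apparent typo.
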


			\begin{proof}
				For the
				following proof, we consider the genuine model (\ref{obs222222}). By
				the usual MISE decomposition which involves a variance term and a
				bias term, we get
				\begin{align}
				{\bf E}	\| \overline G_{\delta, {\bf N(\delta) }}  -g \|_{L^2(\Omega)}^2  &={\bf E}	 \Big( \sum_{j=1}^{{\bf N(\delta) }}  	\left \langle		g_\delta^{\text{obs}} -g , \phi_j \right \rangle^2  \Big)+\sum_{j \geq {\bf N(\delta) }+1} 	\left \langle	g , \phi_j \right \rangle^2 \nn\\
				&=  \delta^2  {\bf E} \Big( \sum_{j=1}^{{\bf N(\delta) }}  \xi_j^2 \Big)+ \sum_{j \geq {\bf N(\delta) }+1} \la_j^{-2\gamma}  \la_j^{2\gamma}	\left \langle	g , \phi_j \right \rangle^2
				\end{align}
				Since $\xi_{j}= \langle \xi , \phi_j  \rangle \stackrel {iid}{\sim} N(0,1)$, it follows that  $ {\bf E} \xi_j^2=1$, so
				\begin{align}
				{\bf E}	\| \overline G_{\delta, {\bf N(\delta) }}  -g \|_{L^2(\Omega)}^2   \le  \delta^2 {\bf N(\delta) }+ \frac{1}{\la_{\bf N(\delta) }^{2\gamma}} \|g\|_{H^{2\gamma}}.
				\end{align}

\end{proof}
				Using truncation method, we give a regularized problem for Problem \eqref{parabolicproblemwhitenoise} as follows
				
				\bq
				\left\{ \begin{gathered}
				\frac{\partial }{\partial t}	{\bf u}_{\N(\delta) }^\delta -\Delta 	{\bf u}_{\N(\delta) }^\delta  = {\bf J}_{\al_{\bf N (\delta)}} F(x,t,\mathbf 	{\bf u}_{\N(\delta) }^\delta (x,t)),\quad (x,t) \in \Omega \times (0,T), \hfill \\
				{\bf u}_{\N(\delta) }^\delta  |_{\partial \Omega}=0,\quad  t \in (0,T),\hfill\\
				{\bf u}_{\N(\delta) }^\delta  (x,T)= {\bf J}_{\al_{\bf N (\delta)}} \overline  G_{\delta, {\bf N(\delta) }}(x),\quad  (x,t) \in \Omega \times (0,T),\hfill  \label{whitenoisere}
				\end{gathered}  \right.
				\eq
				where $\al_{\bf N(\delta) }$ is   regularization parameter and $ 	{\bf J}_{\al_{\bf N (\delta)}} $ is the following operator
				\begin{equation}
				{\bf J}_{\al_{\bf N (\delta)}} v:=  \sum_{ \la_j  \le \al_{\bf N (\delta)}   } \Big< v, \phi_j \Big>\phi_j,~~\text{for all}~~v \in  L^2(\Omega).
				\end{equation}
			
			Our  main result in this section  is as follows
			\begin{theorem}\label{thm:regularized-1}
				The problem \eqref{whitenoisere} has a unique solution ${\bf u}^\delta_{\bf N (\delta) } \in C([0,T];L^2(\Omega))$ which satisfies that
				\begin{align} \label{u-ep}
				{\bf u}^\delta_{\bf N(\delta) } (x,t)=  \sum_{ \la_j \le \al_{\bf N (\delta) } } \left[e^{(T-t)\lambda_j}  \left \langle	  \overline G_{\delta, {\bf N(\delta) }} , \phi_j \right \rangle - \int_t^T  e^{(s-t) \lambda_j}  \langle	F( 	{\bf u}^\delta_{\bf N(\delta) } (s)	 ) , \phi_j  \rangle  ds \right]\phi_j.
				\end{align}
				Assume  that problem \eqref{parabolicproblemwhitenoise}  has unique solution ${\bf u}$ such that
				\begin{equation}\label{bound-gevrey-u}
				\sum_{j=1}^\infty   \la_j^{2\beta}  e^{2 t \la_j} \left \langle	{\bf u}(.,t) , \phi_j \right \rangle^2 < A',\quad t \in [0,T].
				\end{equation}
				Let us choose  $\al_{\bf N(\delta) }$ such that
				\begin{equation} \label{cond}
				\lim_{\delta \to 0}  \al_{\bf N(\delta) }= +\infty,~\lim_{ \delta \to 0}  \frac{ e^{kT \al_{\bf N(\delta) }}  }  {\la_{\bf N(\delta) }^{\gamma}   }=0,~~ \lim_{\delta \to 0} e^{kT \al_{\bf N(\delta) }}  \sqrt{{\bf N(\delta)  } } \delta=0
				\end{equation}
				Then the following estimate holds
				\begin{align}
				{\bf E}		\| {\bf u}(.,t)- {\bf u}^\delta_{\bf N(\delta)}(.,t)\|_{L^2(\Omega)}^2  \le  2e^{2k^2 (T-t)} 	e^{-2t\al_{\bf N(\delta)}} \left[ \delta^2 {\bf N(\delta)} 	e^{2T\al_{\bf N(\delta)}}  + \frac{e^{2T\al_{\bf N(\delta)}}}{\la_{\bf N(\delta)}^{2\gamma}} \|g\|_{H^{2\gamma}} +\al_{\bf N(\delta)}^{-2\beta} \right]
				\end{align}
			\end{theorem}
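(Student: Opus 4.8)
The plan is to establish the two assertions in turn: the well-posedness of \eqref{whitenoisere} by a contraction argument, and the error bound by an orthogonal splitting of $\mathbf u-{\bf u}^\delta_{\bf N}$ followed by a Gronwall estimate in a weighted quantity. Throughout I abbreviate $N={\bf N}(\delta)$, write $\al_N=\al_{\bf N(\delta)}$ and $\la_N=\la_{\bf N(\delta)}$, and let $k$ denote the global Lipschitz constant of $F$. For existence and uniqueness I would define $\Phi$ on $C([0,T];L^2(\Omega))$ by the right-hand side of \eqref{u-ep} and produce its unique fixed point by the Banach theorem, exactly as in the proof of Theorem \ref{theorem2.1}. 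The decisive simplification is that ${\bf J}_{\al_N}$ retains only the finitely many modes with $\la_j\le\al_N$, so every kernel $e^{(s-t)\la_j}$ occurring in $\Phi$ is bounded by $e^{T\al_N}$. Together with $\|F(v_1)-F(v_2)\|\le k\|v_1-v_2\|$ and H\"older's inequality this gives $\|\Phi v_1(t)-\Phi v_2(t)\|_{L^2(\Omega)}^2\le Tk^2e^{2T\al_N}\int_t^T\|v_1(s)-v_2(s)\|_{L^2(\Omega)}^2\,ds$. This is a Volterra-type contraction, so a standard iteration shows that some power $\Phi^m$ is a contraction on $C([0,T];L^2(\Omega))$; hence $\Phi$ has a unique fixed point ${\bf u}^\delta_{\bf N}\in C([0,T];L^2(\Omega))$, which is precisely \eqref{u-ep}.

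For the error bound, note first that since $\mathbf u$ solves \eqref{parabolicproblemwhitenoise2}, its Fourier expansion is the all-mode analogue of \eqref{u-ep} with $g$ in place of $\overline G_{\delta,N}$; consequently ${\bf J}_{\al_N}\mathbf u$ is given by \eqref{u-ep} with $\overline G_{\delta,N}$ replaced by $g$ and ${\bf u}^\delta_{\bf N}$ replaced by $\mathbf u$ inside $F$. By orthogonality of high and low modes, $\|\mathbf u-{\bf u}^\delta_{\bf N}\|^2=\|\mathbf u-{\bf J}_{\al_N}\mathbf u\|^2+\|E\|^2$ with $E:={\bf J}_{\al_N}\mathbf u-{\bf u}^\delta_{\bf N}$. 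The high-mode tail is handled directly by \eqref{bound-gevrey-u}: since $\la_j>\al_N$ on its support, the bounds $\la_j^{-2\beta}\le\al_N^{-2\beta}$ and $e^{-2t\la_j}\le e^{-2t\al_N}$ yield $\|\mathbf u(.,t)-{\bf J}_{\al_N}\mathbf u(.,t)\|^2\le A'\al_N^{-2\beta}e^{-2t\al_N}$, which is the third term of the claimed bracket.

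The coupled low-mode error $E$ satisfies the Volterra identity obtained by subtracting the two representations, $E_j(t)=e^{(T-t)\la_j}\langle g-\overline G_{\delta,N},\phi_j\rangle-\int_t^Te^{(s-t)\la_j}\langle F(\mathbf u(s))-F({\bf u}^\delta_{\bf N}(s)),\phi_j\rangle\,ds$ for $\la_j\le\al_N$, in which the backward-parabolic instability shows up as a kernel $e^{(s-t)\la_j}>1$. The key device is to weight by $e^{t\al_N}$ and exploit, for $\la_j\le\al_N$ and $t\le s\le T$, the elementary bounds
\[
e^{t\al_N}e^{(T-t)\la_j}\le e^{T\al_N},\qquad e^{t\al_N}e^{(s-t)\la_j}=e^{s\al_N}e^{-(s-t)(\al_N-\la_j)}\le e^{s\al_N}.
\]
Setting $\Psi(t)=e^{2t\al_N}\|E(t)\|^2$, squaring the weighted identity, summing over $\la_j\le\al_N$, applying Cauchy--Schwarz in $s$ and the Lipschitz estimate $\sum_{\la_j\le\al_N}\langle F(\mathbf u)-F({\bf u}^\delta_{\bf N}),\phi_j\rangle^2\le k^2\|\mathbf u-{\bf u}^\delta_{\bf N}\|^2$, and using $e^{2s\al_N}\|\mathbf u-{\bf J}_{\al_N}\mathbf u\|^2\le A'\al_N^{-2\beta}$ to convert the tail into a source, I obtain $\Psi(t)\le 2e^{2T\al_N}\sum_{\la_j\le\al_N}\langle g-\overline G_{\delta,N},\phi_j\rangle^2+Ck^2\int_t^T(\Psi(s)+A'\al_N^{-2\beta})\,ds$. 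Gronwall's inequality then produces the factor $e^{2k^2(T-t)}$. Undoing the weight, adding the tail bound, and taking $\mathbf E$ — whereupon $\mathbf E\sum_{\la_j\le\al_N}\langle g-\overline G_{\delta,N},\phi_j\rangle^2\le\mathbf E\|\overline G_{\delta,N}-g\|^2\le\delta^2N+\la_N^{-2\gamma}\|g\|_{H^{2\gamma}}$ by Lemma \ref{lemmawhitenoise} — reproduces the three bracketed terms with prefactor $2e^{2k^2(T-t)}e^{-2t\al_N}$, and \eqref{cond} is exactly what sends the bound to zero as $\delta\to0$.

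I expect the main obstacle to be controlling this amplification in the \emph{nonlinear} coupling: a naive Gronwall on $\|E\|^2$ produces a kernel of size $\sim\al_N$ and an uncontrollable $e^{\al_N(T-t)}$ blow-up. The weighting $e^{t\al_N}$ together with $\la_j\le\al_N$ is what collapses the amplifying kernel to the contracting $e^{-(s-t)(\al_N-\la_j)}\le1$, so that the Gronwall kernel becomes the $\al_N$-independent constant governed by $k$ and the instability is confined to the explicit, data-multiplying factor $e^{2T\al_N}$. A secondary technical point is the Young-inequality bookkeeping of the cross term between the tail and $E$, which is what pins down the precise constant in the exponential $e^{2k^2(T-t)}$.
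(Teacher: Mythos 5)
Your proof is correct, and the existence part matches the paper's (the paper simply cites \cite{Tuan2} for the fixed-point argument you spell out), but your error estimate takes a genuinely different decomposition. The paper introduces an auxiliary function ${\bf v}^\delta_{\bf N(\delta)}$, defined in \eqref{v-ep} as the solution of the \emph{truncated integral equation with exact data} $g$, and splits ${\bf u}-{\bf u}^\delta_{\bf N(\delta)}$ by the triangle inequality into a noise-propagation term ${\bf u}^\delta_{\bf N(\delta)}-{\bf v}^\delta_{\bf N(\delta)}$ and a bias term ${\bf u}-{\bf v}^\delta_{\bf N(\delta)}$, each closed by its own weighted Gronwall argument (yielding \eqref{saiso1} and the $\al_{\bf N(\delta)}^{-2\beta}$ bound, respectively) before invoking Lemma \ref{lemmawhitenoise}. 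You instead split orthogonally at the projection ${\bf J}_{\al_{\bf N(\delta)}}{\bf u}$ of the exact solution: the high-mode tail is killed directly by \eqref{bound-gevrey-u}, while the low-mode error $E$ satisfies a single Volterra inequality whose Lipschitz coupling involves the \emph{full} error, which you re-split into tail (converted by the weight $e^{2s\al_{\bf N(\delta)}}$ into the $O(\al_{\bf N(\delta)}^{-2\beta})$ source) plus $E$ itself (the Gronwall variable). What each buys: the paper's two-step splitting keeps each Gronwall self-contained, but it tacitly requires existence and uniqueness for the auxiliary problem \eqref{v-ep} (never stated, though it follows from the same contraction argument); your route avoids any auxiliary solution, replaces the factor-$2$ triangle inequality by an exact Pythagorean identity, and needs only one Gronwall, at the price of the self-referential coupling you correctly resolve and of slightly different constants (your bound carries an extra $Tk^2(T-t)A'\al_{\bf N(\delta)}^{-2\beta}$ contribution inside the exponential prefactor). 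Since the paper's own stated constants are already loose --- its theorem drops the factor $A'$ and wavers between $e^{2k^2T(T-t)}$ and $e^{2k^2(T-t)}$ relative to its derivation --- this discrepancy is immaterial, and both arguments yield the claimed rate under \eqref{cond}.
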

			
			\begin{remark}
				1. From the theorem above, it is easy to see that ${\bf E}	 \lf\|  {\bf u}^\delta_{\bf N(\delta)} (x,t)-\mathbf u(x,t) \rt\|^2_{L^2(\Omega)}$ is of order
				\begin{equation}
				e^{-2t\al_{\bf N(\delta)}}   {\max } \Big( \delta^2 {\bf N(\delta)} 	e^{2T\al_{\bf N(\delta)}}, \frac{e^{2T\al_{\bf N(\delta)}}}{\la_{\bf N(\delta)}^{2\gamma}} ,\al_{\bf N(\delta)}^{-2\beta}    \Big).
				\end{equation}
				
				2. Now, we give one example for the choice of ${\bf N} (\delta)$	 which satisfies the condition \eqref{cond}.  Since $\la_{\bf N} \sim {\bf N}^{\frac{2}{d}} $ { See \cite{Courant}}, we choose $\al_{\bf N}$ such that $e^
				{kT \al_{\bf N(\delta)} }=|{\bf N(\delta)}|^a $ for any $0<a< \frac{2\gamma}{d}$. Then we have $\al_{\bf N(\delta)}  =\frac{a}{kT} \log (\bf N(\delta))$.   The number  ${\bf N(\delta)}$ is chosen as follows
				\[
				{\bf N(\delta)}= \left(  \frac{1}{\delta}\right)^{ba+\frac{b}{2}}
				\]
				for $0<b<1$.  With $ {\bf N} (\delta)$  chosen as  above,   ${\bf E}	 \lf\|  {\bf u}^\delta_{\bf N(\delta)} (x,t)-\mathbf u(x,t) \rt\|^2_{L^2(\Omega)}$ is of order
				$ \left(  \frac{1}{\delta}\right)^{ \frac{-(ba+\frac{b}{2})at} {kT} }$
				
				3. The existence and uniqueness of   solution of equation \eqref{parabolicproblemwhitenoise} is an open problem, and we do not investigate this problem here.  The case considered in Theorem \eqref{theorem2.1} give the existence of the solution of Problem \eqref{parabolicproblemwhitenoise} in a special case.

			\end{remark}

			\begin{proof}[{\bf Proof of Theorem \ref{thm:regularized-1}}]
				We divide the proof into some smaller parts.\\
				{\bf Part 1}. {\it The problem \eqref{whitenoisere} has a unique solution ${\bf u}^\delta_{\bf N(\delta)} \in C([0,T];L^2(\Omega))$.  }
				The proof is similar to \cite{Tuan2}( See Theorem 3.1, page 2975 \cite{Tuan2}).  Hence, we omit it here.\\
				{\bf Part 2}. Estimate the expectation of the error between the exact solution $u$ and the regularized solution ${\bf u}^\delta_{\bf N(\delta)} $.\\
				Let us consider the following integral equation
				\begin{align} \label{v-ep}
				{\bf v}^\delta_{\bf N(\delta)} (x,t)=  \sum_{ \la_j \le \al_{\bf N(\delta)}} \left[e^{(T-t)\lambda_j}  \left \langle	  g , \phi_j \right \rangle - \int_t^T  e^{(s-t) \lambda_j}  \langle	F( 	{\bf v}^\delta_{\bf N (\delta)} (s)	 ) , \phi_j  \rangle  ds \right]\phi_j.
				\end{align}
				We have
				\begin{align}
				\| 	{\bf u}^\delta_{\bf N(\delta)}(.,t)  - 	{\bf v}^\delta_{\bf N(\delta)}(.,t) \|_{L^2(\Omega)}^2 &\le 2    \sum_{ \la_j \le \al_N} e^{2(T-t)\lambda_j}  \left \langle	\overline G_{\delta, {\bf N(\delta)}} - g , \phi_j \right \rangle^2\nn\\
				&+  2    \sum_{ \la_j \le \al_{\bf N (\delta)}} \left[  \int_t^T  e^{(s-t) \lambda_j}   \Big( F_j(	{\bf u}^\delta_{\bf N(\delta)}  )(s)- F_j(	{\bf v}^\delta_{\bf N(\delta)}  )(s)   \Big)   ds \right]^2 \nn\\
				&\le 2 e^{2 (T-t) \al_{\bf N} } \sum_{ \la_j \le \al_{\bf N (\delta)}}  \left \langle	\overline G_{\delta, {\bf N(\delta)}} - g , \phi_j \right \rangle^2\nn\\
				&+2 (T-t) \int_t^T  e^{2(s-t) \al_{\bf N(\delta)} }  \sum_{ \la_j \le \al_{\bf N (\delta)}}  \Big( F_j(	{\bf u}^\delta_{\bf N(\delta)}  )(s)- F_j(	{\bf v}^\delta_{\bf N(\delta)}  )(s)   \Big)^2   ds  \nn\\
				&\le  2 e^{2 (T-t) \al_{\bf N} } 	\| 	\overline G_{\delta, {\bf N(\delta)}} -g \|_{ L^2(\Omega)}^2\nn\\
				&+2k^2 T \int_t^T e^{2(s-t)\al_{\bf N} }  	\| 	{\bf u}^\delta_{\bf N(\delta)}(.,s)  - 	{\bf v}^\delta_{\bf N(\delta)}(.,s) \|_{L^2(\Omega)}^2 	ds.
				\end{align}
				Taking the expectation of both sides of the last inequality, we get
				\begin{align}
				{\bf E}		\| 	{\bf u}^\delta_{\bf N(\delta)}(.,t)  - 	{\bf v}^\delta_{\bf N(\delta)}(.,t) \|_{L^2(\Omega)}^2  &\le 2 e^{2 (T-t) \al_{\bf N(\delta)} } 	{\bf E}	\| \overline G_{\delta, {\bf N(\delta)}} -g \|_{L^2(\Omega)}^2\nn\\
				&+2k^2 T \int_t^T e^{2(s-t)\al_{\bf N} } 		{\bf E}		\| 	{\bf u}^\delta_{\bf N(\delta)}(.,s)  - 	{\bf v}^\delta_{\bf N(\delta)}(.,s) \|_{L^2(\Omega)}^2 ds.
				\end{align}
				Multiplying both sides with $e^{2t\al_{\bf N} }$, we obtain
				\begin{align}
				e^{2t\al_{\bf N(\delta)} } 	{\bf E}		\| 	{\bf u}^\delta_{\bf N(\delta)}(.,t)  - 	{\bf v}^\delta_{\bf N(\delta)}(.,t) \|_{L^2(\Omega)}^2  &\le 2 e^{2 T \al_{\bf N(\delta)} } 	{\bf E}	\| \overline G_{\delta, {\bf N(\delta)}} -g \|_{L^2(\Omega)}^2\nn\\
				&+2k^2 T \int_t^T e^{2s\al_{\bf N(\delta)} } 	{\bf E}		\| 	{\bf u}^\delta_{\bf N(\delta)}(.,s)  - 	{\bf v}^\delta_{\bf N}(.,s) \|_{L^2(\Omega)}^2 ds.
				\end{align}	
				Applying Gronwall's inequality, we get
				\begin{align}
				e^{2t\al_{\bf N(\delta)} } 	{\bf E}		\| 	{\bf u}^\delta_{\bf N (\delta)}(.,t)  - 	{\bf v}^\delta_{\bf N(\delta)}(.,t) \|_{L^2(\Omega)}^2 \le 2 e^{2 T \al_{\bf N(\delta)} }  e^{2k^2 T(T-t)}	{\bf E}	\| \overline G_{\delta, {\bf N(\delta)}} -g \|_{L^2(\Omega)}^2 .
				\end{align}
				Hence, using Lemma \ref{lemmawhitenoise}, we deduce that
				\begin{align}
				{\bf E}		\| 	{\bf u}^\delta_{\bf N(\delta)}(.,t)  - 	{\bf v}^\delta_{\bf N(\delta)}(.,t) \|_{L^2(\Omega)}^2  &\le 2  e^{2k^2 T(T-t)} e^{2 (T-t) \al_{\bf N(\delta)} } 	{\bf E}	\| \overline G_{\delta, {\bf N(\delta)}} -g \|_{L^2(\Omega)}^2  \nn\\
				&\le 2  e^{2k^2 T(T-t)} e^{2 (T-t) \al_{\bf N(\delta)} }  \Big(\delta^2 {\bf N(\delta)}+ \frac{1}{\la_{\bf N(\delta)}^{2\gamma}} \|g\|_{H^{2\gamma}} \Big). \label{saiso1}
				\end{align}
				Now, we continue to estimate $\| {\bf u}(.,t)- {\bf v}^\delta_{\bf N(\delta)}(.,t)\|_{L^2(\Omega)}$. Indeed, using H\"older's inequality and globally Lipschitz property of $F$,  we get
				\begin{eqnarray}
				\begin{aligned}
				&\| {\bf u}(.,t)- {\bf v}^\delta_{\bf N(\delta)}(.,t)\|_{L^2(\Omega)}^2 \nn\\
				& \le 2 	\sum_{ \la_j \le \al_{\bf N (\delta)}} \left[ \int_t^T  e^{(s-t) \lambda_j}   \Big( F_j(	{\bf u}  )(s)- F_j(	{\bf v}^\delta_{\bf N (\delta)}  )(s)   \Big)   ds \right]^2  +2 \sum_{ \la_j > \al_N} \left \langle	{\bf u}(t) , \phi_j \right \rangle^2 \nn\\
				&\le 2 \sum_{ \la_j > \al_N}  \la_j^{-2\beta} e^{-2t \la_j } \la_j^{2\beta} e^{2t \la_j }\left \langle	{\bf u}(t) , \phi_j \right \rangle^2 +2 k^2 \int_t^T e^{2(s-t)\la_N } 	 	\| {\bf u}(.,s)- {\bf v}^\delta_{\bf N(\delta)}(.,s)\|_{L^2(\Omega)}^2 ds \nn\\
				&\le \al_N^{-2\beta} e^{-2t \al_N} 	\sum_{j=1}^\infty   \la_j^{2\beta}  e^{2 t \la_j} \left \langle	{\bf u}(t) , \phi_j \right \rangle^2+2 k^2 \int_t^T e^{2(s-t)\al_{\bf N(\delta)} } 		\| {\bf u}(.,s)- {\bf v}^\delta_{\bf N(\delta)}(.,s)\|_{L^2(\Omega)}^2 ds.
				\end{aligned}
				\end{eqnarray}
				{ Above, we have used  the mild solution of $u$ as follows
						\begin{align*}
						{\bf u} (x,t)=  \sum_{j=1}^\infty \left[e^{(T-t)\lambda_j}  \left \langle	  g , \phi_j \right \rangle - \int_t^T  e^{(s-t) \lambda_j}  \langle	F( {\bf u}(s) ) , \phi_j  \rangle  ds \right]\phi_j.
						\end{align*}
					}
				Multiplying both sides with $e^{2t\al_{\bf N(\delta)} }$, we obtain
				\begin{align}
				e^{2t\al_{\bf N(\delta)}} 	\| {\bf u}(.,t)- {\bf v}^\delta_{\bf N(\delta)}(.,t)\|_{L^2(\Omega)}^2  &\le  \al_{\bf N(\delta)}^{-2\beta} 	\sum_{j=1}^\infty   \la_j^{2\beta}  e^{2 t \la_j} \left \langle	{\bf u}(.,t) , \phi_j \right \rangle^2\nn\\
				&+2 k^2 \int_t^T e^{2s\al_{\bf N} } \| {\bf u}(.,s)- {\bf v}^\delta_{\bf N(\delta)}(.,s)\|_{L^2(\Omega)}^2  ds.
				\end{align}
				Gronwall's inequality implies that
				\begin{equation}
				e^{2t\al_{\bf N}}		\| {\bf u}(.,t)- {\bf v}^\delta_{\bf N(\delta)}(.,t)\|_{L^2(\Omega)}^2  \le e^{2k^2(T-t)}  \al_{\bf N(\delta)}^{-2\beta}  A'.
				\end{equation}
				This together with the estimate \eqref{saiso1} leads to
				\begin{align}
				{\bf E}		\| {\bf u}(.,t)- {\bf u}^\delta_{\bf N(\delta)}(.,t)\|_{L^2(\Omega)}^2 &\le 	2 	{\bf E}		\| 	{\bf u}^\delta_{\bf N}(.,t)  - 	{\bf v}^\delta_{\bf N(\delta)}(.,t) \|_{L^2(\Omega)}^2 + 2 	\| {\bf u}(.,t)- {\bf v}^\delta_{\bf N(\delta)}(.,t)\|_{L^2(\Omega)}^2  \nn\\
				&\le 2e^{2k^2 (T-t) \al_{\bf N}}  \Big(\delta^2 {\bf N(\delta)}+ \frac{1}{\la_{\bf N(\delta)}^{2\gamma}} \|g\|_{H^{2\gamma}} \Big)+2   \al_{\bf N(\delta)}^{-2\beta}	e^{-2t\al_{\bf N}} e^{2k^2(T-t)} A'\nn\\
				\end{align}
where $A'$ is given in equation \eqref{bound-gevrey-u}.
				This completes our proof.
			\end{proof}
			The next theorem provides an error estimate in the Sobolev space $H^p (\Omega)$ which is equipped with a
			norm defined by
			\bes
			\|g\|_{H^p(\Omega)}^2=  \sum\limits_{j=1}^{\infty}    \la_j^p  \Big<g, \phi_{j}(x)\Big>^{2}.
			\ens
			To estimate the error in $H^p$ norm, we need stronger assumption of the  solution $u$.
			\begin{theorem}
				Assume that problem \eqref{parabolicproblemwhitenoise}  has unique solution ${\bf u}$ such that
				\begin{equation} \label{assumption2}
				\sum_{j=1}^\infty    e^{2 (t+r) \la_j} \left \langle	{\bf u}(.,t) , \phi_j \right \rangle^2 < A",\quad t \in [0,T].
				\end{equation}
				for any $r>0$.
				Let us choose  $\al_{\bf N(\delta) }$ such that
				\begin{equation}
				\lim_{\delta \to 0}  \al_{\bf N(\delta) }= +\infty,~\lim_{ \delta \to 0}  \frac{ e^{kT \al_{\bf N(\delta) }}  }  {\la_{\bf N(\delta) }^{\gamma}   }=0,~~ \lim_{\delta \to 0} e^{kT \al_{\bf N(\delta) }}  \sqrt{{\bf N(\delta)  } } \delta=0
				\end{equation}
				Then the following estimate holds
				\begin{align}
				&{\bf E}\|  {\bf u}^\delta_{\bf N(\delta)}(.,t)- {\bf u}(.,t)\|_{H^p(\Omega)}^2 	\\
&\le  2 e^{2k^2 T(T-t)}e^{-2t\al_{\bf N}}   |\al_{\bf N (\delta)}|^p  \left[  2  \delta^2 {\bf N(\delta)} 	e^{2T\al_{\bf N(\delta)}}  + 2\frac{e^{2T\al_{\bf N(\delta)}}}{\la_{\bf N(\delta)}^{2\gamma}} \|g\|_{H^{2\gamma}}+  A" e^{-2r \al_{\bf N_\delta}}  \right] \nn\\
				&+   A" |\al_{\bf N (\delta)}|^p \exp\Big(-2(t+r)  \al_{\bf N(\delta)}  \Big) .
				\end{align}
			\end{theorem}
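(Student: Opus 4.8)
The plan is to follow the architecture of the proof of Theorem~\ref{thm:regularized-1}, re-running every estimate in the $H^p(\Omega)$ norm and exploiting that the regularized solution ${\bf u}^\delta_{\bf N(\delta)}$ (and the auxiliary function ${\bf v}^\delta_{\bf N(\delta)}$ of \eqref{v-ep}) are spectrally supported on the modes $\lambda_j \le \al_{\bf N(\delta)}$. The first move is to split the $H^p$ error at the truncation level, using that ${\bf u}^\delta_{\bf N(\delta)}$ has no Fourier component with $\lambda_j > \al_{\bf N(\delta)}$:
\[
\| {\bf u}(.,t)-{\bf u}^\delta_{\bf N(\delta)}(.,t)\|_{H^p(\Omega)}^2 = \sum_{\lambda_j \le \al_{\bf N(\delta)}} \lambda_j^p \langle {\bf u}-{\bf u}^\delta_{\bf N(\delta)},\phi_j\rangle^2 + \sum_{\lambda_j > \al_{\bf N(\delta)}} \lambda_j^p \langle {\bf u}(.,t),\phi_j\rangle^2.
\]
The two sums are the low-frequency part (handled through ${\bf v}^\delta_{\bf N(\delta)}$) and the high-frequency tail of the exact solution (handled through the Gevrey bound \eqref{assumption2}).

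For the low-frequency sum I would bound $\lambda_j^p \le |\al_{\bf N(\delta)}|^p$ on the range $\lambda_j \le \al_{\bf N(\delta)}$ and then pass to $L^2$; since ${\bf u}^\delta_{\bf N(\delta)}-{\bf v}^\delta_{\bf N(\delta)}$ is itself supported on $\lambda_j \le \al_{\bf N(\delta)}$, the triangle inequality gives
\[
\sum_{\lambda_j \le \al_{\bf N(\delta)}} \lambda_j^p \langle {\bf u}-{\bf u}^\delta_{\bf N(\delta)},\phi_j\rangle^2 \le 2|\al_{\bf N(\delta)}|^p\Big( \| {\bf u}^\delta_{\bf N(\delta)}-{\bf v}^\delta_{\bf N(\delta)}\|_{L^2(\Omega)}^2 + \| {\bf u}-{\bf v}^\delta_{\bf N(\delta)}\|_{L^2(\Omega)}^2\Big).
\]
Taking expectations, the first $L^2$ term is controlled by the already-established bound \eqref{saiso1}, which produces precisely the variance term $\delta^2{\bf N(\delta)}$ and bias term $\la_{\bf N(\delta)}^{-2\gamma}\|g\|_{H^{2\gamma}}$ (each carrying $e^{2(T-t)\al_{\bf N(\delta)}}$ and the Gronwall constant $e^{2k^2T(T-t)}$). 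For the second $L^2$ term I would re-run the mild-solution/Gronwall argument of Theorem~\ref{thm:regularized-1}, the only change being that the tail $\sum_{\lambda_j>\al_{\bf N(\delta)}}\langle{\bf u}(t),\phi_j\rangle^2$ is now estimated with \eqref{assumption2}: factoring $e^{-2(t+r)\lambda_j}\le e^{-2(t+r)\al_{\bf N(\delta)}}$ yields a tail of size $A''e^{-2(t+r)\al_{\bf N(\delta)}}$, so that after multiplying by $e^{2t\al_{\bf N(\delta)}}$ and invoking Gronwall one gets $\| {\bf u}-{\bf v}^\delta_{\bf N(\delta)}\|_{L^2(\Omega)}^2 \le C A'' e^{2k^2(T-t)} e^{-2t\al_{\bf N(\delta)}} e^{-2r\al_{\bf N(\delta)}}$; multiplied by $|\al_{\bf N(\delta)}|^p$ this is the term $A''e^{-2r\al_{\bf N(\delta)}}$ in the bracket.

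The step I expect to be the main obstacle is the high-frequency tail $\sum_{\lambda_j > \al_{\bf N(\delta)}} \lambda_j^p \langle {\bf u}(.,t),\phi_j\rangle^2$, since the weight $\lambda_j^p$ now \emph{grows} and must be dominated by the Gevrey weight of \eqref{assumption2}. The key elementary fact is that $\lambda \mapsto \lambda^p e^{-2(t+r)\lambda}$ is decreasing on $[\al_{\bf N(\delta)},\infty)$ as soon as $\al_{\bf N(\delta)} \ge p/(2r)$; because $\al_{\bf N(\delta)}\to\infty$ this holds for all small $\delta$, and \emph{uniformly in} $t\in[0,T]$ precisely because $r>0$ keeps the exponent strictly positive even at $t=0$. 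Hence $\lambda_j^p e^{-2(t+r)\lambda_j}\le |\al_{\bf N(\delta)}|^p e^{-2(t+r)\al_{\bf N(\delta)}}$ for every $\lambda_j>\al_{\bf N(\delta)}$, and writing $\langle{\bf u},\phi_j\rangle^2 = e^{-2(t+r)\lambda_j}\,e^{2(t+r)\lambda_j}\langle{\bf u},\phi_j\rangle^2$ and applying \eqref{assumption2} gives the stand-alone term $A''|\al_{\bf N(\delta)}|^p e^{-2(t+r)\al_{\bf N(\delta)}}$. Collecting the low-frequency contribution and this tail, and absorbing the various Gronwall prefactors into the common constant $e^{2k^2T(T-t)}$, yields the stated inequality.
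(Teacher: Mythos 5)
Your proposal is correct and follows essentially the same route as the paper: the split at the cutoff $\al_{\bf N(\delta)}$ is exactly the paper's decomposition via ${\bf J}_{\al_{\bf N(\delta)}}{\bf u}$, the low-frequency part is handled through ${\bf v}^\delta_{\bf N(\delta)}$ with the bound \eqref{saiso1} and a re-run Gronwall argument using \eqref{assumption2}, and the high-frequency tail uses the same monotonicity of $\xi\mapsto\xi^p e^{-2(t+r)\xi}$ for $2r\al_{\bf N(\delta)}\ge p$ that the paper formalizes with the function $G$ in \eqref{ine1}. The only cosmetic difference is where the factor $2$ from the triangle inequality enters, which does not change the constants in the final estimate.
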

			\begin{proof}
				First, we have
				\begin{align}
				{\bf E}\|  {\bf u}^\delta_{\bf N(\delta)}(.,t)-{\bf J}_{\al_{\bf N (\delta)}} {\bf u}(.,t)\|_{H^p(\Omega)}^2&= {\bf E} \left( \sum_{ \la_j \le \al_{\bf N (\delta) } } \la_j^{p}  \left \langle  {\bf u}^\delta_{\bf N(\delta)}(x,t)-	{\bf u}(x,t) , \phi_j(x) \right \rangle^2  \right)\nn\\
				& \le |\al_{\bf N (\delta)}|^p {\bf E} \left( \sum_{ \la_j \le \al_{\bf N (\delta) } }   \left \langle  {\bf u}^\delta_{\bf N(\delta)}(x,t)-	{\bf u}(x,t) , \phi_j(x) \right \rangle^2 \right)\nn\\
				&\le |\al_{\bf N (\delta)}|^p  	{\bf E} \|  {\bf u}^\delta_{\bf N(\delta)}(.,t)- {\bf u}(.,t)\|_{L^2(\Omega)}^2. \label{er1}
				\end{align}
				Next, we continue to estimate $	{\bf E} \|  {\bf u}^\delta_{\bf N(\delta)}(.,t)- {\bf u}(.,t)\|_{L^2(\Omega)}^2$ with the assumption \eqref{assumption2}. Let us recall $ {\bf v}^\delta_{\bf N(\delta)}$  from \eqref{v-ep}. The expectation of the error between $ {\bf u}^\delta_{\bf N(\delta)}$ and $ {\bf v}^\delta_{\bf N(\delta)}$ is given in the estimation \eqref{saiso1} as
				\begin{align}
				{\bf E}		\| 	{\bf u}^\delta_{\bf N(\delta)}(.,t)  - 	{\bf v}^\delta_{\bf N(\delta)}(.,t) \|_{L^2(\Omega)}^2  \le  2  e^{2k^2 T(T-t)} e^{2 (T-t) \al_{\bf N(\delta)} }  \Big(\delta^2 {\bf N(\delta)}+ \frac{1}{\la_{\bf N(\delta)}^{2\gamma}} \|g\|_{H^{2\gamma}} \Big). \label{saiso2}
				\end{align}
				Now, we only need to estimate  $\| {\bf u}(.,t)- {\bf v}^\delta_{\bf N(\delta)}(.,t)\|_{L^2(\Omega)}$. Indeed, using H\"older's inequality and globally Lipschitz property of $F$,  we get
				\begin{eqnarray}
				\begin{aligned}
				&\| {\bf u}(.,t)- {\bf v}^\delta_{\bf N(\delta)}(.,t)\|_{L^2(\Omega)}^2 \nn\\
				& \le  2 \sum_{ \la_j > \al_N} \left \langle	{\bf u}(t) , \phi_j \right \rangle^2+ 2 	\sum_{ \la_j \le \al_{\bf N (\delta)}} \left[ \int_t^T  e^{(s-t) \lambda_j}   \Big( F_j(	{\bf u}  )(s)- F_j(	{\bf v}^\delta_{\bf N (\delta)}  )(s)   \Big)   ds \right]^2  \nn\\
				&\le 2 \sum_{ \la_j > \al_N}   e^{-2(t+r) \la_j }  e^{2(t+r) \la_j }\left \langle	{\bf u}(t) , \phi_j \right \rangle^2 +2 k^2T \int_t^T  e^{-2(s-t) \al_{\bf N_\delta}}  	\| {\bf u}(.,s)- {\bf v}^\delta_{\bf N(\delta)}(.,s)\|_{L^2(\Omega)}^2 ds \nn\\
				&\le  e^{-2(t+r) \al_{\bf N_\delta}} 	\sum_{j=1}^\infty   \  e^{2 (t+r) \la_j} \left \langle	{\bf u}(t) , \phi_j \right \rangle^2+2 k^2 T \int_t^T e^{2(s-t)\al_{\bf N(\delta)} } 		\| {\bf u}(.,s)- {\bf v}^\delta_{\bf N(\delta)}(.,s)\|_{L^2(\Omega)}^2 ds.
				\end{aligned}
				\end{eqnarray}
				Multiplying both sides with $e^{2t\al_{\bf N(\delta)} }$, we obtain
				\begin{align}
				e^{2t\al_{\bf N(\delta)}} 	\| {\bf u}(.,t)- {\bf v}^\delta_{\bf N(\delta)}(.,t)\|_{L^2(\Omega)}^2  &\le A"  e^{-2r \al_{\bf N_\delta}}  \nn\\
				&+2 k^2 T \int_t^T 	e^{2s\al_{\bf N(\delta)}} \| {\bf u}(.,s)- {\bf v}^\delta_{\bf N(\delta)}(.,s)\|_{L^2(\Omega)}^2  ds.
				\end{align}
				Gronwall's inequality implies that
				\begin{equation}
				e^{2t\al_{\bf N}}		\| {\bf u}(.,t)- {\bf v}^\delta_{\bf N(\delta)}(.,t)\|_{L^2(\Omega)}^2  \le e^{2k^2T (T-t)}  A"  e^{-2r \al_{\bf N_\delta}} .
				\end{equation}
				This last estimate together with the estimate \eqref{saiso2} leads to
				\begin{align}
				&{\bf E}		\| {\bf u}(.,t)- {\bf u}^\delta_{\bf N(\delta)}(.,t)\|_{L^2(\Omega)}^2 \nn\\
				&\le 	2 	{\bf E}		\| 	{\bf u}^\delta_{\bf N}(.,t)  - 	{\bf v}^\delta_{\bf N(\delta)}(.,t) \|_{L^2(\Omega)}^2 + 2 	\| {\bf u}(.,t)- {\bf v}^\delta_{\bf N(\delta)}(.,t)\|_{L^2(\Omega)}^2  \nn\\
				&\le  4  e^{2k^2 T(T-t)} e^{2 (T-t) \al_{\bf N(\delta)} }  \Big(\delta^2 {\bf N(\delta)}+ \frac{1}{\la_{\bf N(\delta)}^{2\gamma}} \|g\|_{H^{2\gamma}} \Big)+2  e^{2k^2T(T-t)} A"	e^{-2t\al_{\bf N}}  e^{-2r \al_{\bf N_\delta}} \nn\\
				&= 2 e^{2k^2 T(T-t)}e^{-2t\al_{\bf N}}   \left[  2  \delta^2 {\bf N(\delta)} 	e^{2T\al_{\bf N(\delta)}}  + 2\frac{e^{2T\al_{\bf N(\delta)}}}{\la_{\bf N(\delta)}^{2\gamma}} \|g\|_{H^{2\gamma}}+  A" e^{-2r \al_{\bf N_\delta}}  \right]. \label{er2}
				\end{align}
				On the other hand, consider  the function
				\begin{equation}
				G(\xi)= \xi^p e^{-D \xi},~~D>0. \label{ine1}
				\end{equation}
				The derivetive of $G$ is $G'(\xi)=\xi^{p-1} e^{-D\xi} (p-D\xi) $. Hence  we know that
				$G$ is strictly decreasing when $D\xi \ge p$.   Since $\lim_{\delta \to 0} \al_{\bf N(\delta)} =+\infty $, we see that if $\delta$ is small enough then  $2r \al_{\bf N(\delta)}  \ge p$.  Replace $D=2(t+r),~\xi=\al_{\bf N(\delta)}  $ into \eqref{ine1}, we obtain
				for $\la_j > \al_{\bf N(\delta)}  $
				\be
				G(\la_j)=	\la_j^{p}  \exp\Big(-2(t+r) \la_j \Big)  \le G(\al_{\bf N(\delta)} )=  |\al_{\bf N (\delta)}|^p \exp\Big(-2(t+r)  \al_{\bf N(\delta)}  \Big)
				\en
				The latter equatlity leads to
				\begin{align}
				\|  {\bf u}(.,t)-{\bf J}_{\al_{\bf N (\delta)}} {\bf u}(.,t)\|_{H^p(\Omega)}^2&=  \sum_{ \la_j > \al_{\bf N (\delta) } } \la_j^{p}  \left \langle  {\bf u}(x,t) , \phi_j(x) \right \rangle^2 \nn\\
				&=  \sum_{ \la_j > \al_{\bf N (\delta) } } \la_j^{p} \exp\Big(-2(t+r) \la_j \Big) \exp\Big(2(t+r) \la_j \Big)  \left \langle  {\bf u}(x,t) , \phi_j(x) \right \rangle^2\nn\\
				&\le |\al_{\bf N (\delta)}|^p \exp\Big(-2(t+r)  \al_{\bf N(\delta)}  \Big)  \sum_{ \la_j > \al_{\bf N (\delta) } }  \exp\Big(2(t+r) \la_j \Big)  \left \langle  {\bf u}(x,t) , \phi_j(x) \right \rangle^2\nn\\
				&\le A" |\al_{\bf N (\delta)}|^p \exp\Big(-2(t+r)  \al_{\bf N(\delta)}  \Big)  \label{er3}
				\end{align}
				where we use  the assumption \eqref{assumption2} for  the last inequality.
				Combining \eqref{er1}, \eqref{er2} and \eqref{er3}, we deduce that
				\begin{align}
				&{\bf E}\|  {\bf u}^\delta_{\bf N(\delta)}(.,t)- {\bf u}(.,t)\|_{H^p(\Omega)}^2\nn\\
				&\le 	{\bf E}\|  {\bf u}^\delta_{\bf N(\delta)}(.,t)-{\bf J}_{\al_{\bf N (\delta)}} {\bf u}(.,t)\|_{H^p(\Omega)}^2+ 	\|  {\bf u}(.,t)-{\bf J}_{\al_{\bf N (\delta)}} {\bf u}(.,t)\|_{H^p(\Omega)}^2 \nn\\
				&\le  2 e^{2k^2 T(T-t)}e^{-2t\al_{\bf N}}   |\al_{\bf N (\delta)}|^p  \left[  2  \delta^2 {\bf N(\delta)} 	e^{2T\al_{\bf N(\delta)}}  + 2\frac{e^{2T\al_{\bf N(\delta)}}}{\la_{\bf N(\delta)}^{2\gamma}} \|g\|_{H^{2\gamma}}+  A" e^{-2r \al_{\bf N_\delta}}  \right] \nn\\
				&+   A" |\al_{\bf N (\delta)}|^p \exp\Big(-2(t+r)  \al_{\bf N(\delta)}  \Big)
				\end{align}
				which completes the proof.
				
			\end{proof}

	\section{Regularization result  with locally  Lipschitz source }\label{section5}
	Section \ref{section4} has addressed a problem in which $F$ is a global Lipschitz function. In this section we extend the analysis to a locally Lipschitz function $F$. Results for the locally Lipschitz case are  difficult.  Hence, we have to find another regularization method to study the problem with  locally Lipschitz source.\\
	Assume that $a$ is noisy by the observation data $a_\delta^{\text{obs}} :\Omega \times [0,T] \to \mathbb{R}$ as follows
	\begin{equation}
		{\bf a}_\delta^{\text{obs}}(x,t)= a(x,t)+\delta \psi(t)  \label{obs1}
	\end{equation}
	where
	$\delta >0$
	is the amplitude of the noise	and $\psi$
	is Brownian motion in $t$.

	Assume that for each ${\mathcal R}>0$, there exists $K_{\mathcal R}>0$ such that
	\begin{align} \label{local-lip-F}
		|{F} (x,t;u) - {F}(x,t;v)| \leq K_{\mathcal R} |u-v|, ~\mbox{if}~ \max\{|u|, |v|\} \leq {\mathcal R},
	\end{align}
	where $(x,t) \in \Omega \times [0,T]$ and
	\begin{align*}
		K_{\mathcal R}:= \sup \left\{ \left|\frac{{F} (x,t;u) - {F}(x,t;v)}{u-v} \right|: \max\{|u|, |v|\} \leq {\mathcal R}, u \neq v, (x,t) \in \Omega \times [0,T] \right\} < +\infty.
	\end{align*}
	We note that $K_{\mathcal R} $ is increasing and $\lim_{{\mathcal R} \rightarrow +\infty} K_{\mathcal R} = +\infty$. Now, we outline our idea to construct a regularization for the problem \eqref{parabolicproblemwhitenoise}. For all $\mathcal R>0$, we approximate $F$ by $\mathcal{F}_{\mathcal R}$ defined by
	\begin{align} \label{df-F}
		\mathcal{F}_{\mathcal R}(x,t;w) :=
		\begin{cases}
    F(x,t;-\mathcal R), & w \in (-\infty,-\mathcal R)\\
F(x,t;u), & w \in [-\mathcal R, \mathcal R]\\
			F(x,t;\mathcal R), & w \in (\mathcal R,+\infty).
		\end{cases}
	\end{align}
	For each $\delta >0$, we consider a parameter $\mathcal R(\delta) \rightarrow +\infty$ as $\delta \rightarrow 0^+$.
	Let us denote the operator $\mathbb{P} = M \Delta$, where $M$ is a positive number such that $M> 	{\bf a}_\delta^{\text{obs}}(x,t)  $ for all $(x,t) \in \Omega \times (0,T)$.
	Define the following operator
	 $$\mathbb{\bf P}_{\beta_{\bf N(\delta)} }^\delta = \mathbb{P}+\mathbb{\bf Q}_{\beta_{\bf N(\delta)} }^\delta,  $$
	where  \begin{equation}
		\mathbb{\bf Q}_{\beta_{\bf N(\delta)} }^\delta v (x)=\frac{ 1}{T} \sum_{j=1}^\infty   \ln\lf(1+\beta_{\bf N(\delta)} e^{MT \lambda_j}\rt) \big\langle v(x),\phi_j(x)\big\rangle_{L^2(\Omega)} \phi_j(x),
	\end{equation}
	for any function $v \in L^2(\Omega)$.  { Here ${\bf N}(\delta)$ is defined in Lemma \eqref{lemmawhitenoise}. }
	
 Therefore, we are going to introduce the main idea to solve the problem \eqref{parabolicproblemwhitenoise} with a  generalized case of source term defined by \eqref{df-F}, we consider the problem:
	
	\bq  \label{QR2}
	\left\{ \begin{gathered}
		\frac {\partial  {\bf u}^{\delta}_{\bf N(\delta)} }{\partial t}-\nabla\Big(	{\bf a}_\delta^{\text{obs}}(x,t)\nabla {\bf u}^\delta_{\bf N(\delta)}  \Big)-  \mathbb{\bf Q}_{\beta_{\bf N(\delta)} }^\delta  ({\bf u}^\delta_{\bf N(\delta)} ) (x,t)  \hfill \\
		~~~~~~~~~~~~~~~~~~~~~~~~~~~~~~~~~~~~~=\mathcal{F}_{R_\delta} \lf(x,t,{\bf u}^\delta_{\bf N(\delta)}  (x,t)\rt),\quad (x,t) \in \Omega \times (0,T), \hfill \\
		{\bf u}^\delta_{\bf N(\delta)} |_{\partial \Omega }=0,\quad t \in (0,T),\hfill\\
		{\bf u}^\delta_{\bf N}  (x,T)=\overline  G_{\delta, {\bf N(\delta)}}(x), \quad (x,t) \in \Omega \times (0,T),\hfill
	\end{gathered}  \right.
	\eq
Here $\overline  G_{\delta, {\bf N(\delta)}}(x)$ is defined in equation \eqref{G-delta}.
	Now, we introduce some Lemmas which will be  useful for our  main results. First,
	we  recall the abstract Gevrey class of functions of index $\sigma>0$, see, \emph{e.g.}, \cite{cao}, defined by
		\begin{align*}
		\mathcal{W}_{\sigma}=\Bigg\{v \in L^{2}\left(\Omega\right):\sum_{n=1}^{\infty}e^{2\sigma\lambda_{n}}\left|\big\langle v,\phi_{n}(x)\big\rangle_{L^2(\Omega)} \right|^{2}<\infty\Bigg\} ,
		\end{align*}
		which is a Hilbert space equipped with the inner product
		\begin{align}
		\lf<v_1,v_2\rt>_{\mathcal{W}_{\sigma}}:=\Big< e^{\sigma\sqrt{-\Delta}}v_1,e^{\sigma\sqrt{-\Delta}}v_2 \Big>_{L^2(\Omega)}, \quad \mbox{for all} ~ v_1, v_2 \in \mathcal{W}_{\sigma};\nonumber
		\end{align}
		and  corresponding norm
		$
		\left\Vert v\right\Vert _{\mathcal{W}_{\sigma}}=\sqrt{\sum_{n=1}^{\infty}e^{2\sigma\lambda_{n}}\big|\big\langle v,\phi_{n}\big\rangle_{L^2(\Omega)} \big|^{2}}<\infty.
		$

	\begin{lemma}
		\label{lem:F_est} For $\mathcal{F}_{\mathcal R}  \in L^\infty(\Omega \times [0,T] \times \mathbb{R})$, we have
		\begin{align*}
			|\mathcal{F}_{\mathcal R}(x,t;u) - \mathcal{F}_{\mathcal R}(x,t;v)| \leq K_{\mathcal R} |u-v|, ~~~ \forall (x,t)\in \Omega \times [0,T],~ u, v \in \mathbb{R}.
		\end{align*}
	\end{lemma}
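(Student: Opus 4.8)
The plan is to recognize that $\mathcal{F}_{\mathcal R}$ is nothing but $F$ precomposed with the \emph{clamping} (metric projection) map onto the interval $[-\mathcal R,\mathcal R]$, and then to exploit that this map is non-expansive. Concretely, I would define $\pi_{\mathcal R}:\mathbb{R}\to[-\mathcal R,\mathcal R]$ by $\pi_{\mathcal R}(w)=\max\{-\mathcal R,\min\{w,\mathcal R\}\}$, so that inspecting the three branches of \eqref{df-F} shows $\mathcal{F}_{\mathcal R}(x,t;w)=F(x,t;\pi_{\mathcal R}(w))$ for every $w\in\mathbb{R}$ (on $[-\mathcal R,\mathcal R]$ we have $\pi_{\mathcal R}(w)=w$, and outside it $\pi_{\mathcal R}$ returns the nearest endpoint $\pm\mathcal R$). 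The entire estimate then reduces to two elementary facts about $\pi_{\mathcal R}$.

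The first key step is to observe that $\pi_{\mathcal R}$ is $1$-Lipschitz, i.e. $|\pi_{\mathcal R}(u)-\pi_{\mathcal R}(v)|\le|u-v|$ for all $u,v\in\mathbb{R}$; this is the standard non-expansiveness of the metric projection onto a closed convex set (here the interval $[-\mathcal R,\mathcal R]$), and it can be checked directly by a short case analysis on the positions of $u$ and $v$ relative to $\pm\mathcal R$. The second key step is that $\pi_{\mathcal R}(u),\pi_{\mathcal R}(v)\in[-\mathcal R,\mathcal R]$ by construction, so $\max\{|\pi_{\mathcal R}(u)|,|\pi_{\mathcal R}(v)|\}\le\mathcal R$ and the local Lipschitz hypothesis \eqref{local-lip-F} applies to the clamped arguments. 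Chaining the two steps gives
$$|\mathcal{F}_{\mathcal R}(x,t;u)-\mathcal{F}_{\mathcal R}(x,t;v)|=|F(x,t;\pi_{\mathcal R}(u))-F(x,t;\pi_{\mathcal R}(v))|\le K_{\mathcal R}\,|\pi_{\mathcal R}(u)-\pi_{\mathcal R}(v)|\le K_{\mathcal R}\,|u-v|,$$
which is exactly the asserted global Lipschitz bound with the same constant $K_{\mathcal R}$.

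There is no serious obstacle here; the only point requiring care is the mixed configurations in which exactly one of $u,v$ lies outside $[-\mathcal R,\mathcal R]$, together with the case where $u$ and $v$ lie on opposite sides of the interval. A purely case-by-case argument, avoiding the projection viewpoint, would have to treat each of these separately and, for instance, bound $|v+\mathcal R|$ by $|u-v|$ when $u<-\mathcal R\le v$; the clamping reformulation packages all of these into the single non-expansiveness inequality, so I would present it that way to keep the proof to a few lines. Note finally that the $L^\infty$ membership of $\mathcal{F}_{\mathcal R}$ recorded in the hypothesis is not needed for the Lipschitz estimate itself and plays no role in the bound above.
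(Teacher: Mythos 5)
Your proof is correct, and it is worth noting that the paper itself gives no argument at all for this lemma: it simply points to Lemma 2.4 of the reference \cite{Tuan}, where the standard proof proceeds by a case-by-case analysis on the positions of $u$ and $v$ relative to the interval $[-\mathcal R,\mathcal R]$ (both inside, one inside and one outside, both outside on the same side, both outside on opposite sides). Your reformulation $\mathcal{F}_{\mathcal R}(x,t;w)=F\big(x,t;\pi_{\mathcal R}(w)\big)$, where $\pi_{\mathcal R}$ is the metric projection onto $[-\mathcal R,\mathcal R]$, is a genuinely cleaner route: it collapses all of those cases into the single non-expansiveness inequality $|\pi_{\mathcal R}(u)-\pi_{\mathcal R}(v)|\le|u-v|$, after which the local Lipschitz hypothesis \eqref{local-lip-F} applies verbatim because the clamped arguments lie in $[-\mathcal R,\mathcal R]$. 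This also tacitly repairs the typo in \eqref{df-F}, whose middle branch reads $F(x,t;u)$ instead of $F(x,t;w)$; your reading is the intended one, since the three branches are precisely $F$ composed with clamping. Two further points in your write-up are accurate and worth keeping: the Lipschitz constant is unchanged (the truncation cannot increase it), and the $L^\infty$ hypothesis stated in the lemma plays no role in the estimate — it is only relevant elsewhere (e.g., for integrability of the nonlinearity in the regularized problem). What the projection viewpoint buys is brevity and reusability (it works for clamping onto any closed convex set, hence in the vector-valued case too); what the case analysis buys is that it is entirely elementary and self-verifying, which is presumably why the cited reference presents it that way.
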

	
	\begin{proof}
		See the proof of  Lemma 2.4 in  \cite{Tuan}.
	\end{proof}
	
	\begin{lemma} \label{importantlemma}
		1. Let $M, T>0$. For any $v \in \mathcal{W}_{MT}(\Omega)$, we have
		\begin{equation} \label{bound Q}
			\|  	\mathbb{\bf Q}_{\beta_{\bf N(\delta)} }^\delta ( v)\|_{L^2(\Omega)} \leq  \frac{\beta_{\bf N(\delta)} }{T} \lf\|v\rt\|_{\mathcal{W}_{MT}(\Omega)}.
		\end{equation}
		2. Let  $\beta_{\bf N(\delta)}  < 1- e^{-MT \la_1}$ .  For any $v \in L^2(\Omega)$, we have
		\begin{equation} \label{bound P}
			\left\|	\mathbb{\bf P}_{\beta_{\bf N(\delta)} }^\delta v \right\|_{L^2(\Omega)} \leq \frac{1}{T} \ln\lf(\frac{1}{\beta_{\bf N(\delta)} }\rt)
			\|v\|_{L^2(\Omega)}.
		\end{equation}
	\end{lemma}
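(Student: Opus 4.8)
The plan is to prove both inequalities by passing to the eigenbasis $\{\phi_j\}$ and treating each operator as a Fourier multiplier. Since $-\Delta \phi_j = \la_j \phi_j$, both $\mathbb{\bf Q}_{\beta_{\bf N(\delta)}}^\delta$ and $\mathbb{P}=M\Delta$ act diagonally, so writing $v_j := \big\langle v,\phi_j\big\rangle_{L^2(\Omega)}$ reduces each estimate to a uniform bound on the corresponding scalar multiplier, after which Parseval finishes the argument.

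For Part 1, I would expand the definition of $\mathbb{\bf Q}_{\beta_{\bf N(\delta)}}^\delta$ to get
\[
\|\mathbb{\bf Q}_{\beta_{\bf N(\delta)}}^\delta(v)\|_{L^2(\Omega)}^2 = \frac{1}{T^2}\sum_{j=1}^\infty \lf[\ln\lf(1+\beta_{\bf N(\delta)} e^{MT\la_j}\rt)\rt]^2 v_j^2 ,
\]
and then invoke the elementary inequality $\ln(1+x)\le x$ for $x\ge 0$ with $x=\beta_{\bf N(\delta)} e^{MT\la_j}$. This bounds each bracket by $\beta_{\bf N(\delta)}^2 e^{2MT\la_j}$, and the resulting sum $\sum_j e^{2MT\la_j} v_j^2$ is exactly $\|v\|_{\mathcal{W}_{MT}(\Omega)}^2$. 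Taking square roots yields \eqref{bound Q}.

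For Part 2, the essential step is an algebraic simplification of the multiplier of $\mathbb{\bf P}_{\beta_{\bf N(\delta)}}^\delta=\mathbb{P}+\mathbb{\bf Q}_{\beta_{\bf N(\delta)}}^\delta$. On the $j$-th mode this multiplier is
\[
m_j = -M\la_j + \frac{1}{T}\ln\lf(1+\beta_{\bf N(\delta)} e^{MT\la_j}\rt).
\]
Writing $-M\la_j=\frac{1}{T}\ln\big(e^{-MT\la_j}\big)$ and combining the two logarithms, I would collapse this to the compact form
\[
m_j = \frac{1}{T}\ln\lf(e^{-MT\la_j}+\beta_{\bf N(\delta)}\rt).
\]
The hypothesis $\beta_{\bf N(\delta)} < 1-e^{-MT\la_1}$, together with $e^{-MT\la_j}\le e^{-MT\la_1}$ (since $\la_j\ge \la_1$), shows that $0<e^{-MT\la_j}+\beta_{\bf N(\delta)}<1$, so $m_j<0$; meanwhile $e^{-MT\la_j}+\beta_{\bf N(\delta)}\ge \beta_{\bf N(\delta)}$ gives $|m_j|=-m_j\le \frac{1}{T}\ln\big(1/\beta_{\bf N(\delta)}\big)$. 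Summing $m_j^2 v_j^2$ over $j$ and factoring out this uniform bound delivers \eqref{bound P}.

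The routine ingredients are the Parseval identities and the two elementary logarithm inequalities. The only genuine idea is the telescoping in Part 2 — recognizing that the unbounded diffusion multiplier $-M\la_j$ is precisely $\frac{1}{T}\ln\big(e^{-MT\la_j}\big)$, so that its sum with the regularizing logarithm produces a multiplier bounded uniformly in $j$. This cancellation is exactly what makes $\mathbb{\bf P}_{\beta_{\bf N(\delta)}}^\delta$ a bounded operator on $L^2(\Omega)$ despite containing $M\Delta$, so I expect verifying this collapse — and checking the sign and range of its argument under the smallness condition on $\beta_{\bf N(\delta)}$ — to be the one place requiring genuine care.
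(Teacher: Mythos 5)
Your proposal is correct and follows essentially the same route as the paper: Part 1 via $\ln(1+x)\le x$ and Parseval against the Gevrey norm, and Part 2 by bounding the Fourier multiplier of $\mathbb{\bf P}_{\beta_{\bf N(\delta)}}^\delta$, which on the $j$-th mode equals $\frac{1}{T}\ln\bigl(\beta_{\bf N(\delta)}+e^{-MT\la_j}\bigr)$, uniformly by $\frac{1}{T}\ln\bigl(1/\beta_{\bf N(\delta)}\bigr)$ under the smallness condition on $\beta_{\bf N(\delta)}$. The only difference is that you derive this collapsed form of the multiplier explicitly (the telescoping $-M\la_j=\frac{1}{T}\ln e^{-MT\la_j}$), whereas the paper simply writes the combined operator in that diagonal form from the outset; this is a presentational detail, not a different argument.
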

\begin{proof}
		Using the inequality $\ln (1+a) \leq a, ~ \forall a >0$, we have
		\begin{align}
		\lf\|	\mathbb{\bf Q}_{\beta_{\bf N(\delta)} }^\delta (v)\rt\|_{L^2(\Omega)}^2 &=\frac{ 1}{T^2} \sum_{j=1}^\infty   \ln^2\lf(1+\beta_{\bf N(\delta)}  e^{MT \lambda_j}\rt) \lf|\big\langle v,\phi_j\big\rangle_{L^2(\Omega)} \rt|^2\nn\\
		&\leq \frac{\beta^2_{\bf N(\delta)} }{T^2} \sum_{j=1}^\infty  e^{2MT \lambda_j}\lf|\big\langle v,\phi_j\big\rangle_{L^2(\Omega)} \rt|^2 \nn\\
		&\leq \frac{\beta^2_{\bf N(\delta)} }{T^2} \lf\|v\rt\|_{\mathcal{W}_{MT}}^2. \label{remark1}
		\end{align}
	Since $\beta_{\bf N(\delta)}  < 1- e^{-MT \la_1}$, we know that $\beta_{\bf N(\delta)} + e^{-MT \lambda_j}<1$ . Using Parseval's equality, we can easily get
		\begin{align}
		\lf\| \mathbb{\bf P}_{\beta_{\bf N(\delta)} }^\delta (v)\rt\|_{L^2(\Omega)}^2 &=\frac{ 1}{T^2} \sum_{j=1}^\infty   \ln^2\lf(\frac{1}{\beta_{\bf N(\delta)} + e^{-MT \lambda_j}}\rt) \lf|\big\langle v,\phi_j\big\rangle_{L^2(\Omega)} \rt|^2\nn\\
		&\leq \frac{1}{T^2} \ln^2\lf(\frac{1}{\beta_{\bf N(\delta)}}\rt) \sum_{j=1}^\infty  \lf|\big\langle v,\phi_j\big\rangle_{L^2(\Omega)} \rt|^2 \nn\\
		&\leq \frac{1}{T^2} \ln^2\lf(\frac{1}{\beta_{\bf N(\delta)}}\rt) \lf\|v\rt\|_{L^2(\Omega)}^2. \nn\label{remark1}
		\end{align}
\end{proof}

	\begin{theorem} \label{nonlocal1}
	The  problem \eqref{QR2} has a unique solution $$  {\bf u}^\delta_{\bf N(\delta)} \in C\left(\left[0,T\right];L^{2}\left(\Omega\right)\right).$$ Assume that the problem \eqref{parabolicproblemwhitenoise} has a unique solution $\mathbf u$ satisfying  $\mathbf u(\cdot,t) \in \mathcal{W}_{MT}$. 		Let us choose $\beta_{\bf N(\delta)}$ such that
	\begin{equation}  \label{dk}
	\lim_{ \delta \to 0}  \delta \sqrt{\bf N(\delta)} \beta_{\bf N(\delta)}^{-1} =	\lim_{ \delta \to 0 } \beta_{\bf N(\delta)}^{-1} \la_{\bf N(\delta)}^{-\gamma}  =	\lim_{ \delta \to 0 } \beta_{\bf N(\delta)}= 0.
	\end{equation}
	 Let us choose $\mathcal{R}_\delta$ such that
	\begin{equation}
	\lim_{\delta \to 0} \beta_{\bf N(\delta)}^{\frac{2t}{T}} e^{ 2K{\mathcal R}_\delta  T}=0,~~t>0. \label{assumption1}
	\end{equation}
	Then we have the following estimate
		\begin{align} \label{error2}
		{\bf E} \lf\|{\bf u}^\delta_{\bf N(\delta)}(x,t)-\textbf{u}(x,t) \rt\|^2_{L^2(\Omega)}
		\leq \beta_{\bf N(\delta)}^{\frac{2t}{T}} e^{( 2K({\mathcal R}_\delta) ) +1 )T} 	\widetilde C (\delta).
		\end{align}
		Here $\widetilde C (\delta)$ is
		\[
		\widetilde C (\delta)=\delta^2 {\bf N}(\delta)  \beta^{-2}_{\bf N_\delta}  + \frac{1}{\la_{\bf N(\delta)}^{2\gamma}  \beta^2_{\bf N_\delta}  } \|g\|_{H^{2\gamma}(\Omega)}
		+ \|\textbf{u}\|_{C\lf([0,T];\mathcal W_{MT}(\Omega)\rt)}^2 +  \frac{\delta^2 T^3 }{b_0 \beta^2_{\bf N_\delta} } \lf\| \textbf{u}  \rt\|_{L^\infty(0,T;\mathcal H_0^1(\Omega))}^2.
		\]
	\end{theorem}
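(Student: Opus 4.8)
The plan is to prove the two assertions separately and, in both, to reduce the problem to a standard forward semilinear parabolic equation by a time reversal. The key algebraic observation is that, writing $c_\delta:={\bf a}_\delta^{\text{obs}}$'s complement $M-{\bf a}_\delta^{\text{obs}}\ge b_0>0$ (with $b_0$ the ellipticity lower bound appearing in $\widetilde C(\delta)$) and recalling $\mathbb{P}=M\Delta$, one has
\[
\nabla\big({\bf a}_\delta^{\text{obs}}\nabla v\big)+\mathbb{\bf Q}_{\beta_{\bf N(\delta)}}^\delta v=\mathbb{\bf P}_{\beta_{\bf N(\delta)}}^\delta v-\nabla\big(c_\delta\nabla v\big).
\]
By Lemma \ref{importantlemma}(2) the operator $\mathbb{\bf P}_{\beta_{\bf N(\delta)}}^\delta$ is bounded on $L^2(\Omega)$ with norm at most $\tfrac1T\ln(1/\beta_{\bf N(\delta)})$, while $-\nabla(c_\delta\nabla\cdot)$ is uniformly elliptic. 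Hence, after the substitution $\tau=T-t$, \eqref{QR2} becomes a forward parabolic problem with a uniformly elliptic principal part, a bounded zeroth--order perturbation, and the globally Lipschitz source $\mathcal F_{\mathcal R_\delta}$ (Lemma \ref{lem:F_est}); existence and uniqueness of ${\bf u}^\delta_{\bf N(\delta)}\in C([0,T];L^2(\Omega))$ then follow from the analytic semigroup generated by $\nabla(c_\delta\nabla\cdot)$ together with a Banach fixed--point argument on the mild formulation, exactly as in the constant--coefficient case.

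For the error estimate I set $w:={\bf u}^\delta_{\bf N(\delta)}-{\bf u}$. Subtracting the equation satisfied by the exact solution $\bf u$ from \eqref{QR2} and using ${\bf a}_\delta^{\text{obs}}=a+\delta\psi$ gives
\[
w_t=\mathbb{\bf P}_{\beta_{\bf N(\delta)}}^\delta w-\nabla(c_\delta\nabla w)+\big[\mathcal F_{\mathcal R_\delta}({\bf u}^\delta_{\bf N(\delta)})-\mathcal F_{\mathcal R_\delta}({\bf u})\big]+\mathcal G,
\]
with residual $\mathcal G=\delta\,\nabla(\psi\nabla{\bf u})+\mathbb{\bf Q}_{\beta_{\bf N(\delta)}}^\delta{\bf u}+\big[\mathcal F_{\mathcal R_\delta}({\bf u})-F({\bf u})\big]$ and final datum $w(\cdot,T)=\overline G_{\delta,{\bf N(\delta)}}-g$. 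Reversing time and testing the resulting equation with $w$, I would use: the bound $|\langle\mathbb{\bf P}_{\beta_{\bf N(\delta)}}^\delta w,w\rangle|\le\tfrac1T\ln(1/\beta_{\bf N(\delta)})\|w\|^2$ from Lemma \ref{importantlemma}(2); the Lipschitz bound $K_{\mathcal R_\delta}\|w\|^2$ from Lemma \ref{lem:F_est}; and the strictly dissipative term $-\langle\nabla(c_\delta\nabla w),w\rangle=\int_\Omega c_\delta|\nabla w|^2\ge b_0\|\nabla w\|^2$, which is precisely what is needed to absorb the second--order residual after an integration by parts,
\[
\big|\delta\langle\nabla(\psi\nabla{\bf u}),w\rangle\big|=\delta\big|\langle\psi\nabla{\bf u},\nabla w\rangle\big|\le b_0\|\nabla w\|^2+\frac{\delta^2\|\psi\|_\infty^2}{4b_0}\|{\bf u}\|_{H_0^1(\Omega)}^2.
\]

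The nonlocal residual $\mathbb{\bf Q}_{\beta_{\bf N(\delta)}}^\delta{\bf u}$ is controlled by Lemma \ref{importantlemma}(1), $\|\mathbb{\bf Q}_{\beta_{\bf N(\delta)}}^\delta{\bf u}\|\le\tfrac{\beta_{\bf N(\delta)}}{T}\|{\bf u}\|_{\mathcal W_{MT}}$, and $\mathcal F_{\mathcal R_\delta}({\bf u})-F({\bf u})$ vanishes once $\mathcal R_\delta$ exceeds $\|{\bf u}\|_{L^\infty}$, which holds for $\delta$ small by \eqref{assumption1}. Collecting these bounds yields, for $\tau=T-t$, the differential inequality
\[
\frac{d}{d\tau}\|w\|^2\le\Big(\tfrac{2}{T}\ln\tfrac1{\beta_{\bf N(\delta)}}+2K_{\mathcal R_\delta}+1\Big)\|w\|^2+\frac{\delta^2\|\psi\|_\infty^2}{2b_0}\|{\bf u}\|_{H_0^1}^2+\frac{\beta_{\bf N(\delta)}^2}{T^2}\|{\bf u}\|_{\mathcal W_{MT}}^2.
\]
Applying Gronwall's inequality on $[0,T-t]$ produces the amplification factor $e^{(\frac2T\ln(1/\beta_{\bf N(\delta)})+2K_{\mathcal R_\delta}+1)(T-t)}=\beta_{\bf N(\delta)}^{-2(T-t)/T}e^{(2K_{\mathcal R_\delta}+1)(T-t)}$, whose leading part is $\beta_{\bf N(\delta)}^{-2}\,\beta_{\bf N(\delta)}^{2t/T}$; this is exactly the source of the prefactor $\beta_{\bf N(\delta)}^{2t/T}e^{(2K(\mathcal R_\delta)+1)T}$ in \eqref{error2}. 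Taking expectation, inserting Lemma \ref{lemmawhitenoise} (${\bf E}\|\overline G_{\delta,{\bf N(\delta)}}-g\|^2\le\delta^2{\bf N}(\delta)+\lambda_{\bf N(\delta)}^{-2\gamma}\|g\|_{H^{2\gamma}}$) for the final datum, and ${\bf E}\|\psi\|_\infty^2<\infty$ for the coefficient--noise term, recovers the four contributions of $\widetilde C(\delta)$. I expect the main obstacle to be the second--order residual $\delta\nabla(\psi\nabla{\bf u})$: it is an unbounded perturbation that cannot be estimated in $L^2$ alone, so the whole scheme hinges on keeping the dissipative quantity $\int_\Omega c_\delta|\nabla w|^2$ available to absorb its gradient, which is why the uniform lower bound $b_0$ on $M-{\bf a}_\delta^{\text{obs}}$ and the hypothesis ${\bf u}\in L^\infty(0,T;H_0^1)$ enter the final bound. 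A secondary point is checking that $\mathcal R_\delta$ can be taken large enough that $\mathcal F_{\mathcal R_\delta}({\bf u})=F({\bf u})$ while still satisfying \eqref{assumption1}.
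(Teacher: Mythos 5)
Your proposal is correct and follows essentially the same route as the paper's proof: a time reversal reducing \eqref{QR2} to a forward semilinear problem whose principal part $-\nabla(\mathbf{b}_\delta^{\text{obs}}\nabla\cdot)$ is uniformly elliptic, perturbed by the bounded operator $\mathbb{P}_{\beta_{\bf N(\delta)}}^\delta$ and a Lipschitz source (Lemmas \ref{importantlemma} and \ref{lem:F_est}) for existence and uniqueness, followed by the identical energy argument for the error --- Lemma \ref{importantlemma} for the $\mathbb{P}$ and $\mathbb{Q}$ contributions, Green's formula with $b_0$-weighted Young absorption for the residual $\delta\nabla(\psi\nabla\mathbf{u})$, the identity $\mathcal{F}_{\mathcal{R}_\delta}(\mathbf{u})=F(\mathbf{u})$ for small $\delta$, Lemma \ref{lemmawhitenoise} for the final datum, and Gronwall producing the amplification $\beta_{\bf N(\delta)}^{-2(T-t)/T}e^{(2K(\mathcal{R}_\delta)+1)(T-t)}$. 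The only cosmetic differences are that the paper runs the integral form of Gronwall on the weighted variable $\mathbf{V}^\delta_{\bf N(\delta)}=e^{\rho_\delta(t-T)}\big(\mathbf{u}^\delta_{\bf N(\delta)}-\mathbf{u}\big)$ with $\rho_\delta=\frac{1}{T}\ln\big(1/\beta_{\bf N(\delta)}\big)$ rather than a differential inequality for the unweighted difference, and bounds the Brownian contribution via ${\bf E}|\psi(s)|^2=s$ under the time integral instead of ${\bf E}\|\psi\|_\infty^2$, which only changes inessential constants.
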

	
		\begin{remark}
			 1. Under the asumption \eqref{assumption1}, the right hand side of equation  \eqref{error2} converges to zero when $t>0$. \\
		 2. Let us choose
		  $\beta_{\bf N(\delta)}= {\bf N(\delta)}^{-c}$ for any $0<c<  \min(\frac{1}{2}, \frac{2\gamma}{d})$. And ${\bf N(\delta)}$ is chosen as follows
			\begin{equation}
			{\bf N(\delta)}= \left(  \frac{1}{\delta} \right)^{m(\frac{1}{2}-c)},~~0<m<1.
			\end{equation}
			Let us choose $\mathcal R_\delta$ such that
		\[
	K\left( 	{\mathcal R}_\delta \right) \le \frac{1}{kT}  \ln \Big( \ln \left( 	{\bf N(\delta)} \right) \Big)=\frac{1}{kT}  \ln \Big(  m(\frac{1}{2}-c  ) \ln \left(  \frac{1}{\delta}	\right) \Big).
		\]
{ Then $	{\bf E} \lf\|{\bf u}^\delta_{\bf N(\delta)}(x,t)-\textbf{u}(x,t) \rt\|^2_{L^2(\Omega)}$ is of order
	\[
	\delta^{ m c(\frac{1}{2}-c)\frac{t}{T} } \ln (\frac{1}{\delta}).
	\]
	}
		\end{remark}

	\begin{proof}[{\bf Proof of Theorem \ref{nonlocal1}}]
	The proof is divided into two Steps. \\
		{\bf Step 1.       The existence and uniqueness of  the solution to the regularized problem \eqref{QR2}.}\\
		Let $b(x,t)$ be defined by $b(x,t)=M-a(x,t)$. It is obvious that $ 0<b(x,t)<M$.
		Then from \eqref{QR2}, we obtain
		\begin{align}
		\frac {\partial  {\bf u}^\delta_{\bf N(\delta)} }{\partial t}+\nabla\Big(b(x,t)\nabla {\bf u}^\delta_{\bf N(\delta)}  \Big)&=F\lf(x,t,{\bf u}^\delta_{\bf N(\delta)}(x,t)\rt)\nn\\ &-\frac{1}{T}\sum_{j=1}^\infty \ln\lf(\frac{1}{\beta_{\bf N(\delta)}+ e^{-MT \lambda_j}}\rt) \big< {\bf u}^\delta_{\bf N(\delta)} (\cdot,t), \phi_j \big> \phi_j(x),
		\end{align}
		for $(x,t) \in \Omega\times (0,T).$\\
		Let ${\bf v}^\delta_{\bf N(\delta)} $ be the function defined by ${\bf v}^\delta_{\bf N(\delta)}(x,t)={\bf u}^\delta_{\bf N(\delta)}(x,T-t).$
		Then we have $$\frac {\partial  {\bf v}^\delta_{\bf N(\delta)} }{\partial t} (x,t) =-\frac {\partial  {\bf u}^\delta_{\bf N(\delta)} }{\partial t}(x,T-t),~\nabla\Big(b(x,t)\nabla {\bf v}^\delta_{\bf N(\delta)} \Big)(x,t) =\nabla\Big(b(x,t)\nabla {\bf u}^\delta_{\bf N(\delta)}\Big)(x,T-t)$$ and
		\begin{align*}
		&\frac{1}{T}\sum_{j=1}^\infty  \ln\lf(\beta_{\bf N(\delta)}+ e^{-MT \lambda_j}\rt) \big< {\bf v}^\delta_{\bf N(\delta)}(x,t), \phi_j(x) \big> \phi_j(x)\nn\\
		&\quad \quad \quad \quad \quad  =\frac{1}{T}\sum_{j=1}^\infty  \ln\lf(\beta_{\bf N(\delta)}+ e^{-MT \lambda_j}\rt) \big< {\bf u}^\delta_{\bf N(\delta)}(x,T-t), \phi_j(x) \big> \phi_j(x).
		\end{align*}
		This implies that  ${\bf v}^\delta_{\bf N(\delta)}$ satisfies the problem
		\bq
		\left\{ \begin{gathered}
		\frac {\partial  {\bf v}^\delta_{\bf N(\delta)} }{\partial t}  -\nabla\Big(b(x,t)\nabla {\bf v}^\delta_{\bf N(\delta)} \Big)=\mathcal{G}(x,t,{\bf v}^\delta_{\bf N} (x,t)) , \quad (x,t) \in \Omega \times (0,T), \hfill \\
		{\bf v}^\delta_{\bf N(\delta)}|_{\partial \Omega}=0, \quad t \in (0,T), \hfill\\
		{\bf v}^\delta_{\bf N(\delta)}(x,0)=\overline  G_{\delta, {\bf N(\delta)}}(x),\quad (x,t) \in \Omega \times (0,T),\hfill  \label{t8}
		\end{gathered}  \right.
		\eq
		where $\mathcal{G}$ is defined by
		\begin{align}
		\mathcal{G}(x,t,v(x,t))&=- F(x,t,v(x,t))\nn\\
		& +\frac{1}{T}\sum_{j=1}^\infty \ln\lf(\frac{1}{\beta_{\bf N(\delta)}+ e^{-MT \lambda_j}}\rt) \big< v(\cdot,t),\phi_j \big>_{L^2(\Omega)} \phi_j(x),
		\end{align}
		for any  $v \in C\lf([0,T];L^2(\Omega)\rt)$. \\
		Since $$\beta_{\bf N(\delta) } \in \lf(0, 1-e^{-MT\la_1}\rt),~~~~~~ 0<\ln\lf(\frac{1}{\beta_{\bf N(\delta)}+ e^{-MT \lambda_n}}\rt) < \ln\lf(\frac{1}{\beta_{\bf N(\delta)}}\rt)$$ and using  Parseval's identity, we obtain for any $v_1, v_2 \in L^2(\Omega)$
		\begin{align}
		&\|\mathcal{G}(\cdot,t,v_1(\cdot,t))-\mathcal{G}(\cdot,t,v_2(\cdot,t)) \|_{L^2(\Omega)} \nn\\
		&\quad \quad \quad \quad \quad \le \|F(\cdot,t,v_1(\cdot,t))-F(\cdot,t,v_2(\cdot,t)) \|_{L^2(\Omega)} \nn\\
		&\quad \quad \quad \quad \quad +\Big\|\frac{1}{T}\sum_{j=1}^\infty \ln\lf(\frac{1}{\beta_{\bf N(\delta)}+ e^{-MT \lambda_j}}\rt) \big< v_1(x,t)-v_2(x,t),\phi_j(x) \big>_{L^2(\Omega)} \phi_j(x) \Big\|_{L^2(\Omega)} \nn\\
		&\quad \quad \quad \quad \quad\le K \|v_1(\cdot,t)-v_2(\cdot,t) \|_{L^2(\Omega)} \nn\\
		&\quad \quad \quad \quad \quad +\frac{1}{T}\sqrt{ \sum_{j=1}^\infty \ln^2\lf(\frac{1}{\beta_{\bf N(\delta)}+ e^{-MT \lambda_j}}\rt) \big| \big< v_1(\cdot,t)-v_2(\cdot,t),\phi_n \big>_{L^2(\Omega)}\big|^2 }\nn\\
		&\quad \quad \quad \quad \quad\le \lf[ K+ \frac{1}{T}\ln\lf(\frac{1}{\beta_{\bf N(\delta)}}\rt) \rt]~\|v_1(\cdot,t)-v_2(\cdot,t) \|_{L^2(\Omega)} .
		\end{align}
		
		So $\mathcal{G}$ is a Lipschitz function. Using the results of Theorem 12.2  in  \cite{chipot}, we complete the proof of Step 1. \\
		
		{\bf Step 2. Error estimate} \\
	 We pass to the error estimate between the regularized solution of problem \eqref{QR2} and the exact solution of problem \eqref{parabolicproblemwhitenoise}. \\
		For $(x,t) \in \Omega \times (0,T)$, we begin by establishing that the functions $b(x,t), 	{\bf b}_\delta^{\text{obs}}(x,t) $ satisfy $$0 < b(x,t) \leq M,~ 0 < b_0\leq 	{\bf b}_\delta^{\text{obs}}(x,t) \leq M$$ and
		\bq \label{subtract}
		{a(x,t) \choose 	{\bf a}_\delta^{\text{obs}}(x,t)} = {M \choose M} - {b(x,t) \choose 	{\bf b}_\delta^{\text{obs}}(x,t)}, ~\forall (x,t) \in \Omega \times (0,T).
		\eq
		The functions $ {\bf u}^\delta_{\bf N(\delta)} (x,t)$ and $\textbf{u}(x,t)$ solve the following equations
		\begin{align}
			\frac{\partial {\bf u}}{\partial t} + \nabla \lf(	{\bf b}_\delta^{\text{obs}}(x,t)\nabla  \textbf{u}\rt)&=F(x,t;{\bf u}(x,t))\nn\\
			& + \nabla \Big(\big(	{\bf b}_\delta^{\text{obs}}(x,t)- b(x,t)\big)\nabla  {\bf u}\Big) +  \mathbb{P}\textbf{u}
			\end{align}
			and
			\begin{align}
			\frac{\partial  {\bf u}^\delta_{\bf N(\delta)} }{\partial t} + \nabla \Big(	{\bf b}_\delta^{\text{obs}}(x,t)\nabla   {\bf u}^\delta_{\bf N(\delta)}  \Big)&= \mathcal{F}_{R_\delta} \lf(x,t,{\bf u}^\delta_{\bf N(\delta)} (x,t)\right)
			+ \mathbb{\bf P}_{\beta_{\bf N(\delta)} }^\delta  {\bf u}^\delta_{\bf N(\delta)}.
		\end{align}
		For $\rho_\delta>0$, we put $$ \mathbf{V}^\delta_{\bf N(\delta)} (x,t)=e^{\rho_\delta (t-T)}\lf[ {\bf u}^\delta_{\bf N(\delta)}(x,t)-\textbf{u}(x,t)\rt].$$ Then for $ (x,t) \in \Omega\times (0,T)$
		\begin{align}\label{eq-v-delta}
			\frac{\partial \mathbf{V}^\delta_{\bf N(\delta)}}{\partial t} &+ \nabla \Big({\bf b}_\delta^{\text{obs}}(x,t)\nabla  \mathbf{V}^\delta_{\bf N(\delta)}\Big)-\rho_\delta \mathbf{V}^\delta_{\bf N(\delta)} \nn\\
			&=
		 \mathbb{\bf P}_{\beta_{\bf N(\delta)} }^\delta   \mathbf{V}^\delta_{\bf N(\delta)}
			+ e^{\rho_\delta (t-T)}  \mathbb{\bf Q}_{\beta_{\bf N(\delta)} }^\delta \textbf{u} - e^{\rho_\delta(t-T)}\nabla \lf(\big(	{\bf b}_\delta^{\text{obs}}(x,t)- b(x,t)\big)\nabla  \textbf{u}\rt) \nn\\
			&\quad + e^{\rho_\delta(t-T)}\lf[\mathcal{F}_{R_\delta} \lf(x,t,{\bf u}^\delta_{\bf N(\delta)} (x,t)\right)- F\big(x,t;\textbf{u}(x,t)\big)\rt],
		\end{align}
		and $$\mathbf{V}^\delta_{\bf N(\delta)}|_{\partial \Omega}=0, ~ \mathbf{V}^\delta_{\bf N(\delta)}(x,T)=\overline  G_{\delta, {\bf N(\delta)}}(x)-g(x) .$$
		By taking the inner product of the two sides of equation \eqref{eq-v-delta} with $\mathbf{V}^\delta_{\bf N(\delta)}$ and noting the  equality $$\int_\Omega \nabla \Big(	{\bf b}_\delta^{\text{obs}}(x,t)\nabla  \mathbf{V}^\delta_{\bf N(\delta)} \Big) \mathbf{V}^\delta_{\bf N(\delta)} dx=-\int_\Omega {\bf b}_\delta^{\text{obs}} (x,t) |\nabla  \mathbf{V}^\delta_{\bf N(\delta)} |^2 dx,$$ we obtain
		\begin{align} \label{3J}
		  &\|\mathbf{V}^\delta_{\bf N(\delta)}(\cdot, T)\|^2_{L^2(\Omega)} - \|\mathbf{V}^\delta_{\bf N(\delta)}(\cdot, t)\|^2_{L^2(\Omega)} \nn\\
		  &~~~~~~~~~~~~~~~~~-2 \int_t^T \int_\Omega  {\bf b}_\delta^{\text{obs}} (x,s) |\nabla  \mathbf{V}^\delta_{\bf N(\delta)} |^2 dxds-2\rho_\delta \int_t^T \|\mathbf{V}^\delta_{\bf N(\delta)}(\cdot, s)\|^2_{L^2(\Omega)}ds \nn\\
		 &~~~~~~~~~~~~~~~~~= \underbrace{ 2\int_t^T  \lf\langle \mathbb{\bf P}_{\beta_{\bf N(\delta)} }^\delta  \mathbf{V}^\delta_{\bf N(\delta)}, \mathbf{V}^\delta_{\bf N(\delta)} \rt\rangle_{L^2(\Omega)} ds}_{=:\widetilde{A_4}} + \underbrace{2 \int_t^T  \lf\langle e^{\rho_\delta(t-T)}  \mathbb{\bf Q}_{\beta_{\bf N(\delta)} }^\delta  \textbf{u}, \mathbf{V}^\delta_{\bf N(\delta)} \rt\rangle_{L^2(\Omega)}ds}_{=:\widetilde{A_5}}\nn\\
		 &~~~~~~~~~~~~~~~~~ + \underbrace{2\int_t^T \lf\langle - e^{\rho_\delta(t-T)}\nabla  \Big(({\bf b}_\delta^{\text{obs}} (x,t)- b(x,t))\nabla  \textbf{u}\Big), \mathbf{V}^\delta_{\bf N(\delta)} \rt\rangle_{L^2(\Omega)}ds}_{=:\widetilde{A_6}} \nn\\
			&~~~~~~~~~~~~~~~~~+\underbrace{2\int_t^T \lf\langle e^{\rho_\delta(t-T)}\lf[\mathcal{F}_{R_\delta} \lf(x,t,{\bf u}^\delta_{\bf N(\delta)} (x,t)\right)- F\big(x,t;\textbf{u}(x,t)\big)\rt], \mathbf{V}^\delta_{\bf N(\delta)} \rt\rangle_{L^2(\Omega)}ds}_{=:\widetilde{A_7}}.
		\end{align}
		First, thanks to inequality \eqref{bound P}, the expectation of  $\widetilde{A_4}$ is estimated as follows
		\begin{align} \label{J1}
		{\bf E}	\big|\widetilde{A_4} \big| &\leq \frac{2}{T} \ln\lf(\frac{1}{\beta_{\bf N_\delta}}\rt)  \int_t^T 	{\bf E}	  \| \mathbf{V}^\delta_{\bf N(\delta)}(\cdot,s) \|_{L^2(\Omega)}^2ds,
		\end{align}
		Next, using  the inequality  \eqref{bound Q} and the H\"older inequality, we have
		\begin{align} \label{J2}
			{\bf E} 	\big|\widetilde{A_5}\big|
			& \leq \int_t^T  e^{2\rho_\beta(s-T)} \frac{\beta_{\bf N_\delta}}{T} \|\textbf{u}\|_{C\lf([0,T];\mathcal{W}_{MT}\rt)}^2 ds   +  \int_t^T 	{\bf E}	 \| \mathbf{V}^\delta_{\bf N(\delta)}(\cdot,s) \|_{L^2(\Omega)}^2ds \nn\\
			&\leq \frac{\beta_{\bf N_\delta}}{T} \|\textbf{u}\|_{C\lf([0,T];\mathcal{W}_{MT}\rt)}^2   + \int_t^T  	{\bf E}	 \| \mathbf{V}^\delta_{\bf N(\delta)}(\cdot,s) \|_{L^2(\Omega)}^2ds,
		\end{align}
	For estimating the expectation of $\big|\widetilde{A_6}\big| $, we use the Green's formula to get the equality
	\[
	\lf\langle  \nabla  \Big(({\bf b}_\delta^{\text{obs}} (x,t)- b(x,t))\nabla  \textbf{u}\Big), \mathbf{V}^\delta_{\bf N(\delta)} \rt\rangle_{L^2(\Omega)} = \lf\langle    \Big(({\bf b}_\delta^{\text{obs}} (x,t)- b(x,t)\Big)  \nabla  \textbf{u}, \nabla \mathbf{V}^\delta_{\bf N(\delta)} \rt\rangle_{L^2(\Omega)}
	\]
	 then using H\"older's inequality and noting the fact that
	\[
	\int_\Omega |\nabla  \textbf{u}(.,s)|^2dx \le  \lf\| \textbf{u} \rt\|_{L^\infty\lf(0,T;\mathcal H_0^1(\Omega)\rt)}^2= \sup_{0 \le s \le T} 	\int_\Omega |\nabla  \textbf{u}(.,s)|^2dx,
	\]
	  we obtain
		\begin{align} \label{J3}
		{\bf E}	\big|\widetilde{A_6}\big|
			&=2 {\bf E}  \lf| \int_t^T \lf\langle e^{\rho_\delta (s-T)}  \Big(({\bf b}_\delta^{\text{obs}} (x,t)- b(x,t)\Big)  \nabla  \textbf{u}, \nabla \mathbf{V}^\delta_{\bf N(\delta)} \rt\rangle_{L^2(\Omega)} ds\rt| \nn\\
			&\leq {\bf E}  \int_t^T  \frac{ e^{2\rho_\delta(s-T)}}{b_0} \int_{\Omega} \Big(({\bf b}_\delta^{\text{obs}} (x,t)- b(x,t)\Big)^2\lf| \nabla \textbf{u} (x,t) \rt|^2 dx ds+ {\bf E}	   \int_t^T  \int_{\Omega} b_0 \lf|\nabla  \mathbf{V}^\delta_{\bf N(\delta)}\rt|^2 dxds\nn\\
			&=\frac{ \delta^2 \int_t^T {\bf E } |\psi(s)|^2 ds \int_\Omega |\nabla  \textbf{u}(.,s)|^2dx  }{b_0}  + {\bf E}	 \int_t^T  \int_{\Omega} b_0 \lf|\nabla  \mathbf{V}^\delta_{\bf N(\delta)}\rt|^2 dxds \nn\\
			&\le \frac{ \delta^2 T^2 }{2b_0} \lf\| \textbf{u} \rt\|_{L^\infty\lf(0,T;\mathcal H_0^1(\Omega)\rt)}^2  +{\bf E}	  \int_t^T  \int_{\Omega} b_0 \lf|\nabla  \mathbf{V}^\delta_{\bf N(\delta)}\rt|^2 dxds.
		\end{align}
		Here in the last inequality, we have used the fact that ${\bf E } |\psi(s)|^2 =s $ since $\psi$ is Brownian motion.
		Finally, since $\lim_{\delta  \rightarrow 0^{+}} {\mathcal R}_\delta=+\infty$, for a sufficiently small $\delta >0$, there is an ${\mathcal R}_\delta>0$ such that $${\mathcal R}_\delta \geq \|\textbf{u}\|_{L^\infty([0,T];L^2(\Omega))}.$$ For this value of ${\mathcal R}_\delta$ we have $$\mathcal{F}_{{\mathcal R}_\delta}\lf(x,t;\textbf{u}(x,t)\rt)=F\lf(x,t;\textbf{u}(x,t)\rt).$$ Using the global Lipschitz property of $
		\mathcal{F}_{{\mathcal R}}$ (see Lemma \ref{lem:F_est}), one obtains  similarly the estimate
		\begin{align} \label{J4}
			{\bf E} \big|\widetilde{A_7}\big|&= 2 {\bf E}\Big| \int_t^T \lf\langle e^{\rho_\delta(t-T)}\lf[\mathcal{F}_{R_\delta} \lf(x,t,{\bf u}^\delta_{\bf N(\delta)} (x,t)\right)- F\big(x,t;\textbf{u}(x,t)\big)\rt], \mathbf{V}^\delta_{\bf N(\delta)} \rt\rangle_{L^2(\Omega)}ds\Big|\nn\\
			& \leq 2 {\bf E} \int_t^T  \lf\| e^{\rho_\delta(t-T)}\lf[\mathcal{F}_{R_\delta} \lf(x,s,{\bf u}^\delta_{\bf N(\delta)} (x,s)\right)- F\big(x,s;\textbf{u}(x,s)\big)\rt]  \rt\|_{L^2(\Omega)}~\|\mathbf{V}^\delta_{\bf N(\delta)}(\cdot,s)\|_{L^2(\Omega)}ds \nn\\
			&\leq  2 K({\mathcal R}_\delta)  \int_t^T  	{\bf E}	 \| \mathbf{V}^\delta_{\bf N(\delta)}(\cdot,s) \|_{L^2(\Omega)}^2ds .
		\end{align}
		Combining  \eqref{3J}, \eqref{J1}, \eqref{J2},\eqref{J3} and \eqref{J4}, we obtain
	
		\begin{align}
		{\bf E} \|\mathbf{V}^\delta_{\bf N(\delta)}(\cdot, T)\|^2_{L^2(\Omega)} &- {\bf E} \|\mathbf{V}^\delta_{\bf N(\delta)}(\cdot, t)\|^2_{L^2(\Omega)}\nn\\
		& + \int_t^T \lf( \frac{\beta_{\bf N_\delta}}{T} \|\textbf{u}\|_{C\lf([0,T];\mathcal{W}_{MT}\rt)}^2   +  \frac{ \delta^2 T^2 }{2b_0} \lf\| \textbf{u} \rt\|_{L^\infty\lf(0,T;\mathcal H_0^1(\Omega)\rt)}^2   \rt) ds \nn\\
			&\geq 2 	{\bf E}  \int_t^T \int_\Omega \left( {\bf b}_\delta^{\text{obs}} (x,s)-b_0\right) |\nabla  \mathbf{V}^\delta_{\bf N(\delta)} |^2 dxds  \nn\\
			&\quad +	{\bf E}  \int_t^T \lf(2\rho_\delta- \frac{2}{T} \ln\lf(\frac{1}{\beta_{\bf N_\delta}}\rt) - 2K({\mathcal R}_\delta) -1 \rt)\|\mathbf{V}^\delta_{\bf N(\delta)} (\cdot,s) \|^2_{L^2(\Omega)} ds\nn\\
			&\geq 	{\bf E}  \int_t^T \lf(2\rho_\delta- \frac{2}{T} \ln\lf(\frac{1}{\beta_{\bf N_\delta}}\rt) - 2K({\mathcal R}_\delta) -1 \rt)\|\mathbf{V}^\delta_{\bf N(\delta)} (\cdot,s) \|^2_{L^2(\Omega)} ds.
		\end{align}
		Whereupon,
		\begin{align}
		{\bf E} \|\mathbf{V}^\delta_{\bf N(\delta)}(\cdot, t)\|^2_{L^2(\Omega)} & \leq {\bf E} \| \overline  G_{\delta, {\bf N(\delta)}}-g \|^2_{L^2(\Omega)}\nn\\
		&+  \beta_{\bf N_\delta}  \|\textbf{u}\|_{C\lf([0,T];\mathcal W_{MT}(\Omega)\rt)}^2 +  \frac{\delta^2 T^3 }{b_0} \lf\| \textbf{u}  \rt\|_{L^\infty(0,T;\mathcal H_0^1(\Omega))}^2 \nn\\
			&\quad + 	{\bf E}  \int_t^T \lf(-2\rho_\delta+ \frac{2}{T} \ln\lf(\frac{1}{\beta_{\bf N_\delta}}\rt) + 2K({\mathcal R}_\delta) +1 \rt)\|\mathbf{V}^\delta_{\bf N(\delta)} (\cdot,s) \|^2_{L^2(\Omega)} ds.
		\end{align}
		Since $$\mathbf{V}^\delta_{\bf N(\delta)}(x,t)=e^{\rho_\delta(t-T)} \Big( {\bf u}^\delta_{\bf N(\delta)}(x,t)-\textbf{u}(x,t)\Big) $$  and applying Lemma \eqref{lemmawhitenoise},  we observe that
		\begin{align}
			&e^{2\rho_\delta(t-T)} {\bf E} \lf\|{\bf u}^\delta_{\bf N(\delta)}(\cdot,t)-\textbf{u}(\cdot,t) \rt\|^2_{L^2(\Omega)}
			\leq \delta^2 {\bf N}(\delta)  + \frac{1}{\la_{\bf N(\delta)}^{2\gamma}} \|g\|_{H^{2\gamma}(\Omega)}   \nn\\
			&~~~~~~~~~~~~~~~~~~~~~~~~~~~~~~~~+ \beta_{\bf N_\delta}  \|\textbf{u}\|_{C\lf([0,T];\mathcal W_{MT}(\Omega)\rt)}^2 +  \frac{\delta^2 T^3 }{b_0} \lf\| \textbf{u}  \rt\|_{L^\infty(0,T;\mathcal H_0^1(\Omega))}^2
			\nn\\
			&~~~~~~~~~~~~~~~~~~~~~~~~~~~~~~~~+ \lf( 2K({\mathcal R}_\delta)  +1 \rt) \int_t^T  	e^{2\rho_\delta(s-T)} {\bf E} \lf\|{\bf u}^\delta_{\bf N(\delta)}(\cdot,s)-\textbf{u}(\cdot,s) \rt\|^2_{L^2(\Omega)}  ds. \label{a}
		\end{align}
		Gronwall's lemma allows us to obtain	\begin{align}
	&	e^{2\rho_\delta(t-T)} {\bf E} \lf\|{\bf u}^\delta_{\bf N(\delta)}(x,t)-\textbf{u}(x,t) \rt\|^2_{L^2(\Omega)} \nn\\
	&		\leq \left[\delta^2 {\bf N}(\delta)  + \frac{1}{\la_{\bf N(\delta)}^{2\gamma}} \|g\|_{H^{2\gamma}(\Omega)}
			+ \beta_{\bf N_\delta}  \|\textbf{u}\|_{C\lf([0,T];\mathcal W_{MT}(\Omega)\rt)}^2 +  \frac{\delta^2 T^3 }{b_0} \lf\| \textbf{u}  \rt\|_{L^\infty(0,T;\mathcal H_0^1(\Omega))}^2  \right] e^{( 2K({\mathcal R}_\delta) ) +1 )(T-t)}.
		\end{align}
		By choosing $\rho_\delta= \frac{1}{T} \ln\lf(\frac{1}{\beta_{\bf N_\delta}}\rt) >0$ we have
		\begin{align} \label{resulproof-a-F_lip}
			 {\bf E} \lf\|{\bf u}^\delta_{\bf N(\delta)}(\cdot,t)-\textbf{u}(\cdot,t) \rt\|^2_{L^2(\Omega)}
			 \leq \beta_{\bf N(\delta)}^{\frac{2t}{T}} e^{( 2K({\mathcal R}_\delta) ) +1 )T} 	\widetilde C (\delta).
		\end{align}
		The proof of Theorem \ref{nonlocal1} is complete.
	\end{proof}
	
	\section{Regularization result  with more general source term}\label{section6}
	In most of the  previous  works on backward nonlinear problem the assumption, that the source is  global or locally Lipschitz, is required. To the best of our knowledge, this section is the first result on the source term $F$ is not  necessarily  a  locally Lipschitz source.
	 We will solve the  problem \eqref{parabolicproblemwhitenoise} with a special generalized case of source term defined by \eqref{df-F}. Our regularized problem is different to  the one in  section \ref{section4} because  we don't approximate the source function $F$. Indeed, we have the following regularized problem
	
	 \bq  \label{QR3}
	 \left\{ \begin{gathered}
	 \frac {\partial  {\bf u}^{\delta}_{\bf N(\delta)} }{\partial t}-\nabla\Big(	{\bf a}_\delta^{\text{obs}}(x,t)\nabla {\bf u}^\delta_{\bf N(\delta)}  \Big)-  \mathbb{\bf Q}_{\beta_{\bf N(\delta)} }^\delta  ({\bf u}^\delta_{\bf N(\delta)} ) (x,t)  \hfill \\
	 ~~~~~~~~~~~~~~~~~~~~~~~~~~~~~~~~~~~~~= {F}  \lf(x,t,{\bf u}^\delta_{\bf N(\delta)}  (x,t)\rt),\quad (x,t) \in \Omega \times (0,T), \hfill \\
	 {\bf u}^\delta_{\bf N(\delta)} |_{\partial \Omega }=0,\quad t \in (0,T),\hfill\\
	 {\bf u}^\delta_{\bf N}  (x,T)=\overline  G_{\delta, {\bf N(\delta)}}(x), \quad (x,t) \in \Omega \times (0,T),\hfill
	 \end{gathered}  \right.
	 \eq

We make the following assumptions on $F \in C^0 (\mathbb R)$ in the following:  There exists $C_1$ and $C'_1, C_2$ and $p >1$ and $\overline   \gamma$ such that
\begin{align}
&zF(x,t,z) \ge C_1 |z|^p-C'_1  \label{ass1} \\
&|F(x,t,z)| \le C_2(1+|z|^{p-1}) \label{ass2} \\
&(z_1-z_2) \left( F(x,t,z_1)-F(x,t,z_2) \right) \ge -\overline \gamma |z_1-z_2|^2. \label{ass3}
\end{align}
It  is easy to check that the function {$F(x,t, z)= z^{\frac{1}{3}}$ }satisfies the conditions \eqref{ass1}, \eqref{ass2} and \eqref{ass3}. Note here  that this function is not locally Lipschitz.

Now we have the following result

		\begin{theorem} \label{nonlocal2}
			Let us assume that $F$ satisfies \eqref{ass1}, \eqref{ass2} and \eqref{ass3}. Then, there exists a unique  weak solution  $ {\bf u}^\delta_{\bf N(\delta)} $ of  problem \eqref{QR3} such that
			 $$  {\bf u}^\delta_{\bf N(\delta)} \in L^{2}(0,T;H^{1})\cap L^{\infty }(0,T;L^{2}).$$ Assume that the problem \eqref{parabolicproblemwhitenoise} has a unique solution $\mathbf u$ satisfying  $\mathbf u(\cdot,t) \in \mathcal{W}_{MT}$. 	Let us choose $\beta_{\bf N_\delta}$ be as Theorem \eqref{nonlocal1}.
			Then we have the following estimate
			\begin{align} \label{error3}
			{\bf E} \lf\|{\bf u}^\delta_{\bf N(\delta)}(x,t)-\textbf{u}(x,t) \rt\|^2_{L^2(\Omega)}
			\leq \beta_{\bf N(\delta)}^{\frac{2t}{T}} e^{( 2\overline \gamma +1 )T} 	\widetilde C (\delta).
			\end{align}
{ where $\widetilde C (\delta)$ is defined in \eqref{mainerror}. }
\end{theorem}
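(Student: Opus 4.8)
The statement has two independent assertions, and the plan is to treat them in turn. The existence and uniqueness of $\mathbf{u}^\delta_{\bf N(\delta)}$ is the genuinely new ingredient, because the contraction-mapping argument used in Step 1 of Theorem \ref{nonlocal1} is unavailable once $F$ is only continuous with the structure \eqref{ass1}--\eqref{ass3} rather than Lipschitz. The error estimate \eqref{error3}, by contrast, will follow the proof of Theorem \ref{nonlocal1} line by line, with the single modification that the one-sided bound \eqref{ass3} plays the role previously played by the global Lipschitz property of $\mathcal{F}_{\mathcal R_\delta}$.

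For existence I would first reverse time, setting $\mathbf{v}^\delta_{\bf N(\delta)}(x,t)=\mathbf{u}^\delta_{\bf N(\delta)}(x,T-t)$, which turns the backward problem \eqref{QR3} into a forward parabolic problem with the coercive diffusion $\nabla(\mathbf{b}_\delta^{\mathrm{obs}}\nabla\,\cdot\,)$, the zeroth-order operator $\mathbb{Q}$ (whose Fourier symbol $\tfrac1T\ln(1+\beta e^{MT\lambda_j})$ is nonnegative), and the source $-F$. I would then run the Faedo--Galerkin scheme, seeking $u_m(t)=\sum_{j=1}^m c_{jm}(t)\phi_j$ solving the projected ODE system; local solvability of the $c_{jm}$ comes from the continuity of $F$, and the a priori bounds extend them to $[0,T]$. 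Testing the Galerkin system with $u_m$ gives the energy identity in which the diffusion contributes $\int_\Omega \mathbf{b}_\delta^{\mathrm{obs}}|\nabla u_m|^2\,dx\ge b_0\|\nabla u_m\|_{L^2}^2$, the operator $\mathbb{Q}$ contributes a term bounded by $\tfrac1T\ln(1/\beta)\|u_m\|_{L^2}^2$, and the coercivity assumption \eqref{ass1} yields $\langle F(u_m),u_m\rangle\ge C_1\|u_m\|_{L^p}^p-C_1'|\Omega|$. A Gronwall argument then bounds $u_m$ uniformly in $L^\infty(0,T;L^2(\Omega))\cap L^2(0,T;H_0^1(\Omega))\cap L^p(0,T;L^p(\Omega))$, and the growth condition \eqref{ass2} bounds $F(u_m)$ uniformly in $L^{p'}(\Omega\times(0,T))$ with $p'=p/(p-1)$, which in turn bounds $\partial_t u_m$ in a dual space.

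With these estimates, the Aubin--Lions--Simon compactness lemma furnishes a subsequence converging strongly in $L^2(\Omega\times(0,T))$ and almost everywhere; since $F$ is continuous and $\{F(u_m)\}$ is bounded in $L^{p'}$, the a.e.\ convergence upgrades to $F(u_m)\rightharpoonup F(u)$ weakly in $L^{p'}$, so the nonlinear term passes to the limit and the limit $u$ is a weak solution in $L^2(0,T;H^1)\cap L^\infty(0,T;L^2)$. Uniqueness is immediate from \eqref{ass3}: subtracting the equations for two solutions and testing with the difference $w$, the diffusion and $\mathbb{Q}$ terms have favourable sign while $\langle F(u_1)-F(u_2),w\rangle\ge-\overline\gamma\|w\|_{L^2}^2$, so Gronwall forces $w\equiv0$.

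For the error estimate I would reproduce the argument of Theorem \ref{nonlocal1} through the energy identity \eqref{3J} with $\mathbf{V}^\delta_{\bf N(\delta)}=e^{\rho_\delta(t-T)}[\mathbf{u}^\delta_{\bf N(\delta)}-\mathbf{u}]$; the bounds \eqref{J1}, \eqref{J2}, \eqref{J3} for $\widetilde{A_4},\widetilde{A_5},\widetilde{A_6}$ are unchanged since they involve only $\mathbb{P}$, $\mathbb{Q}$ and the coefficient perturbation. The only term that differs is $\widetilde{A_7}$, where the true source $F$ replaces the truncation $\mathcal{F}_{\mathcal R_\delta}$: instead of the Lipschitz estimate \eqref{J4} I would invoke \eqref{ass3} to get $\langle F(\mathbf{u}^\delta_{\bf N(\delta)})-F(\mathbf{u}),\mathbf{u}^\delta_{\bf N(\delta)}-\mathbf{u}\rangle\ge-\overline\gamma\|\mathbf{u}^\delta_{\bf N(\delta)}-\mathbf{u}\|_{L^2}^2$, whence $\widetilde{A_7}\ge-2\overline\gamma\int_t^T\|\mathbf{V}^\delta_{\bf N(\delta)}\|_{L^2}^2\,ds$. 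Choosing $\rho_\delta=\tfrac1T\ln(1/\beta_{\bf N(\delta)})$ and applying Gronwall exactly as before replaces $K(\mathcal R_\delta)$ throughout by the constant $\overline\gamma$, which is precisely why \eqref{error3} carries the factor $e^{(2\overline\gamma+1)T}$; the quantity $\widetilde C(\delta)$ is identical to the one in Theorem \ref{nonlocal1}. The main obstacle is thus the existence half: assembling a priori estimates strong enough to control $F(u_m)$ through \eqref{ass1}--\eqref{ass2} and then identifying the weak limit of the nonlinearity, whereas the error estimate is a mechanical adaptation of the already-proved Theorem \ref{nonlocal1}.
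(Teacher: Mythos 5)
Your overall strategy is the paper's strategy: time reversal, Faedo--Galerkin approximation with a priori estimates drawn from \eqref{ass1}--\eqref{ass2}, a.e.\ convergence combined with the uniform $L^{p'}$ bound and Lions' weak-convergence lemma (Lemma \ref{lemmalions}) to pass to the limit in the nonlinearity, uniqueness from the one-sided condition \eqref{ass3}, and an error estimate obtained by rerunning the proof of Theorem \ref{nonlocal1} with \eqref{ass3} replacing the Lipschitz bound on the truncated source --- the paper does exactly this, calling the new term $\widetilde{A_8}$ and arriving at \eqref{mainerror} with $K(\mathcal{R}_\delta)$ replaced by $\overline\gamma$. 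The one structural difference in the existence part is minor: you bound $\partial_t u_m$ in a negative-order dual space and invoke Aubin--Lions, whereas the paper proves a second, time-weighted energy estimate by testing with $t^2\frac{d}{dt}c^\delta_{mi}(t)$ (using Lemma \ref{lem1} on the primitive of $F$) and then applies Lions' compactness lemma; both routes deliver the pointwise convergence that feeds the limit of $F(u_m)$.

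There is, however, a concrete error in your bookkeeping of the regularization operator, and it sits at the heart of the quasi-reversibility construction. After reversing time, the zeroth-order term in the forward equation is not $\mathbb{Q}_{\beta_{\bf N(\delta)}}^\delta$ but (up to sign) $\mathbb{P}_{\beta_{\bf N(\delta)}}^\delta = M\Delta + \mathbb{Q}_{\beta_{\bf N(\delta)}}^\delta$, i.e.\ the Fourier multiplier $\frac{1}{T}\ln\bigl(\beta_{\bf N(\delta)} + e^{-MT\lambda_j}\bigr)$; this is visible in the paper's transformed problem \eqref{t8}, where the multiplier $\frac{1}{T}\ln\bigl(\tfrac{1}{\beta_{\bf N(\delta)}+e^{-MT\lambda_j}}\bigr)$ appears. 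The $L^2\to L^2$ bound $\frac{1}{T}\ln(1/\beta_{\bf N(\delta)})$ that your energy estimate relies on holds precisely for this combination (part 2 of Lemma \ref{importantlemma}); it is \emph{false} for $\mathbb{Q}_{\beta_{\bf N(\delta)}}^\delta$ alone, whose symbol $\frac{1}{T}\ln\bigl(1+\beta_{\bf N(\delta)}e^{MT\lambda_j}\bigr)$ grows linearly in $\lambda_j$ and which is bounded only from the Gevrey class $\mathcal{W}_{MT}$ into $L^2$ (part 1 of the same lemma). For the same reason your uniqueness argument cannot discard this term as having ``favourable sign'': with the correct signs its quadratic form is nonnegative on the right-hand side, i.e.\ it is an amplifying term, and it must be absorbed --- as the paper does --- by Gronwall with the constant $\frac{2}{T}\ln(1/\beta_{\bf N(\delta)})$, equivalently by the exponential weight $e^{\overline R_\delta(t-T)}$ with $\overline R_\delta = \frac{1}{T}\ln(1/\beta_{\bf N_\delta})+\overline\gamma$, rather than dropped. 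Since you also quote the correct bound, the repair is immediate, but as literally written both the attribution of that bound to $\mathbb{Q}$ and the sign claim are wrong, and an energy estimate run with $\mathbb{Q}$ alone would fail.
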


\begin{remark}
	Our method in this Theorem give the convergence rate \eqref{error3}  which better than the error rate in  \eqref{error2}. Indeed, since $\lim_{\delta \to 0}  K({\mathcal R}_\delta) =+\infty $, we have
	\begin{equation}
	\frac{\text{The right hand side of \eqref{error2}}}{\text{The right hand side of \eqref{error3}}}   =\frac{\beta_{\bf N(\delta)}^{\frac{2t}{T}} e^{( 2K({\mathcal R}_\delta) ) +1 )T} 	\widetilde C (\delta)}{  \beta_{\bf N(\delta)}^{\frac{2t}{T}} e^{( 2\overline \gamma +1 )T} 	\widetilde C (\delta)} \to +\infty
	\end{equation}
	when $\delta \to 0$.
\end{remark}

\subsection{ Proof of Theorem \ref{nonlocal2}}

\subsubsection{Proof of the existence  of solution of Problem \eqref{QR3} }
First, by changing variable ${\bf v}^\delta_{\bf N(\delta)} (x,t)= {\bf u}^\delta_{\bf N(\delta)}(x,T-t)$, we transform Problem \eqref{QR3} into the initial value problem
	\bq
	\left\{ \begin{gathered}
	\frac {\partial  {\bf v}^\delta_{\bf N(\delta)} }{\partial t}  -\nabla\Big({\bf b}_\delta^{\text{obs}}(x,t)\nabla {\bf v}^\delta_{\bf N(\delta)} \Big)=-F(x,t,{\bf v}^\delta_{\bf N} (x,t)) +\mathbb{\bf P}_{\beta_{\bf N(\delta)} }^\delta  ( {\bf v}^\delta_{{\bf N(\delta)}}  (x,t) )  , \quad (x,t) \in \Omega \times (0,T), \hfill \\
	{\bf v}^\delta_{\bf N(\delta)}|_{\partial \Omega}=0, \quad t \in (0,T), \hfill\\
	{\bf v}^\delta_{\bf N(\delta)}(x,0)=\overline  G_{\delta, {\bf N(\delta)}}(x),\quad (x,t) \in \Omega \times (0,T).\hfill  \label{QR4}
	\end{gathered}  \right.
	\eq
where ${\bf b}_\delta^{\text{obs}}(x,t)= M- {\bf a}_\delta^{\text{obs}}(x,t)$.\\
The weak formulation of the initial boundary value  problem  \eqref{QR4}
can then be given in the following manner: Find ${\bf v}^\delta_{{\bf N(\delta)}}(t)$ defined in the open
set $(0,T)$ such that ${\bf v}^\delta_{{\bf N(\delta)}}$ satisfies the following variational problem
\begin{align}
\int_\Omega  \frac{d}{dt} {\bf v}^\delta_{{\bf N(\delta)},m}  \varphi  dx &+ \int_\Omega {\bf b}_\delta^{\text{obs}}(x,t)\nabla {\bf v}^\delta_{{\bf N(\delta)},m} \nabla \varphi  dx+ 	\int_\Omega F(  {\bf v}^\delta_{{\bf N(\delta)},m}   (t)  ) \varphi  dx \nn\\
&= \int_\Omega \mathbb{\bf P}_{\beta_{\bf N(\delta)} }^\delta  ( {\bf v}^\delta_{{\bf N(\delta)},m}  (t) )  \varphi dx  \label{pt11}
\end{align}
for all $ \varphi \in H^{1},$ and the initial condition%
\begin{equation}
\begin{tabular}{l}
$ {\bf v}^\delta_{\bf N(\delta)} (0)=\overline  G_{\delta, {\bf N(\delta)}}.$%
\end{tabular}
\end{equation}

	\textbf{Proof of the existence  of solution of Problem \eqref{QR3} }. The proof consists of several steps.

	\textbf{Step 1:} \textit{The Faedo -- Galerkin approximation} (introduced by
	Lions \cite{Lions}).\\
	 In the space $H^1(\Omega)$, we take a basis
	 $ \{e_{j} \}_{j=1}^\infty$ and define the finite dimensional subspace
	 \[
	 V_m= {\text span} \{ e_1, e_2,...e_m \}.
	 \]
Let $\overline  G_{\delta, {\bf N(\delta)},m} $ be an element of $V_m$ such that	
	\begin{equation} \label{pt00}
	\begin{tabular}{l}
	$\overline  G_{\delta, {\bf N(\delta)},m} =\sum_{j=1}^{m}  d^\delta_{mj} e_{j}\rightarrow \overline  G_{\delta, {\bf N(\delta)}} $ strongly in $%
	L^{2} $%
	\end{tabular}
	\end{equation}
	as $m \to +\infty$.
 We can  express  the approximate
	solution of the problem  \eqref{QR4} in the form%
	\begin{equation}
	 {\bf v}^\delta_{\bf N(\delta),m}(t)=\sum_{j=1}^{m}c^\delta_{mj}(t) e_{j}, \label{pt000}
	\end{equation}%
	where the coefficients $c^\delta_{mj}$ satisfy the system of linear differential
	equations
	\begin{align}
	\int_\Omega  \frac{d}{dt} {\bf v}^\delta_{{\bf N(\delta)},m}  e_i dx &+ \int_\Omega {\bf b}_\delta^{\text{obs}}(x,t)\nabla {\bf v}^\delta_{{\bf N(\delta)},m} \nabla e_i dx+ 	\int_\Omega F(  {\bf v}^\delta_{{\bf N(\delta)},m}   (t)  ) e_i dx \nn\\
	&= \int_\Omega \mathbb{\bf P}_{\beta_{\bf N(\delta)} }^\delta  ( {\bf v}^\delta_{{\bf N(\delta)},m}  (t) )  e_i dx  \label{pt1}
	\end{align}
	with  $i=\overline {1,m}$ and  the initial conditions
	\begin{equation}
	c^\delta_{mj} (0)= d^\delta_{mj},~j=\overline {1,m}. \label{pt2}
	\end{equation}
	The existence of a local solution of system \eqref{pt1}-\eqref{pt2} is guaranteed by Peano's theorem on existence of solutions. For each $m$ there exists a solution ${\bf v}^\delta_{\bf N(\delta),m}(t)$ in the  form \eqref{pt000} which satisfies \eqref{pt1} and \eqref{pt2}  almost everywhere on $%
	0\leq t\leq T_{m}$ for some $T_{m},$ $0<T_{m}\leq T.$ The following
	estimates allow one to take $T_{m}=T$\ for all $m.$

	\textbf{Step 2. A priori estimates}.

	a) \textit{\bf The first estimate}. Multiplying the $i^{th}$\ equation of (\ref%
	{pt1})\ by $c^\delta_{mi}(t)$\ and summing up with respect to $i,$ afterwards,
	integrating by parts with respect to the time variable from $0$ to $t,$ we
	get after some rearrangements
	\begin{align}
	&\left\Vert {\bf v}^\delta_{\bf N(\delta),m}(t) \right\Vert_{L^2(\Omega)}
	^{2}+2 \int_0^t  \int_\Omega {\bf b}_\delta^{\text{obs}}(x,t)|\nabla {\bf v}^\delta_{{\bf N(\delta)},m}(s) |^2  dx ds  +2\int\nolimits_{0}^{t} \int_\Omega
	F(  {\bf v}^\delta_{{\bf N(\delta)},m}   (s)  )   {\bf v}^\delta_{{\bf N(\delta)},m}   (s)  dx ds \nn\\
	&=\left\Vert \overline  G_{\delta, {\bf N(\delta)},m}\right\Vert
	^{2} + 2 \int\nolimits_{0}^{t} \int_\Omega
	\mathbb{\bf P}_{\beta_{\bf N(\delta)} }^\delta  (  {\bf v}^\delta_{{\bf N(\delta)},m}   (s)  )   {\bf v}^\delta_{{\bf N(\delta)},m}   (s)  dx ds
	\end{align}
	By \eqref{pt00},  we have%
	\begin{equation}
	\begin{tabular}{l}
	$\left\Vert \overline  G_{\delta, {\bf N(\delta)},m}\right\Vert
	^{2} \leq B_{0}(\delta),$ \ for all \ $m,$%
	\end{tabular}
	\tag{3.8}  \label{c8}
	\end{equation}%
	where $B_{0}(\delta)$ depends  on $\overline  G_{\delta, {\bf N(\delta)}}$ and is  independent of $m$. \\
		Using the lower bound of $ {\bf b}_\delta^{\text{obs}}(x,t)$, we have the following estimate
	\begin{equation}
	2 \int_0^t  \int_\Omega {\bf b}_\delta^{\text{obs}}(x,t)|\nabla {\bf v}^\delta_{{\bf N(\delta)},m}(s) |^2  dx ds \ge 2b_0 \int_0^t \|{\bf v}^\delta_{{\bf N(\delta)},m}(s) \|_{H^1(\Omega)}ds. \label{pt4}
	\end{equation}
	Using the assumption on $F$, we have
	\begin{align}
	2\int\nolimits_{0}^{t} \int_\Omega
	F(  {\bf v}^\delta_{{\bf N(\delta)},m}   (s)  )   {\bf v}^\delta_{{\bf N(\delta)},m}   (s)  dx ds  \ge 	2C_{1}\int\nolimits_{0}^{t}\left\Vert  {\bf v}^\delta_{{\bf N(\delta)},m}   (s) \right\Vert
	_{L^{p}(\Omega)}^{p}ds-2TC_{1}^{\prime } \label{pt5}
	\end{align}
	and
	\begin{align}
	2 \int\nolimits_{0}^{t} \int_\Omega
	\mathbb{\bf P}_{\beta_{\bf N(\delta)} }^\delta  (  {\bf v}^\delta_{{\bf N(\delta)},m}   (s)  )   {\bf v}^\delta_{{\bf N(\delta)},m}   (s)  dx ds  \le \frac{2}{T} \ln\lf(\frac{1}{\beta_{\bf N(\delta)} }\rt) \int\nolimits_{0}^{t} \left\Vert  {\bf v}^\delta_{{\bf N(\delta)},m}   (s)  \right\Vert_{L^2(\Omega)}^{2}ds. \label{pt6}
	\end{align}
	 Hence, it follows from (\ref{pt4}) -- (\ref{pt6}) that
	 \begin{align}
	 \left\Vert {\bf v}^\delta_{\bf N(\delta),m}(t) \right\Vert_{L^2(\Omega)}
	 ^{2} &+ 2b_0 \int_0^t \|{\bf v}^\delta_{{\bf N(\delta)},m}(s) \|_{H^1(\Omega)}ds + 2C_{1}\int\nolimits_{0}^{t}\left\Vert  {\bf v}^\delta_{{\bf N(\delta)},m}   (s) \right\Vert
	 _{L^{p}(\Omega)}^{p}ds  \nn\\
	 & \le B_{0}(\delta)+2TC_{1}^{\prime }+ \frac{1}{T} \ln\lf(\frac{1}{\beta_{\bf N(\delta)} }\rt) \int\nolimits_{0}^{t} \left\Vert {\bf v}^\delta_{{\bf N(\delta)},m}(s) \right\Vert_{L^2(\Omega)}^{2}ds. \label{pt8}
	 \end{align}
	Let us denote
	\begin{equation}
	{\bf S}_m^\delta (t)= \left\Vert {\bf v}^\delta_{\bf N(\delta),m}(t) \right\Vert_{L^2(\Omega)}
	^{2} + 2b_0 \int_0^t \|{\bf v}^\delta_{{\bf N(\delta)},m}(s) \|_{H^1(\Omega)}ds + 2C_{1}\int\nolimits_{0}^{t}\left\Vert  {\bf v}^\delta_{{\bf N(\delta)},m}   (s) \right\Vert
	_{L^{p}(\Omega)}^{p}ds. \label{eq1}
	\end{equation}
	 Using the fact that $ \int\nolimits_{0}^{t} \left\Vert  {\bf v}^\delta_{{\bf N(\delta)},m}   (s)  \right\Vert_{L^2(\Omega)}^{2}ds \le   \int\nolimits_{0}^{t} 	{\bf S}_m^\delta (s) ds $, we know from \eqref{pt8} that
	 \begin{align}
	 	{\bf S}_m^\delta (t) \le B_{0}(\delta)+2TC_{1}^{\prime }+ \frac{1}{T} \ln\lf(\frac{1}{\beta_{\bf N(\delta)} }\rt)  \int\nolimits_{0}^{t} 	{\bf S}_m^\delta (s) ds
	 \end{align}
Applying 	 Gronwall's lemma, we obtain
\begin{align}
	{\bf S}_m^\delta (t)  \le \left[  B_{0}(\delta)+2TC_{1}^{\prime } \right] \exp\Big( \frac{t}{T} \ln\lf(\frac{1}{\beta_{\bf N(\delta)}  }\rt)  \Big) \le \left[  B_{0}(\delta)+2TC_{1}^{\prime } \right] \exp\Big(  \ln\lf(\frac{1}{\beta_{\bf N(\delta)}  }\rt)  \Big)= B_1(\delta,T), \label{eq2}
\end{align}
	for all $m\in
	\mathbb{N}
	,$ for all $t,$\ $0\leq t\leq T_{m}\leq T,$ \ i.e., $T_{m}=T,$ where $C_{T}$
	always indicates a bound depending on $T.$

	b) \textit{\bf The second estimate}.  Multiplying the $i^{th}$\ equation of (\ref%
	{pt1})\ by $t^2 \frac{d}{dt} c^\delta_{mi}(t)$ and summing up with respect
	to $i,$ we have
	
	\begin{align}
	\left\Vert t \frac{d}{dt}{\bf v}^\delta_{{\bf N(\delta)},m}(t) \right\Vert_{L^2(\Omega)}^2 &+t^2 \int_\Omega {\bf b}_\delta^{\text{obs}}(x,t) \nabla {\bf v}^\delta_{{\bf N(\delta)},m}(t)  \nabla \left( \frac{d}{dt} \nabla {\bf v}^\delta_{{\bf N(\delta)},m}(t)\right)  dx\nn\\
	&+ \int_\Omega t^2 F\left( {\bf v}^\delta_{{\bf N(\delta)},m}(t)\right)   \frac{d}{dt}  {\bf v}^\delta_{{\bf N(\delta)},m}(t)  dx \nn\\
	&=\int_\Omega t^2 	\mathbb{\bf P}_{\beta_{\bf N(\delta)} }^\delta \left(  {\bf v}^\delta_{{\bf N(\delta)},m}   (t)  \right)   \frac{d}{dt}  {\bf v}^\delta_{{\bf N(\delta)},m}(t)  dx. \label{pt9}
	\end{align}

	It is easy to check that for any $u \in H^1(\Omega)$
	\begin{equation}
	\frac{d}{dt} \left[ \int_\Omega {\bf b}_\delta^{\text{obs}}(x,t) |\nabla u(t)|^2 dx  \right]= 2\int_\Omega  {\bf b}_\delta^{\text{obs}}(x,t)  \nabla u(t) \nabla u'(t) dx+  \int_\Omega \frac{\partial }{\partial t}{\bf b}_\delta^{\text{obs}}(x,t) |\nabla u(t)|^2 dx.
	\end{equation}
The equality \eqref{pt9} is equivalent to
\begin{align}
2	\left\Vert t \frac{d}{dt}{\bf v}^\delta_{{\bf N(\delta)},m}(t) \right\Vert_{L^2(\Omega)}^2&+ 	\frac{d}{dt} \left[ t^2 \int_\Omega {\bf b}_\delta^{\text{obs}}(x,t) | {\bf v}^\delta_{{\bf N(\delta)},m}(t) |^2 dx  \right]+2\int_\Omega t^2 F\left( {\bf v}^\delta_{{\bf N(\delta)},m}(t)\right)   \frac{d}{dt}  {\bf v}^\delta_{{\bf N(\delta)},m}(t)  dx \nn\\
&= 2t \int_\Omega {\bf b}_\delta^{\text{obs}}(x,t)|\nabla {\bf v}^\delta_{{\bf N(\delta)},m}(t) |^2  dx+ t^2 \int_\Omega \frac{\partial}{\partial t} {\bf b}_\delta^{\text{obs}}(x,t)|\nabla {\bf v}^\delta_{{\bf N(\delta)},m}(s) |^2  dx\nn\\
&+\int_\Omega t^2 	\mathbb{\bf P}_{\beta_{\bf N(\delta)} }^\delta \left(  {\bf v}^\delta_{{\bf N(\delta)},m}   (t)  \right)   \frac{d}{dt}  {\bf v}^\delta_{{\bf N(\delta)},m}(t)  dx.
\end{align}
By integrating the last equality from $0$ to $t$, we get
\begin{align}
&2 \int_0^t \left\Vert s \frac{d}{ds}{\bf v}^\delta_{{\bf N(\delta)},m}(s) \right\Vert_{L^2(\Omega)}^2ds+ \underbrace {t^2 \int_\Omega {\bf b}_\delta^{\text{obs}}(x,t) | {\bf v}^\delta_{{\bf N(\delta)},m}(t) |^2 dx}_{I_1}\nn\\
& + \underbrace{_2 \int_0^t \int_\Omega s^2 F\left( {\bf v}^\delta_{{\bf N(\delta)},m}(s)\right)   \frac{d}{ds}  {\bf v}^\delta_{{\bf N(\delta)},m}(s)  dx ds}_{I_2}\nn\\
&= \underbrace {2 \int_0^t  \int_\Omega s  {\bf b}_\delta^{\text{obs}}(x,s)|\nabla {\bf v}^\delta_{{\bf N(\delta)},m}(s) |^2  dxds}_{I_3} + \underbrace {\int_0^t   \int_\Omega s^2  \frac{\partial}{\partial s} {\bf b}_\delta^{\text{obs}}(x,s)|\nabla {\bf v}^\delta_{{\bf N(\delta)},m}(s) |^2  dxds}_{I_4}\nn\\
&+\underbrace {\int_0^t \int_\Omega   s^2	\mathbb{\bf P}_{\beta_{\bf N(\delta)} }^\delta \left(  {\bf v}^\delta_{{\bf N(\delta)},m}   (s)  \right)   \frac{d}{ds}  {\bf v}^\delta_{{\bf N(\delta)},m}(s)  dxds}_{I_5}.
\end{align}
{\bf Estimate $I_1$}.
Since the assumption $ {\bf b}_\delta^{\text{obs}}(x,t)  \ge b_0$, we know that
\begin{align}
I_1 = t^2 \int_\Omega {\bf b}_\delta^{\text{obs}}(x,t) | {\bf v}^\delta_{{\bf N(\delta)},m}(t) |^2 dx \geq b_{0}\left\Vert t {\bf v}^\delta_{{\bf N(\delta)},m}(t) \right\Vert
_{H^{1}}^{2}. \label{pt10}
\end{align}
{\bf Estimate $I_2$}. To estimate $I_2$, we need the following Lemma
\begin{lemma} \label{lem1}
	Let $\mu_0=\left( \frac{C'_1}{C_1} \right)^{1/p} ,~ \overline m= \int_{-\mu_0}^{+\mu_0} |F (\xi)|d\xi,~~\widetilde F(z)= \int_0^z F(y)dy,~~z \in \mathbb{R}.$ Then we get
	\begin{equation}
	-\overline m \le \widetilde F(z) \le C_2 \left(  |z| +\frac{1}{p} |z|^p \right),~~z \in \mathbb{R}.
	\end{equation}
\end{lemma}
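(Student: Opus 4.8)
The plan is to prove the two inequalities separately by integrating the pointwise bounds on $F$ furnished by the hypotheses; throughout I suppress the fixed pair $(x,t)$, since the statement concerns $\widetilde F(z)=\int_0^z F(y)\,dy$ as a function of $z$ alone. The upper bound is the routine half and uses only the growth condition \eqref{ass2}. Since $|F(y)|\le C_2(1+|y|^{p-1})$ for every $y$, for $z\ge 0$ one integrates directly to get $\widetilde F(z)\le\int_0^z|F(y)|\,dy\le C_2\int_0^z(1+|y|^{p-1})\,dy=C_2\big(z+\tfrac{1}{p}z^p\big)$, and for $z<0$ the same computation applied to $\big|\int_z^0 F(y)\,dy\big|$ yields $|\widetilde F(z)|\le C_2\big(|z|+\tfrac1p|z|^p\big)$. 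In particular $\widetilde F(z)\le C_2\big(|z|+\tfrac1p|z|^p\big)$ for all $z$, which is the asserted upper estimate (in fact a slightly stronger two-sided bound).

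The lower bound is where the coercivity hypothesis \eqref{ass1} and the threshold $\mu_0=(C_1'/C_1)^{1/p}$ enter. The choice of $\mu_0$ is exactly the point at which $C_1|y|^p-C_1'$ changes sign, so $C_1|y|^p-C_1'>0$ precisely when $|y|>\mu_0$. From $yF(y)\ge C_1|y|^p-C_1'$ I would read off the sign of $F$ away from the origin: for $y>\mu_0$ dividing by the positive quantity $y$ gives $F(y)\ge (C_1y^p-C_1')/y>0$, while for $y<-\mu_0$ dividing by the negative quantity $y$ reverses the inequality and gives $F(y)<0$. Thus $F$ is nonnegative to the right of $\mu_0$ and nonpositive to the left of $-\mu_0$, whereas on $[-\mu_0,\mu_0]$ we control it only through $\overline m=\int_{-\mu_0}^{\mu_0}|F(\xi)|\,d\xi$.

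With this sign information I would split $\widetilde F(z)$ at the relevant endpoint of $[-\mu_0,\mu_0]$ and argue by cases on the location of $z$. If $|z|\le\mu_0$, then $[0,z]\subseteq[-\mu_0,\mu_0]$, so $|\widetilde F(z)|\le\int_{-\mu_0}^{\mu_0}|F|\,dy=\overline m$ and hence $\widetilde F(z)\ge-\overline m$. If $z>\mu_0$, write $\widetilde F(z)=\int_0^{\mu_0}F+\int_{\mu_0}^{z}F$, where the first integral is $\ge-\int_{-\mu_0}^{\mu_0}|F|=-\overline m$ and the second is $\ge 0$ because $F\ge 0$ on $(\mu_0,z)$; together $\widetilde F(z)\ge-\overline m$. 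The case $z<-\mu_0$ is symmetric: $\widetilde F(z)=\int_0^{-\mu_0}F+\int_{-\mu_0}^{z}F$, the first term is $\ge-\overline m$, and the second equals $-\int_z^{-\mu_0}F\,dy\ge 0$ since $F\le 0$ on $(z,-\mu_0)$. In every case $\widetilde F(z)\ge-\overline m$, which closes the estimate.

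The one delicate point, and the step I would check most carefully, is the sign bookkeeping in the last case: for $z<-\mu_0$ the integral $\int_{-\mu_0}^{z}F\,dy$ runs in the decreasing direction, so one must rewrite it as $-\int_z^{-\mu_0}F\,dy$ and combine the negativity of $F$ on $(z,-\mu_0)$ with the orientation reversal to conclude it contributes a nonnegative amount. Everything else is a direct integration of the hypotheses; in particular the monotonicity condition \eqref{ass3} plays no role in this pointwise estimate and is reserved for the uniqueness argument.
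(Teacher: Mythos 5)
Your proof is correct, and it is the natural argument: the upper bound follows by integrating the growth condition \eqref{ass2}, while the lower bound uses that $\mu_0$ is exactly the sign-change threshold forced by \eqref{ass1} (so $F\ge 0$ on $(\mu_0,\infty)$ and $F\le 0$ on $(-\infty,-\mu_0)$), after which splitting $\int_0^z F$ at $\pm\mu_0$ and handling the orientation of the integral for $z<-\mu_0$ gives $\widetilde F(z)\ge -\overline m$ in every case. The paper itself omits the proof, stating only that it is ``easy,'' so there is no alternative argument to compare against; your write-up supplies precisely the intended reasoning, including the one genuinely delicate point (the sign bookkeeping for $z<-\mu_0$), and you are also right that \eqref{ass3} plays no role here.
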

The proof of Lemma \eqref{lem1} is easy and we omit it here. Now we return to estimate $I_2$. By a simple computation and then using Lemma \eqref{lem1}, we have
\begin{align}
I_2 &=2\int\nolimits_{0}^{t}s^{2}ds\frac{d}{ds} \left[ \int\nolimits_\Omega  dx\int%
\nolimits_{0}^{ {\bf v}^\delta_{{\bf N(\delta)},m} (x,s)}F(y)dy\right]\nn\\
&=2\int\nolimits_{0}^{t}s^{2}ds\frac{d}{ds} \left[ \int\nolimits_\Omega  dx\int%
\nolimits_{0}^{ {\bf v}^\delta_{\bf N(\delta),m} (x,s)}F(y)dy\right] \nn\\
&=2\int\nolimits_{0}^{t}\left[ \frac{d}{ds}\left( s^{2}\int\nolimits_\Omega  \widetilde {F}%
\left( {\bf v}^\delta_{\bf N(\delta),m}(x,s) \right)  dx\right) -2s\int\nolimits_\Omega  \widetilde {F}%
\left( {\bf v}^\delta_{\bf N(\delta),m}(x,s) \right)  dx\right] \nn\\
&=2t^{2}\int\nolimits_\Omega  \widetilde {F}%
\left( {\bf v}^\delta_{\bf N(\delta),m}(x,t) \right)  dx-4\int%
\nolimits_{0}^{t}sds \int\nolimits_\Omega  \widetilde {F}%
\left( {\bf v}^\delta_{\bf N(\delta),m}(x,s) \right)  dx  \nn\\
&\geq
-2T^{2} \overline m |\Omega|-4C_{2}\int\nolimits_{0}^{t}s\left[ \left\Vert
{\bf v}^\delta_{\bf N(\delta),m} (s)\right\Vert _{L^{1}}+\frac{1}{p}\left\Vert {\bf v}^\delta_{\bf N(\delta),m} (s)\right\Vert
_{L^{p}}^{p}\text{ }\right] ds\nn\\
&\geq
-2T^{2} \overline m  |\Omega| -4TC_{2}\left[ T\left\Vert {\bf v}^\delta_{\bf N(\delta),m} \right\Vert _{L^{\infty
	}(0,T;L^{2})}+\frac{1}{p}\frac{1}{2C_{1}} 	{\bf S}_m^\delta (t)  \right]\nn\\
& \geq - B_2(\delta, T). \label{pt11}
\end{align}
{\bf Estimate $I_3$}. Using \eqref{eq1}, we have the following estimate
\begin{align}
I_3  \le 2T  b_1   \int_0^t \|{\bf v}^\delta_{\bf N(\delta),m} (s) \|^2_{H^1}ds \le \frac{2Tb_1}{2b_0} 	{\bf S}_m^\delta (t). \label{pt12}
\end{align}	

{\bf Estimate $I_4$}. Let us set
$$\widetilde{a}
_{T}= \underset{%
	(x,t)\in \lbrack 0,1]\times \lbrack 0,T]}{\sup }   \frac{\partial}{\partial t} {\bf b}_\delta^{\text{obs}}(x,t) $$ then $I_4$ is bounded by
\begin{align}
I_4   &\leq \widetilde{a}_{T}\int\nolimits_{0}^{t}\left
\Vert s  {\bf v}^\delta_{\bf N(\delta),m} (s) \right\Vert _{H^{1}}^{2}ds \le  T^{2}\widetilde{a}
_{T}\int\nolimits_{0}^{t}\left\Vert  {\bf v}^\delta_{\bf N(\delta),m} (s) \right\Vert
_{H^{1}}^{2}ds \leq \frac{T^{2}\widetilde{a}_{T}}{a_{0}} 	{\bf S}_m^\delta (t). \label{pt13}
\end{align}	
{\bf Estimate $I_5$}.
Using Lemma \eqref{importantlemma}, we obtain the following estimate for $I_5$
	\begin{align}
	I_5 &\le 2 \int_0^t  \|  	s  \mathbb{\bf P}_{\beta_{\bf N(\delta)} }^\delta  ({\bf v}^\delta_{\bf N(\delta),m} (s)) \| \|s \frac{d}{ds}{\bf v}^\delta_{\bf N(\delta),m}(s)   \| ds \nn\\
	& \le \int_0^t  \|  	s  \mathbb{\bf P}_{\beta_{\bf N(\delta)} }^\delta  ({\bf v}^\delta_{\bf N(\delta),m} (s)) \|^2ds+ \int_0^t  \|s \frac{d}{ds}{\bf v}^\delta_{\bf N(\delta),m}(s)   \|^2 ds \nn\\
	&\le  \ln^2\lf(\frac{1}{\beta_{\bf N(\delta)} }\rt) \int_0^t  \| {\bf v}^\delta_{\bf N(\delta),m}(s)   \|^2 ds+ \int_0^t  \|s \frac{d}{ds}{\bf v}^\delta_{\bf N(\delta),m}(s)   \|^2 ds\nn\\
	&\le \ln^2\lf(\frac{1}{\beta_{\bf N(\delta)} }\rt)  \frac{	{\bf S}_m^\delta (t) }{a_0}+ \int_0^t  \|s \frac{d}{ds}{\bf v}^\delta_{\bf N(\delta),m}(s)   \|^2 ds
	\end{align}
Combining \eqref{pt10}, \eqref{pt11}, \eqref{pt12}, \eqref{pt13},we obtain
\begin{align}
2 \int_0^t \left\Vert s \frac{d}{ds}{\bf v}^\delta_{{\bf N(\delta)},m}(s) \right\Vert_{L^2(\Omega)}^2ds &+ b_{0}\left\Vert t {\bf v}^\delta_{{\bf N(\delta)},m}(t) \right\Vert
_{H^{1}}^{2} \nn\\
 &\le B_2(\delta,T)+  \frac{2Tb_1}{2b_0} 	{\bf S}_m^\delta (t)+\frac{T^{2}\widetilde{a}_{T}}{a_{0}} 	{\bf S}_m^\delta (t) \nn\\
&+\ln^2\lf(\frac{1}{\beta_{\bf N(\delta)} }\rt)  \frac{	{\bf S}_m^\delta (t)}{a_0}+ \int_0^t  \|s \frac{d}{ds}{\bf v}^\delta_{\bf N(\delta),m}(s)   \|^2 ds. \label{pt14}
\end{align}
 Let us set
 \[
 {\bf R}_m^\delta (t)= \int_0^t \left\Vert s \frac{d}{ds}{\bf v}^\delta_{{\bf N(\delta)},m}(s) \right\Vert_{L^2(\Omega)}^2ds +  \left\Vert t {\bf v}^\delta_{{\bf N(\delta)},m}(t) \right\Vert
 _{H^{1}(\Omega) }^{2}.
 \]
then since $$\int_0^t  {\bf R}_m^\delta (s)ds \ge  \int_0^t  \|s \frac{d}{ds}{\bf v}^\delta_{\bf N(\delta),m}(s)   \|^2 ds $$ together with \eqref{pt14}, we deduce that
\begin{align}
 {\bf R}_m^\delta (t) \le \frac{ B(3,\delta) } {\min (2, b_0)}+\frac{1}{\min (2, b_0)}  \int_0^t  {\bf R}_m^\delta (s)ds.
\end{align}
where
$$B(3,\delta) = B_2(\delta,T)+  \frac{2Tb_1}{2b_0} B(2,\delta) +\frac{T^{2}\widetilde{a}_{T}}{a_{0}} 	B(2,\delta) +\ln^2\lf(\frac{1}{\beta_{\bf N(\delta)} }\rt)  \frac{B(2,\delta)}{a_0}.
$$
 Applying Gronwall's inequlality, we obtain that
 \begin{equation}
 \int_0^t \left\Vert s \frac{d}{ds}{\bf v}^\delta_{{\bf N(\delta)},m}(s) \right\Vert_{L^2(\Omega)}^2ds +  \left\Vert t {\bf v}^\delta_{{\bf N(\delta)},m}(t) \right\Vert
 _{H^{1}(\Omega) }^{2} \le B_4(\delta,T), \label{eq3}
 \end{equation}
 where $B(4,\delta) $ depends only on $\delta, T$ and does not depend on $m$. \\
{\bf Step 3}. {\it The limiting process}.
 	
 	Combining \eqref{eq1}, \eqref{eq2} and \eqref{eq3},  we deduce that, there exists a
 	subsequence of $ \{ {\bf v}^\delta_{{\bf N(\delta)},m}  \}$ still denoted by $ \{ {\bf v}^\delta_{{\bf N(\delta)},m}  \}$  such that (see \cite{Lions})
 	\begin{equation} \label{eq7}
 	\left\{
 	\begin{tabular}{lll}
 	${\bf v}^\delta_{{\bf N(\delta)},m} \rightarrow {\bf v}^\delta_{{\bf N(\delta)}}$ & $\text{in}$ & $L^{\infty }(0,T;L^{2})\text{ \ weak*,}$%
 	\medskip \\
 	$ {\bf v}^\delta_{{\bf N(\delta)},m} \rightarrow {\bf v}^\delta_{{\bf N(\delta)}} $ & $\text{in}$ & $L^{2}(0,T;H^{1})\text{ \ weak,}$%
 	\medskip \\
 	$t {\bf v}^\delta_{{\bf N(\delta)},m} \rightarrow t{\bf v}^\delta_{{\bf N(\delta)}}  $ & $\text{in}$ & $L^{\infty }(0,T;H^{1})\text{ \ weak*,%
 	}$\medskip \\
 	$\left(t {\bf v}^\delta_{{\bf N(\delta)},m} \right)^{\prime }\rightarrow   \left(t {\bf v}^\delta_{{\bf N(\delta)}} \right)^{\prime } $ & $\text{in}$ & $L^{2}(Q_{T})%
 	\text{ \ weak,}$\medskip \\
 	${\bf v}^\delta_{{\bf N(\delta)},m} \rightarrow {\bf v}^\delta_{{\bf N(\delta)}}$ & $\text{in}$ & $L^{p}(Q_{T})\text{ \ weak.}$%
 	\end{tabular}%
 	\right.
 	\end{equation}

 	Using a compactness lemma (\cite{Lions}, Lions, p. 57) applied to $\eqref{eq7}$, we can extract from the sequence  $ \{ {\bf v}^\delta_{{\bf N(\delta)},m}  \}$  a subsequence still
 	denoted by  $ \{ {\bf v}^\delta_{{\bf N(\delta)},m}  \}$ such that%
 	\begin{equation}
 	\begin{tabular}{l}
 	$\left(t {\bf v}^\delta_{{\bf N(\delta)},m} \right)^{\prime }\rightarrow   \left(t {\bf v}^\delta_{{\bf N(\delta)}} \right)^{\prime }  \text{\ \ strongly in }L^{2}(Q_{T}).$%
 	\end{tabular}
 	\end{equation}
 	By the Riesz-Fischer theorem, we can extract from  $ \{ {\bf v}^\delta_{{\bf N(\delta)},m}  \}$ a subsequence
 	still denoted by  $ \{ {\bf v}^\delta_{{\bf N(\delta)},m}  \}$  such that%
 	\begin{equation}
 	\begin{tabular}{l}
 	${\bf v}^\delta_{{\bf N(\delta)},m}(x,t) \rightarrow {\bf v}^\delta_{{\bf N(\delta)}}(x,t)\text{ \ a.e. \ }(x,t)\ \ \text{in \ }%
 	Q_{T}=\Omega \times (0,T).$%
 	\end{tabular}
 	\tag{3.40}  \label{c40}
 	\end{equation}
 	Because $F$ is continuous, then%
 	\begin{equation}
 	F \left( x,t, {\bf v}^\delta_{{\bf N(\delta)},m}(x,t) \right)\rightarrow F \left( x,t, {\bf v}^\delta_{{\bf N(\delta)}}(x,t) \right) \text{ \ a.e. \ }(x,t)\ \ \text{in \ }%
 	Q_{T}=\Omega  \times (0,T).  \label{eq4}
 	\end{equation}
 	On the other hand, using \eqref{ass2},  \eqref{eq1}, \eqref{eq2} , we obtain

 	\begin{equation}
 	\left\Vert  	F \left(  {\bf v}^\delta_{{\bf N(\delta)},m}(x,t) \right)   \right \Vert_{L^{p^{\prime }}(Q_{T})}  \leq B_{5} (\delta,T) , \label{eq5}
 	\end{equation}
 	where $B_{5} (\delta,T)  $ is a constant independent of $m.$
 	We shall now require the following lemma, the proof of which can be found in
 	\cite{Lions} (see Lemma 1.3).

 	\begin{lemma} \label{lemmalions}
 		Let $Q$ be a bounded open subset of $\mathbb{R}
 		^{N}$ and  $G_{m},$ $G\in L^{q}(Q),$ $1<q<\infty ,$ such that
 		\begin{equation}
 		\left\Vert G_{m}\right\Vert _{L^{q}(Q)}\leq C,\text{ \textit{where} }C\text{
 			\textit{is a constant independent of}\ }m
 		\end{equation}
 		and
 			\begin{equation*}
 		G_{m}\rightarrow G\text{ \ a.e. \ }(x,t)\ \ \text{in \ }Q.
 			\end{equation*}
 			Then
 			\begin{equation*}
 			G_{m}\rightarrow G  ~\textit{in }~~~~ L^{q}(Q)\textit{weakly.}
 			\end{equation*}
 	\end{lemma}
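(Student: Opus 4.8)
The plan is to reduce the desired weak $L^q$ convergence to strong $L^1$ convergence, which the almost-everywhere hypothesis \emph{does} deliver once it is combined with the uniform $L^q$-bound. The essential observation is that $\|G_m\|_{L^q(Q)} \le C$, together with the finiteness of $|Q|$, forces the family $\{G_m\}$ to be uniformly integrable: writing $q'$ for the conjugate exponent ($\frac 1q + \frac 1{q'} = 1$, so $1 < q' < \infty$), Hölder's inequality gives for any measurable $A \subset Q$
\[
\int_A |G_m|\, dx \le \|G_m\|_{L^q(Q)} \, |A|^{1/q'} \le C\, |A|^{1/q'},
\]
whence $\sup_m \int_A |G_m|\,dx \to 0$ as $|A| \to 0$. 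Here the hypothesis $1<q$ is indispensable: it is precisely what makes $|A|^{1/q'}$ vanish. First I would also record that $G \in L^q(Q)$ comes for free: applying Fatou's lemma to $|G_m|^q \to |G|^q$ a.e.\ yields $\|G\|_{L^q(Q)} \le \liminf_m \|G_m\|_{L^q(Q)} \le C$, so in particular $\|G_m - G\|_{L^q(Q)} \le 2C$ for all $m$.

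With uniform integrability secured, Vitali's convergence theorem applies (since $|Q| < \infty$ and $G_m \to G$ a.e.), yielding the strong convergence
\[
G_m \longrightarrow G \quad \text{in } L^1(Q).
\]
If one prefers to avoid invoking Vitali directly, the identical conclusion follows from Egorov's theorem: split $Q$ into a set on which the convergence is uniform and a remainder of arbitrarily small measure, and control the integral over the remainder by the uniform-integrability estimate above.

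It then remains to test against an arbitrary $\phi \in L^{q'}(Q)$ and show $\int_Q (G_m - G)\phi \, dx \to 0$. For this I would use a truncation argument. Fix $\eta > 0$ and set $\phi_M := \phi \, \mathbf 1_{\{|\phi| \le M\}}$; since $|\phi - \phi_M|^{q'} \le |\phi|^{q'} \in L^1(Q)$ and $\phi - \phi_M \to 0$ a.e.\ as $M \to \infty$, dominated convergence gives $\|\phi - \phi_M\|_{L^{q'}(Q)} \to 0$. Splitting,
\[
\Big| \int_Q (G_m - G)\phi \, dx \Big| \le \Big| \int_Q (G_m - G)\phi_M \, dx \Big| + \|G_m - G\|_{L^q(Q)} \, \|\phi - \phi_M\|_{L^{q'}(Q)},
\]
the second term is $\le 2C \|\phi - \phi_M\|_{L^{q'}(Q)} < \eta$ once $M$ is large, uniformly in $m$; and for that fixed $M$ the first term is $\le M \|G_m - G\|_{L^1(Q)} \to 0$ by the strong $L^1$ convergence just established. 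Hence $\limsup_m |\int_Q (G_m - G)\phi \, dx| \le \eta$ for every $\eta > 0$, which forces the limit to be $0$. Since $\phi \in L^{q'}(Q)$ was arbitrary, this is exactly $G_m \rightharpoonup G$ weakly in $L^q(Q)$.

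The main obstacle to anticipate is the very first bridge: almost-everywhere convergence by itself carries no information about convergence of integrals, because mass can concentrate and escape (for instance $m\,\mathbf 1_{\{0 \le x \le 1/m\}}$ on a unit interval tends to $0$ a.e.\ yet keeps integral $1$). The entire argument therefore hinges on converting the qualitative $L^q$-bound into the quantitative uniform-integrability estimate above, which is exactly what rules out such concentration for $q>1$. Once that estimate is in place, everything downstream (Vitali, then truncation) is routine.
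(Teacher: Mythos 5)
Your proof is correct. Note, however, that the paper does not actually prove this lemma: it simply cites Lemma~1.3 of Lions' book \emph{Quelques m\'ethodes de r\'esolution des probl\`emes aux limites non lin\'eaires}, so the relevant comparison is with Lions' classical argument. That argument tests directly against a fixed $\phi \in L^{q'}(Q)$ and splits $Q$ via Egorov's theorem into a set $E$ on which $G_m \to G$ uniformly and a remainder $Q \setminus E$ of small measure; the integral over $E$ tends to zero by uniform convergence (using $|Q|<\infty$), while the integral over $Q\setminus E$ is controlled by H\"older together with the absolute continuity of $\int |\phi|^{q'}$, i.e.\ the smallness is pushed onto the test function. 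You instead push the work onto the sequence itself: the uniform $L^q$ bound with $q>1$ gives uniform integrability, Vitali upgrades the a.e.\ convergence to strong $L^1(Q)$ convergence, and then a truncation $\phi_M = \phi\,\mathbf 1_{\{|\phi|\le M\}}$ of the test function finishes the job. Both proofs rest on exactly the same two ingredients (the a.e.\ convergence and the equi-boundedness in $L^q$, $q>1$), and each step of yours checks out, including the Fatou bound $\|G\|_{L^q}\le C$ needed for $\|G_m-G\|_{L^q}\le 2C$. What your route buys is a strictly stronger intermediate conclusion (strong convergence in $L^1(Q)$, not just weak in $L^q$) and a self-contained argument; what Lions' route buys is brevity, avoiding the detour through $L^1$ entirely. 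Your closing remark that Egorov could replace Vitali shows you see the bridge between the two arguments.
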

 	Applying Lemma \eqref{lemmalions}  with  $q=p^{\prime }=\frac{p}{p-1}, \ G_{m}=F \left(  {\bf v}^\delta_{{\bf N(\delta)},m}(x,t) \right)  ,$ $G=F \left(  {\bf v}^\delta_{{\bf N(\delta)}}(x,t) \right)  ,$
 we deduce from  \eqref{eq4} and \eqref{eq5} that
 	
 	\begin{equation}   \label{eq8}
 	F \left(  {\bf v}^\delta_{{\bf N(\delta)},m} \right) \rightarrow F \left(  {\bf v}^\delta_{{\bf N(\delta)}} \right)  ~\textit{in }~~~~ L^{p'}(Q)~~~~\textit{weakly.}
 	\end{equation}
 	Passing to the limit in \eqref{pt1} and \eqref{pt00}  by  \eqref{eq7} and \eqref{eq8},
 	we have established equation \eqref{pt11}.

\subsubsection{Proof of the uniqueness of  solution of Problem \eqref{QR3} }
Assume that the Problem \eqref{QR3} has two solution
${\bf v}^\delta_{\bf N(\delta)}$ and ${\bf w}^\delta_{\bf N(\delta)}$. We have to show that ${\bf v}^\delta_{\bf N(\delta)} = {\bf w}^\delta_{\bf N(\delta)}$.
 We recall that
\bq
\left\{ \begin{gathered}
\frac{\partial  {\bf v}^\delta_{\bf N(\delta)} }{\partial t} + \nabla \Big(	{\bf b}_\delta^{\text{obs}}(x,t)\nabla   {\bf v}^\delta_{\bf N(\delta)}  \Big)= F \lf(x,t,{\bf v}^\delta_{\bf N(\delta)} (x,t)\right)
+ \mathbb{\bf P}_{\beta_{\bf N(\delta)} }^\delta  {\bf v}^\delta_{\bf N(\delta)}, \hfill \\
\frac{\partial  {\bf w}^\delta_{\bf N(\delta)} }{\partial t} + \nabla \Big(	{\bf b}_\delta^{\text{obs}}(x,t)\nabla   {\bf w}^\delta_{\bf N(\delta)}  \Big)= F \lf(x,t,{\bf w}^\delta_{\bf N(\delta)} (x,t)\right)
+ \mathbb{\bf P}_{\beta_{\bf N(\delta)} }^\delta  {\bf w}^\delta_{\bf N(\delta)} \hfill\\
{\bf u}^\delta_{\bf N(\delta)} (x,T)={\bf w}^\delta_{\bf N(\delta)}=\overline  G_{\delta, {\bf N(\delta)}}(x),\hfill  \label{system1}
\end{gathered}  \right.
\eq
	For $\overline R_\delta>0$, we put $$ \mathbf{W}^\delta_{\bf N(\delta)} (x,t)=e^{\overline R_\delta (t-T)}\lf[ {\bf v}^\delta_{\bf N(\delta)}(x,t)- {\bf w}^\delta_{\bf N(\delta)}(x,t)\rt].$$ Then for $ (x,t) \in \Omega\times (0,T)$, we get
		\begin{align} \label{eq13}
		\frac{\partial \mathbf{W}^\delta_{\bf N(\delta)}}{\partial t} &+ \nabla \Big({\bf b}_\delta^{\text{obs}}(x,t)\nabla  \mathbf{W}^\delta_{\bf N(\delta)}\Big)-\overline R_\delta \mathbf{W}^\delta_{\bf N(\delta)} \nn\\
		&=
		\mathbb{\bf P}_{\beta_{\bf N(\delta)} }^\delta   \mathbf{W}^\delta_{\bf N(\delta)} + e^{\overline R_\delta(t-T)}\lf[F \lf(x,t,{\bf v}^\delta_{\bf N(\delta)} (x,t)\right)-F \lf(x,t,{\bf w}^\delta_{\bf N(\delta)} (x,t)\right) \rt],
		\end{align}
		and $$\mathbf{W}^\delta_{\bf N(\delta)}|_{\partial \Omega}=0, ~ \mathbf{W}^\delta_{\bf N(\delta)}(x,T)=0 .$$
	By taking the inner product of the two sides of \eqref{eq13} with $\mathbf{W}^\delta_{\bf N(\delta)}$ then taking the integral from $t$ to $T$ and noting the  equality $$\int_\Omega \nabla \Big(	{\bf b}_\delta^{\text{obs}}(x,t)\nabla  \mathbf{W}^\delta_{\bf N(\delta)} \Big) \mathbf{V}^\delta_{\bf N(\delta)} dx=-\int_\Omega {\bf b}_\delta^{\text{obs}} (x,t) |\nabla  \mathbf{W}^\delta_{\bf N(\delta)} |^2 dx,$$ we deduce
\begin{align}
&\|\mathbf{W}^\delta_{\bf N(\delta)} (.,T)\|^2_{L^2(\Omega)} - \|\mathbf{W}^\delta_{\bf N(\delta)} (.,t)\|^2_{L^2(\Omega)} \nn\\
&= 2 \int_t^T \int_\Omega 	\mathbb{\bf P}_{\beta_{\bf N(\delta)} }^\delta   \mathbf{W}^\delta_{\bf N(\delta)}(x,s)dxds+2 \int_t^T \int_{\Omega}  {\bf b}_\delta^{\text{obs}} (x,s) |\nabla  \mathbf{W}^\delta_{\bf N(\delta)} |^2 dxds + 2 \overline R_\delta \int_t^T \|\mathbf{W}^\delta_{\bf N(\delta)} (.,s)\|^2_{L^2(\Omega)}ds \nn\\
& +2 \int_t^T \int_\Omega e^{\overline R_\delta(t-T)}\lf[F \lf(x,s,{\bf v}^\delta_{\bf N(\delta)} (x,s)\right)-F \lf(x,s,{\bf w}^\delta_{\bf N(\delta)} (x,s)\right) \rt] \mathbf{W}^\delta_{\bf N(\delta)}(x,s)dxds\nn\\
&\ge 2 \int_t^T \int_\Omega 	\mathbb{\bf P}_{\beta_{\bf N(\delta)} }^\delta   \mathbf{W}^\delta_{\bf N(\delta)}(x,s)dxds+ 2 \overline R_\delta \int_t^T \|\mathbf{W}^\delta_{\bf N(\delta)} (.,s)\|^2_{L^2(\Omega)}ds\nn\\
& +2 \int_t^T \int_\Omega e^{\overline R_\delta(t-T)}\lf[F \lf(x,s,{\bf v}^\delta_{\bf N(\delta)} (x,s)\right)-F \lf(x,s,{\bf w}^\delta_{\bf N(\delta)} (x,s)\right) \rt] \mathbf{W}^\delta_{\bf N(\delta)}(x,s)dxds \label{eq14}
\end{align}
By  the   assumption we have
\begin{align}
&\int_t^T \int_\Omega e^{\overline R_\delta(s-T)}\lf[F \lf(x,s,{\bf v}^\delta_{\bf N(\delta)} (x,s)\right)-F \lf(x,s,{\bf w}^\delta_{\bf N(\delta)} (x,s)\right) \rt] \mathbf{W}^\delta_{\bf N(\delta)}(x,s)dxds\nn\\
&= \int_t^T \int_\Omega e^{\overline R_\delta(s-T)}\lf[F \lf(x,s,{\bf v}^\delta_{\bf N(\delta)} (x,s)\right)-F \lf(x,s,{\bf w}^\delta_{\bf N(\delta)} (x,s)\right) \rt] e^{\overline R_\delta (s-T)}\lf[ {\bf v}^\delta_{\bf N(\delta)}(x,s)- {\bf w}^\delta_{\bf N(\delta)}(x,s)\rt] dxds \nn\\
&\ge -\overline \gamma \int_t^T \int_\Omega e^{2\overline R_\delta(s-T)} \lf[ {\bf v}^\delta_{\bf N(\delta)}(x,s)- {\bf w}^\delta_{\bf N(\delta)}(x,s)\rt]^2 dxds\nn\\
&= -\overline \gamma \int_t^T \|\mathbf{W}^\delta_{\bf N(\delta)} (.,s)\|^2_{L^2(\Omega)}ds. \label{eq15}
\end{align}
Using the  inequality \eqref{bound P},  we get the following estimate
\begin{equation}
 \int_t^T \int_\Omega 	\mathbb{\bf P}_{\beta_{\bf N(\delta)} }^\delta   \mathbf{W}^\delta_{\bf N(\delta)}(x,s)dxds \ge -  \frac{2}{T} \ln\lf(\frac{1}{\beta_{\bf N_\delta}}\rt) \int_t^T \|\mathbf{W}^\delta_{\bf N(\delta)} (.,s)\|^2_{L^2(\Omega)}ds. \label{eq16}
\end{equation}
Combine equations   \eqref{eq14}, \eqref{eq15}, \eqref{eq16}   and choose
\[
\overline R_\delta=  \frac{1}{T} \ln\lf(\frac{1}{\beta_{\bf N_\delta}}\rt)+\gamma
\]	
	to obtain
	$$
	\|\mathbf{W}^\delta_{\bf N(\delta)} (.,T)\|^2_{L^2(\Omega)} - \|\mathbf{W}^\delta_{\bf N(\delta)} (.,t)\|^2_{L^2(\Omega)}  \ge 0
	$$
	This implies that for all $t \in [0,T]$ then $\|\mathbf{W}^\delta_{\bf N(\delta)} (.,t)\|^2_{L^2(\Omega)} =0$ since $\mathbf{W}^\delta_{\bf N(\delta)} (x,T)=0$. The proof is completed.
	
	\subsubsection{Convergence estimate}
	Our analysis and proof is short and similar to the   proof of Theorem \eqref{nonlocal1}.
	Indeed, let us also set 	 $$ \mathbf{V}^\delta_{\bf N(\delta)} (x,t)=e^{\rho_\delta (t-T)}\lf[ {\bf u}^\delta_{\bf N(\delta)}(x,t)-\textbf{u}(x,t)\rt].$$
	By using  some of steps as above, we obtain
	\begin{align}
	&\|\mathbf{V}^\delta_{\bf N(\delta)}(\cdot, T)\|^2_{L^2(\Omega)} - \|\mathbf{V}^\delta_{\bf N(\delta)}(\cdot, t)\|^2_{L^2(\Omega)} \nn\\
	&~~~~~~~~~= \widetilde A_4+ \widetilde A_5+ \widetilde A_6+ \underbrace{2\int_t^T \lf\langle e^{\rho_\delta(s-T)}\lf[F \lf(x,s,{\bf u}^\delta_{\bf N(\delta)} (x,s)\right)- F\big(x,s;\textbf{u}(x,s)\big)\rt], \mathbf{V}^\delta_{\bf N(\delta)} \rt\rangle_{L^2(\Omega)}ds}_{=:\widetilde{A_8}}
	\end{align}
	The terms $\widetilde A_4, \widetilde A_5, \widetilde A_6$ is similar to \eqref{3J}.  Now, we consider $\widetilde A_8$.
	By    assumption \eqref{ass3},  we have
	\begin{align}
	&\int_t^T \int_\Omega e^{\overline R_\delta(s-T)}\lf[F \lf(x,s,{\bf u}^\delta_{\bf N(\delta)} (x,s)\right)-F \lf(x,s,{\bf u} (x,s)\right) \rt] \mathbf{V}^\delta_{\bf N(\delta)}(x,s)dxds\nn\\
	&= \int_t^T \int_\Omega e^{\overline R_\delta(s-T)} \lf[F \lf(x,s,{\bf u}^\delta_{\bf N(\delta)} (x,s)\right)-F \lf(x,s,{\bf u} (x,s)\right) \rt]  e^{\overline R_\delta (s-T)}\lf[ {\bf u}^\delta_{\bf N(\delta)}(x,s)- {\bf u }(x,s)\rt] dxds \nn\\
	&\ge -\overline \gamma \int_t^T \int_\Omega e^{2\overline R_\delta(s-T)} \lf[ {\bf u}^\delta_{\bf N(\delta)}(x,s)- {\bf u }(x,s)\rt]^2  dxds\nn\\
	&= -\overline \gamma \int_t^T \|\mathbf{V}^\delta_{\bf N(\delta)} (.,s)\|^2_{L^2(\Omega)}ds. \label{eq15}
	\end{align}
		After using the results of  the proof of Theorem \ref{nonlocal1}, we get
		\begin{align}
		{\bf E} \|\mathbf{V}^\delta_{\bf N(\delta)}(\cdot, t)\|^2_{L^2(\Omega)} & \leq {\bf E} \| \overline  G_{\delta, {\bf N(\delta)}}(x)-g(x) \|^2_{L^2(\Omega)}\nn\\
		&+  \beta_{\bf N_\delta}  \|\textbf{u}\|_{C\lf([0,T];\mathcal W_{MT}(\Omega)\rt)}^2 +  \frac{\delta^2 T^3 }{b_0} \lf\| \textbf{u}  \rt\|_{L^\infty(0,T;\mathcal H_0^1(\Omega))}^2 \nn\\
		&\quad + 	{\bf E}  \int_t^T \lf(-2\rho_\delta+ \frac{2}{T} \ln\lf(\frac{1}{\beta_{\bf N_\delta}}\rt) + 2 \overline \gamma+1 \rt)\|\mathbf{V}^\delta_{\bf N(\delta)} (\cdot,s) \|^2_{L^2(\Omega)} ds.
		\end{align}
		Since $$\mathbf{V}^\delta_{\bf N(\delta)}(x,t)=e^{\rho_\delta(t-T)} \Big( {\bf u}^\delta_{\bf N(\delta)}(x,t)-\textbf{u}(x,t)\Big) $$  and applying Lemma \ref{lemmawhitenoise},  we observe that
		\begin{align}
		&e^{2\rho_\delta(t-T)} {\bf E} \lf\|{\bf u}^\delta_{\bf N(\delta)}(x,t)-\textbf{u}(x,t) \rt\|^2_{L^2(\Omega)}
		\leq \delta^2 {\bf N}(\delta)  + \frac{1}{\la_{\bf N(\delta)}^{2\gamma}} \|g\|_{H^{2\gamma}(\Omega)}   \nn\\
		&~~~~~~~~~~~~~~~~~~~~~~~~~~~~~~~~+ \beta_{\bf N_\delta}  \|\textbf{u}\|_{C\lf([0,T];\mathcal W_{MT}(\Omega)\rt)}^2 +  \frac{\delta^2 T^3 }{b_0} \lf\| \textbf{u}  \rt\|_{L^\infty(0,T;\mathcal H_0^1(\Omega))}^2
		\nn\\
		&~~~~~~~~~~~~~~~~~~~~~~~~~~~~~~~~+ \lf( 2 \overline \gamma +1 \rt) \int_t^T  	e^{2\rho_\delta(s-T)} {\bf E} \lf\|{\bf u}^\delta_{\bf N(\delta)}(x,s)-\textbf{u}(x,s) \rt\|^2_{L^2(\Omega)}  ds. \label{a}
		\end{align}
		Gronwall's lemma allows us to obtain	\begin{align}  \label{mainerror}
		&e^{2\rho_\delta(t-T)} {\bf E} \lf\|{\bf u}^\delta_{\bf N(\delta)}(x,t)-\textbf{u}(x,t) \rt\|^2_{L^2(\Omega)} \nn\\
		&\leq \underbrace{\left[\delta^2 {\bf N}(\delta)  + \frac{1}{\la_{\bf N(\delta)}^{2\gamma}} \|g\|_{H^{2\gamma}(\Omega)}
		+ \beta_{\bf N_\delta}  \|\textbf{u}\|_{C\lf([0,T];\mathcal W_{MT}(\Omega)\rt)}^2 +  \frac{\delta^2 T^3 }{b_0} \lf\| \textbf{u}  \rt\|_{L^\infty(0,T;\mathcal H_0^1(\Omega))}^2  \right]}_{ \widetilde C (\delta) } e^{( 2  \overline \gamma +1 )(T-t)}.
		\end{align}
		By choosing $\rho_\delta= \frac{1}{T} \ln\lf(\frac{1}{\beta_{\bf N_\delta}}\rt) >0$ we have
		\begin{align} \label{resulproof-a-F_lip}
		{\bf E} \lf\|{\bf u}^\delta_{\bf N(\delta)}(x,t)-\textbf{u}(x,t) \rt\|^2_{L^2(\Omega)}
		\leq \beta_{\bf N(\delta)}^{\frac{2t}{T}} e^{( 2 \overline \gamma +1 )T} 	\widetilde C (\delta).
		\end{align}
		\section{Application to some specific equations}\label{section7}
		\subsection{Ginzburg-Landau equation}
		Here  we consider a special source function $F
		(u)= u-u^3$ for  Problem \eqref{parabolicproblemwhitenoise}. This is  called   Ginzburg-Landau equation. This function satisfies the condition of section \ref{section4} and  does not satisfy that the condition in section \ref{section5}.   For all $\mathcal R>0$, we approximate $F$ by $\mathcal{F}_{\mathcal R}$ defined by
		\begin{align} \label{df-F-2}
		\mathcal{F}_{\mathcal R}(x,t;w) :=
		\begin{cases}
		\mathcal R^3 - \mathcal R, & w \in (-\infty,-\mathcal R)\\
u-u^3, & w \in [-\mathcal R, \mathcal R],\\
\mathcal R- \mathcal R^3, & w \in (\mathcal R,+\infty).
		\end{cases}
		\end{align}
	We consider the problem 			
		\bq  \label{QR5}
		\left\{ \begin{gathered}
		\frac {\partial  {\bf u}^{\delta}_{\bf N(\delta)} }{\partial t}-\nabla\Big(	{\bf a}_\delta^{\text{obs}}(x,t)\nabla {\bf u}^\delta_{\bf N(\delta)}  \Big)-  \mathbb{\bf Q}_{\beta_{\bf N(\delta)} }^\delta  ({\bf u}^\delta_{\bf N(\delta)} ) (x,t)  \hfill \\
		~~~~~~~~~~~~~~~~~~~~~~~~~~~~~~~~~~~~~=\mathcal{F}_{R_\delta} \lf({\bf u}^\delta_{\bf N(\delta)}  (x,t)\rt),\quad (x,t) \in \Omega \times (0,T), \hfill \\
		{\bf u}^\delta_{\bf N(\delta)} |_{\partial \Omega }=0,\quad t \in (0,T),\hfill\\
		{\bf u}^\delta_{\bf N}  (x,T)=\overline  G_{\delta, {\bf N(\delta)}}(x), \quad (x,t) \in \Omega \times (0,T),\hfill
		\end{gathered}  \right.
		\eq
			It is easy to see that $K
			(\mathcal R_\delta)= 1+3 \mathcal R_\delta^2$.
			 Let us choose
			 $\beta_{\bf N(\delta)}= {\bf N(\delta)}^{-c}$ for any $0<c<  \min(\frac{1}{2}, \frac{2\gamma}{d})$. And ${\bf N(\delta)}$ is chosen as follows
			 \begin{equation}
			 {\bf N(\delta)}= \left(  \frac{1}{\delta} \right)^{m(\frac{1}{2}-c)},\beta_{\bf N(\delta)} = \left(  \frac{1}{\delta} \right)^{-mc(\frac{1}{2}-c)} ~~0<m<1.
			 \end{equation}
			 and
			  choose $\mathcal R_\delta$ such that
			 \[
		{\mathcal R}_\delta=\sqrt { \frac{K\left( 	{\mathcal R}_\delta \right) -1 }{3} }= 		\sqrt { \frac{	\frac{1}{kT}  \ln \Big(  m(\frac{1}{2}-c  ) \ln \left(  \frac{1}{\delta}	\right) \Big) -1 }{3} }  .
			 \]
		Then applying Theorem \eqref{nonlocal1}, the error $	{\bf E} \lf\|{\bf u}^\delta_{\bf N(\delta)}(x,t)-\textbf{u}(x,t) \rt\|^2_{L^2(\Omega)}   $ is of order
		\[  \ln^2 \left(  \frac{1}{\delta} \right)
		\Big(  \delta \Big)^{2mc(\frac{1}{2}-c)\frac{t}{T} }.
		\]	
		
		\subsection{The nonlinear
			Fisher--KPP equation}
		In this subsection, we are  concerned with  the backward problem for
		a nonlinear parabolic equation of the Fisher--Kolmogorov--Petrovsky--Piskunov type in the following
		\begin{equation}
		\mathbf u_t-\nabla\Big(a(x,t)\nabla \mathbf u\Big)= \gamma(x)  {\mathbf u}^2- \mu(x) {\mathbf u} ,\quad (x,t) \in \Omega \times (0,T), \label{Fisher1}
		\end{equation}
		with the following condition
		\bq
		\left\{ \begin{gathered}
		\mathbf u(x,T)=g(x),\quad  (x,t) \in \Omega \times (0,T),\hfill\\
		\mathbf u|_{\partial \Omega}=0,\quad  t \in (0,T),\hfill
		  \label{Fisher2}
		\end{gathered}  \right.
		\eq
		By  Skellam \cite{Skellam}, the equation \eqref{Fisher1} has many applications in  population dynamics
		and periodic environments.  In these
		references, the quantity ${\bf u} (x,t)$ generally stands for a population density, and the coefficients $a(x,t)$,~$\gamma(x),~\mu(x)$
		 respectively, correspond to the diffusion coefficient, the intrinsic
		growth rate coefficient
		and a coefficient measuring the effects of competition on the birth and death rates.
		\subsection{The second equation}
		Taking the function $F(u) = u^{\frac{1}{3}}$. It is easy to check that $F$ satisfy \eqref{ass1}, \eqref{ass2} and \eqref{ass3}. Moreover, we can show that $F$ is not locally Lipschitz function. So, we can not  regularize problem in this case by Problem \eqref{QR2}.
			We consider the problem 			
			\bq  \label{QR6}
			\left\{ \begin{gathered}
			\frac {\partial  {\bf u}^{\delta}_{\bf N(\delta)} }{\partial t}-\nabla\Big(	{\bf a}_\delta^{\text{obs}}(x,t)\nabla {\bf u}^\delta_{\bf N(\delta)}  \Big)-  \mathbb{\bf Q}_{\beta_{\bf N(\delta)} }^\delta  ({\bf u}^\delta_{\bf N(\delta)} ) (x,t)  \hfill \\
			~~~~~~~~~~~~~~~~~~~~~~~~~~~~~~~~~~~~~= \lf({\bf u}^\delta_{\bf N(\delta)}  (x,t)\rt)^{\frac{1}{3}},\quad (x,t) \in \Omega \times (0,T), \hfill \\
			{\bf u}^\delta_{\bf N(\delta)} |_{\partial \Omega }=0,\quad t \in (0,T),\hfill\\
			{\bf u}^\delta_{\bf N}  (x,T)=\overline  G_{\delta, {\bf N(\delta)}}(x), \quad (x,t) \in \Omega \times (0,T),\hfill
			\end{gathered}  \right.
			\eq
		Let us choose $\beta_{\bf N_\delta}$ and ${\bf N_\delta}$ be as in subsection 6.1.
	Applying Theorem \ref{nonlocal1}, the error between the solution of Problem \eqref{QR6} and $\bf u$, $	{\bf E} \lf\|{\bf u}^\delta_{\bf N(\delta)}(x,t)-\textbf{u}(x,t) \rt\|^2_{L^2(\Omega)}   $, is of order
		$
		  \delta ^{2mc(\frac{1}{2}-c)\frac{t}{T} }.
		$

\end{document}